\numberwithin{equation}{section}
\theoremstyle{plain}
\newtheorem{main theorem}{Main Theorem}
\newtheorem{theorem}{Theorem}[section]
\newtheorem{lemma}[theorem]{Lemma}
\newtheorem{corollary}[theorem]{Corollary}
\newtheorem{proposition}[theorem]{Proposition}
\newtheorem{claim}[theorem]{Claim}
\theoremstyle{definition}
\newtheorem{definition}[theorem]{Definition}
\newtheorem{remark}[theorem]{Remark}
\newtheorem{example}[theorem]{Example}
\newtheorem{condition}[theorem]{Condition}
\newtheorem{data}[theorem]{Data}
\newcommand{\mdim}{\mathrm{mdim}}
\newcommand{\diam}{\mathrm{diam}}
\newcommand{\widim}{\mathrm{Widim}}
\newcommand{\dist}{\mathrm{dist}}
\newcommand{\supp}{\mathrm{supp}}
\newcommand{\norm}[1]{\left|\!\left|#1\right|\!\right|}
\newcommand{\rdim}{\mathrm{rdim}}
\newcommand{\var}{\mathrm{var}}
\newcommand{\ver}{\mathrm{Ver}}
\begin{document}

\title[Double variational principle for mean dimension with potential]{Double variational principle for mean dimension with potential}

\author{Masaki Tsukamoto}

\subjclass[2010]{37A05, 37B99, 94A34}

\keywords{dynamical system, mean dimension, rate distortion dimension, variational principle, invariant measure, geometric measure theory}

\date{\today}

\thanks{I was partially supported by JSPS KAKENHI 18K03275.}

\maketitle

\begin{abstract}
This paper contributes to the mean dimension theory of dynamical systems.
We introduce a new concept called mean dimension with potential and develop a variational principle for it.
This is a mean dimension analogue of the theory of topological pressure.
We consider a minimax problem for the sum of rate distortion dimension and the integral of a potential function.
We prove that the minimax value is equal to the mean dimension with potential for a dynamical system having the marker property.
The basic idea of the proof is a dynamicalization of geometric measure theory.
\end{abstract}

\section{Introduction}  \label{section: introduction}

\subsection{Backgrounds}  \label{subsection: backgrounds}

This paper is a continuation of the project \cite{Lindenstrauss--Tsukamoto rate distortion, Lindenstrauss--Tsukamoto double VP}, which 
aims to inject ergodic-theoretic ideas into mean dimension theory by constructing new variational principles.
The purpose of the present paper is to introduce a new quantity called \textit{mean dimension with potential} and 
develop a variational principle for it.
This is a mean dimension analogue of the theory of topological pressure.

A pair $(\mathcal{X},T)$ is called a \textbf{dynamical system}
if $\mathcal{X}$ is a compact metrizable space and $T:\mathcal{X}\to \mathcal{X}$
is a homeomorphism.
Gromov \cite{Gromov} defined a topological invariant of dynamical systems called mean dimension 
(denoted by $\mdim(\mathcal{X},T)$), which estimates how many parameters per iterate we need to describe the orbits of 
the system $(\mathcal{X}, T)$.
Several applications and interesting relations with other subjects have been found over the last two decades
\cite{Lindenstrauss--Weiss, Lindenstrauss, Gutman Jaworski theorem, 
Matsuo--Tsukamoto, Gutman--Lindenstrauss--Tsukamoto, Li--Liang, Tsukamoto Brody, Meyerovitch--Tsukamoto, 
Gutman--Tsukamoto minimal, Gutman--Qiao--Tsukamoto}.
However, before our paper \cite{Lindenstrauss--Tsukamoto rate distortion} appeared, 
the theory of mean dimension lacked an important ingredient -- ergodic theory (in particular, invariant measures).
The paper \cite{Lindenstrauss--Tsukamoto rate distortion} discovered
a close relation between mean dimension and \textit{rate distortion theory}, which is a foundation 
of lossy data compression method.
This was further developed by \cite{Lindenstrauss--Tsukamoto double VP}.
They enable us to inject ergodic-theoretic concepts into mean dimension.

The following two theories are the main backgrounds of the present paper:

\begin{itemize}
    \item \textbf{Variational principle for topological pressure \cite{Walter}:}
    Let $(\mathcal{X}, T)$ be a dynamical system with a continuous function (called \textit{potential}) $\varphi:\mathcal{X}\to \mathbb{R}$.
    Then we can define the \textbf{topological pressure} $P(T,\varphi)$, which is a generalization of the 
    \textbf{topological entropy} $h_{\mathrm{top}}(T)$ in the sense that $h_{\mathrm{top}}(T) = P(T, 0)$.
    Let $\mathscr{M}^T(\mathcal{X})$ be the set of $T$-invariant Borel probability measures on $\mathcal{X}$.
    The variational principle states that \cite[\S 9.3]{Walter book}
    \[  P(T,\varphi) = \sup_{\mu\in \mathscr{M}^T(\mathcal{X})} \left(h_{\mu}(T) + \int_{\mathcal{X}} \varphi \, d\mu\right). \]
    Here $h_{\mu}(T)$ is the ergodic-theoretic entropy.
    When $\varphi =0$, this specializes to the variational principle for topological entropy 
    \cite{Goodwyn, Dinaburg, Goodman}:
    \begin{equation}  \label{eq: VP for entropy}
        h_{\mathrm{top}}(T) = \sup_{\mu\in \mathscr{M}^T(\mathcal{X})} h_\mu(T).
    \end{equation}    

    \item \textbf{Double variational principle for mean dimension \cite{Lindenstrauss--Tsukamoto double VP}:}
    Let $(\mathcal{X},T)$ be a dynamical system. We denote by $\mathscr{D}(\mathcal{X})$ the set of metrics (i.e. distance functions)
    on $\mathcal{X}$ compatible with the topology. Take a metric $\mathbf{d} \in \mathscr{D}(\mathcal{X})$ and an invariant probability measure
    $\mu\in \mathscr{M}^T(\mathcal{X})$.
    Let $X$ be the random variable taking values in $\mathcal{X}$ and obeying the distribution $\mu$.
    Consider the stochastic process $\{T^n X\}_{n\in \mathbb{Z}}$ and let $R(\mathbf{d}, \mu, \varepsilon)$, $\varepsilon >0$, be the 
    \textit{rate distortion function} of this process.
    This evaluates how many bits per iterate 
    we need to describe the process within the distortion (with respect to $\mathbf{d}$) bounded by $\varepsilon$.
    We will review the definition of $R(\mathbf{d},\mu,\varepsilon)$ in 
    \S \ref{subsection: rate distortion theory}.
    Following Kawabata--Dembo \cite{Kawabata--Dembo}, we introduce the \textbf{upper and lower rate distortion dimensions} 
    by\footnote{Throughout the paper we assume that the base of the logarithm is two.} 
    \begin{equation} \label{eq: rate distortion dimension}
      \overline{\rdim}(\mathcal{X}, T, \mathbf{d}, \mu) = \limsup_{\varepsilon\to 0} \frac{R(\mathbf{d},\mu,\varepsilon)}{\log(1/\varepsilon)}, 
      \quad 
      \underline{\rdim}(\mathcal{X},T,\mathbf{d},\mu) = \liminf_{\varepsilon \to 0} \frac{R(\mathbf{d},\mu,\varepsilon)}{\log(1/\varepsilon)}.         
    \end{equation}
    When the upper and lower limits coincide, we denote the common value by $\rdim(\mathcal{X},T,\mathbf{d},\mu)$.
    
    A dynamical system $(\mathcal{X},T)$ is said to have the \textbf{marker property} if for any $N>0$ there exists an open set 
    $U\subset \mathcal{X}$ satisfying 
    \[  U\cap T^{n} U = \emptyset \quad (1\leq n\leq N), \quad 
         \mathcal{X} = \bigcup_{n\in \mathbb{Z}} T^{n} U. \]
    For example, free minimal systems and their extensions satisfy this condition.
    
    The \textit{double variational principle} \cite[Theorem 1.1]{Lindenstrauss--Tsukamoto double VP} 
    states that if a dynamical system $(\mathcal{X}, T)$ has the marker property then 
    its mean dimension is given by 
    \begin{equation}  \label{eq: double VP}
      \begin{split}
      \mdim(\mathcal{X},T) & = \min_{\mathbf{d}\in \mathscr{D}(\mathcal{X})} \sup_{\mu\in \mathscr{M}^T(\mathcal{X})} 
      \overline{\rdim}(\mathcal{X},T,\mathbf{d}, \mu)  \\
      & = \min_{\mathbf{d}\in \mathscr{D}(\mathcal{X})} \sup_{\mu\in \mathscr{M}^T(\mathcal{X})} 
      \underline{\rdim}(\mathcal{X},T,\mathbf{d}, \mu).
      \end{split}
    \end{equation}
    Here ``$\min$'' indicates that the minimum is attained by some metric $\textbf{d}$.
    The main difference between (\ref{eq: double VP}) and the standard variational principle (\ref{eq: VP for entropy})
    is that $h_{\mathrm{top}}(T) = \sup_{\mu} h_\mu(T)$ is a maximization problem with respect to the single variable $\mu$ wheres
    (\ref{eq: double VP}) is a minimax problem with respect to the two variables $\mathbf{d}$ and $\mu$.
    By the word ``double'' we emphasize the point that there exist two variables $\mathbf{d}$ and $\mu$ playing different roles.
\end{itemize}

We will develop a fusion of the above two theories.

\subsection{Mean dimension with potential}  \label{subsection: mean dimension with potential}

We introduce a mean dimension analogue of topological pressure in this subsection.
Throughout the paper we assume that simplicial complexes are finite (namely, they have only finitely many simplexes). 
Let $P$ be a simplicial complex. For $a\in P$ we define the \textbf{local dimension} $\dim_a P$ 
as the maximum of $\dim \Delta$ where $\Delta\subset P$ is a simplex of $P$ containing $a$.
See Figure \ref{figure: local dimension}.

\begin{figure}[h] 
    \centering
    \includegraphics[width=3.0in]{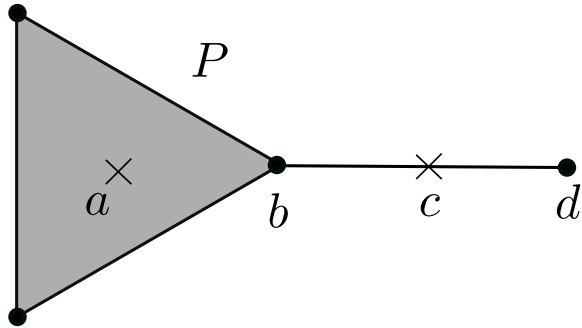}
    \caption{Here $P$ has four vertexes (denoted by dots), four $1$-dimensional simplexes and one $2$-dimensional simplex. 
    The points $b$ and $d$ are vertexes of $P$ wheres $a$ and $c$ are not. 
    We have $\dim_a P = \dim_b P =2$ and $\dim_c P = \dim_d P =1$.}    \label{figure: local dimension}
\end{figure}

Let $(\mathcal{X}, \mathbf{d})$ be a compact metric space and $f:\mathcal{X}\to \mathcal{Y}$ a continuous map into some topological space 
$\mathcal{Y}$.
For $\varepsilon>0$ we call the map $f$ an \textbf{$\varepsilon$-embedding} if $\diam f^{-1}(y) < \varepsilon$ for all $y\in \mathcal{Y}$.
Let $\varphi: \mathcal{X}\to \mathbb{R}$ be a continuous function.
We define the \textbf{$\varepsilon$-width dimension with potential} by
\begin{equation} \label{eq: widim with potential}
   \begin{split}
     & \widim_\varepsilon(\mathcal{X}, \mathbf{d}, \varphi)  \\ 
     & =  \inf\left\{\max_{x\in \mathcal{X}} \left(\dim_{f(x)} P + \varphi(x)\right) \middle|
    \parbox{3in}{\centering $P$ is a simplicial complex and $f:\mathcal{X}\to P$ is an $\varepsilon$-embedding}\right\}.  
   \end{split}
\end{equation}    

Let $T:\mathcal{X}\to \mathcal{X}$ be a homeomorphism.
For $N>0$ we define a metric $\mathbf{d}_N$ and a function $S_N\varphi$ on $\mathcal{X}$ by
\begin{equation}  \label{eq: dynamical metric and function}
    \mathbf{d}_N(x,y) = \max_{0\leq n <N} \mathbf{d}(T^n x, T^n y) \quad (x, y\in \mathcal{X}), \quad 
    S_N\varphi (x) = \sum_{n=0}^{N-1} \varphi(T^n x) \quad (x\in \mathcal{X}). 
\end{equation}    
We define the \textbf{mean dimension with potential} by 
\begin{equation}  \label{eq: mean dimension with potential}
   \mdim(\mathcal{X},T,\varphi) = 
    \lim_{\varepsilon \to 0} \left(\lim_{N\to \infty} \frac{\widim_\varepsilon\left(\mathcal{X}, \mathbf{d}_N, S_N\varphi\right)}{N}\right). 
\end{equation}    
The limits exist because the quantity $\widim_\varepsilon\left(\mathcal{X}, \mathbf{d}_N S_N\varphi\right)$ is subadditive in $N$ and 
monotone in $\varepsilon$.    
The value of $\mdim\left(\mathcal{X}, T, \varphi\right)$ is independent of the choice of $\mathbf{d}$.
Namely it becomes a topological invariant of 
$(\mathcal{X}, T, \varphi)$.
So we drop $\mathbf{d}$ from the notation.
When $\varphi=0$, the above (\ref{eq: mean dimension with potential}) specializes to the standard mean dimension:
$\mdim(\mathcal{X}, T, 0) = \mdim(\mathcal{X},T)$.

\subsection{Statement of the main result}  \label{subsection: statement of the main result}

Recall that, for a dynamical system $(\mathcal{X}, T)$, we denote by $\mathscr{D}(\mathcal{X})$ and $\mathscr{M}^T(\mathcal{X})$
the sets of metrics and invariant probability measures on it respectively.
The following is our main theorem.

\begin{theorem}[\textbf{Main result}]  \label{main theorem}
Let $(\mathcal{X},T)$ be a dynamical system with the marker property and let 
$\varphi:\mathcal{X}\to \mathbb{R}$ be a continuous function. Then
\begin{equation} \label{eq: double VP with potential}
    \begin{split}
      \mdim(\mathcal{X}, T, \varphi) & = \min_{\mathbf{d}\in \mathscr{D}(\mathcal{X})} \sup_{\mu\in \mathscr{M}^T(\mathcal{X})}
       \left( \overline{\rdim}(\mathcal{X},T,\mathbf{d},\mu) + \int_{\mathcal{X}} \varphi \, d\mu \right) \\
       & =   \min_{\mathbf{d}\in \mathscr{D}(\mathcal{X})} \sup_{\mu\in \mathscr{M}^T(\mathcal{X})}
       \left( \underline{\rdim}(\mathcal{X},T,\mathbf{d},\mu) + \int_{\mathcal{X}} \varphi \, d\mu \right).                   
       \end{split}
\end{equation}    
\end{theorem}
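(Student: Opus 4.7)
The plan is to follow the two-sided strategy that yielded the double variational principle \eqref{eq: double VP} in \cite{Lindenstrauss--Tsukamoto double VP}, inserting the potential $\varphi$ into each step. Since $\underline{\rdim} \leq \overline{\rdim}$ trivially, it is enough to prove the sandwich
\begin{equation*}
\min_{\mathbf{d}} \sup_{\mu} \left(\overline{\rdim}(\mathcal{X},T,\mathbf{d},\mu) + \int_\mathcal{X} \varphi\, d\mu\right) \leq \mdim(\mathcal{X},T,\varphi) \leq \min_{\mathbf{d}} \sup_{\mu} \left(\underline{\rdim}(\mathcal{X},T,\mathbf{d},\mu) + \int_\mathcal{X} \varphi\, d\mu\right),
\end{equation*}
which forces both minimax values to equal $\mdim(\mathcal{X},T,\varphi)$.

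\textbf{Upper bound (construction of a good metric).} For the first inequality I would exhibit a single metric $\mathbf{d}^* \in \mathscr{D}(\mathcal{X})$ built from the marker property, in the style of \cite{Lindenstrauss--Tsukamoto double VP}. Given $\delta > 0$, choose $\varepsilon, N$ so that $\widim_\varepsilon(\mathcal{X}, \mathbf{d}_N, S_N\varphi)/N$ is within $\delta$ of $\mdim(\mathcal{X},T,\varphi)$, and let $f_N : \mathcal{X} \to P_N$ be an $\varepsilon$-embedding nearly realizing the infimum in \eqref{eq: widim with potential}. A marker of depth $N$ chops every orbit into $N$-blocks, and one stitches the block-level embeddings $f_N$ into a metric $\mathbf{d}^*$ whose fine-scale geometry is governed by $f_N$. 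The new ingredient is a $\varphi$-weighted Shannon code: for each invariant $\mu$, build a rate distortion scheme for $\{T^n X\}$ in which the bit budget at $x$ is $\dim_{f_N(x)} P_N \cdot \log(1/\eta)$ rather than a fixed constant, so that the pressure-like quantity $\dim_{f_N(x)} P_N + \varphi(x)$ controls the weighted rate distortion bound. After normalization this yields $\overline{\rdim}(\mathcal{X},T,\mathbf{d}^*,\mu) + \int \varphi\, d\mu \leq \mdim(\mathcal{X},T,\varphi) + \delta$.

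\textbf{Lower bound (dynamical geometric measure theory).} For the second inequality, fix any $\mathbf{d} \in \mathscr{D}(\mathcal{X})$ and $\varepsilon > 0$. The argument is a potential-weighted dynamicalization of the geometric measure theory step in \cite{Lindenstrauss--Tsukamoto double VP}. For each large $N$, a Gromov-style projection-to-nerves argument applied to any candidate $\varepsilon$-embedding produces a probability measure $\nu_N$ on $\mathcal{X}$ which is sufficiently spread in the $\mathbf{d}_N$-metric that its rate distortion function at scale $\varepsilon$ is essentially bounded below by $\widim_\varepsilon(\mathcal{X}, \mathbf{d}_N, S_N\varphi) - \int S_N \varphi\, d\nu_N$. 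Replacing $\nu_N$ by its dynamical average $\tilde\nu_N = N^{-1}\sum_{n<N} T^n_* \nu_N$ and passing to a weak-$*$ limit $\mu \in \mathscr{M}^T(\mathcal{X})$ gives an invariant measure; continuity of $\varphi$ identifies $\int \varphi\, d\mu$ with the limit of $\int \varphi\, d\tilde\nu_N$, while the rate distortion bound survives averaging up to $o(1)$ losses. Letting $\varepsilon \to 0$ yields the required inequality on $\underline{\rdim}(\mathcal{X},T,\mathbf{d},\mu) + \int \varphi\, d\mu$.

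\textbf{Main obstacle.} The principal difficulty is the weighted form of the geometric measure theoretic construction in the lower bound. In the unweighted case one uses an essentially uniform distribution on a combinatorial cube of witness points, and a straightforward packing argument produces the desired rate distortion lower bound. With $\varphi$ present, the measure $\nu_N$ must be \emph{tilted} so that its Radon--Nikodym derivative with respect to the uniform witness measure encodes the weight $2^{S_N \varphi}$ (or an analogous logarithmic factor), and one must verify that this tilt does not destroy the $\mathbf{d}_N$-separation estimates on which the packing argument rests. Balancing the tilt---needed to make $\int \varphi\, d\mu$ large---against preservation of spread---needed to keep the rate distortion function large---and simultaneously controlling both quantities through dynamical averaging and weak-$*$ limits is where the technical heart of the proof will lie.
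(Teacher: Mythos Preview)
Your sandwich strategy is correct, and the paper follows the same overall logic. However, the paper's execution is structurally different from what you sketch, and in particular it completely sidesteps the ``main obstacle'' you identify.

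The paper introduces two intermediate quantities---\emph{metric mean dimension with potential} $\overline{\mdim}_{\mathrm{M}}(\mathcal{X},T,\mathbf{d},\varphi)$ and \emph{mean Hausdorff dimension with potential} $\mdim_{\mathrm{H}}(\mathcal{X},T,\mathbf{d},\varphi)$---and proves the chain
\[
\mdim(\mathcal{X},T,\varphi) \;\leq\; \mdim_{\mathrm{H}}(\mathcal{X},T,\mathbf{d},\varphi) \;\leq\; \sup_{\mu}\left(\underline{\rdim}+\int\varphi\,d\mu\right) \;\leq\; \sup_{\mu}\left(\overline{\rdim}+\int\varphi\,d\mu\right) \;\leq\; \overline{\mdim}_{\mathrm{M}}(\mathcal{X},T,\mathbf{d},\varphi)
\]
for \emph{every} metric $\mathbf{d}$, and then uses the marker property only to find a metric with $\overline{\mdim}_{\mathrm{M}}=\mdim$. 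Two points of contrast with your plan:

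\textbf{Upper bound.} You propose to bound $\overline{\rdim}+\int\varphi$ directly by a $\varphi$-weighted Shannon code. The paper instead proves the general, easy inequality $\overline{\rdim}+\int\varphi\leq\overline{\mdim}_{\mathrm{M}}$ (a one-line Gibbs/KL argument, Lemma~\ref{lemma: KL divergence is nonnegative}) and puts all the work into the metric construction making $\overline{\mdim}_{\mathrm{M}}$ small. This decoupling is cleaner: the marker-based construction never touches measures or rate distortion.

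\textbf{Lower bound.} Your ``projection-to-nerves'' description is the wrong direction---nerve maps are used to prove $\mdim\leq\mdim_{\mathrm{H}}$, not to build measures. More importantly, the paper does \emph{not} tilt a measure by $2^{S_N\varphi}$. Instead, given $s<\mdim_{\mathrm{H}}$, it stratifies $\mathcal{X}$ into level sets $\{t\leq S_N\varphi/N\leq t+\eta\}$ and finds, by pigeonhole, a level $t$ on which the \emph{ordinary} (unweighted) Hausdorff dimension in $\overline{\mathbf d}_N$ is at least $(s-t)N$. Standard Howroyd--Frostman on that level set gives $\nu_N$ with the right scaling; averaging and passing to a weak-$*$ limit yields $\mu$ with $\int\varphi\,d\mu\geq t$ and $\underline{\rdim}\geq c(s-t)$, hence $\underline{\rdim}+\int\varphi\geq cs-(1-c)\|\varphi\|_\infty$. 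The potential and the dimension are separated \emph{before} Frostman is applied, so the balancing problem you flag never arises. Your tilting approach may be salvageable, but the level-set trick is both simpler and what the paper actually does.
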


\begin{remark}  \label{remark: marker property}
We conjecture that the marker property assumption in Theorem \ref{main theorem} is unnecessary.
Namely we conjecture that (\ref{eq: double VP with potential}) holds for any dynamical system $(\mathcal{X}, T)$ and
any continuous function $\varphi:\mathcal{X}\to \mathbb{R}$.
The proof of Theorem \ref{main theorem} shows that the inequality 
\begin{equation} \label{eq: one direction of double VP}
    \mdim(\mathcal{X}, T,\varphi) \leq  \inf_{\mathbf{d}\in \mathscr{D}(\mathcal{X})} \sup_{\mu\in \mathscr{M}^T(\mathcal{X})}
       \left( \underline{\rdim}(\mathcal{X},T,\mathbf{d},\mu) + \int_{\mathcal{X}} \varphi \, d\mu \right) 
\end{equation}       
holds without the marker property assumption.
So the problem is how to prove the reverse inequality.
\end{remark}

\begin{example}  \label{example: shift on the Hilbert cube}
Let $[0,1]^\mathbb{Z}$ be the infinite product of the unit interval $[0,1]$ index by integers.
Let $\sigma: [0,1]^\mathbb{Z}\to [0,1]^\mathbb{Z}$ be the shift.
Define $\varphi:[0,1]^\mathbb{Z}\to \mathbb{R}$ by 
\[ \varphi\left((x_n)_{n\in \mathbb{Z}}\right) = x_0. \]
Then it is easy to check 
\[ \mdim\left([0,1]^\mathbb{Z}, \sigma, \varphi\right) = 2. \]
Define a metric $\mathbf{d}$ on $[0,1]^\mathbb{Z}$ by 
\[ \mathbf{d}\left((x_n)_{n\in \mathbb{Z}}, (y_n)_{n\in \mathbb{Z}}\right) = \sum_{n\in \mathbb{Z}} 2^{-|n|} |x_n-y_n|. \]
We can check that for any invariant probability measure $\mu \in \mathscr{M}^\sigma\left([0,1]^\mathbb{Z}\right)$
\[  \overline{\rdim}\left([0,1]^\mathbb{Z},\sigma, \mathbf{d},\mu\right) \leq 1, \quad \int_{[0,1]^\mathbb{Z}} \varphi \, d\mu \leq 1. \]
In particular 
\[    \underline{\rdim}\left([0,1]^\mathbb{Z},\sigma, \mathbf{d},\mu\right) + \int_{[0,1]^\mathbb{Z}} \varphi \, d\mu \leq
     \overline{\rdim}\left([0,1]^\mathbb{Z},\sigma, \mathbf{d},\mu\right) + \int_{[0,1]^\mathbb{Z}} \varphi \, d\mu \leq 2. \]
On the other hand, the inequality (\ref{eq: one direction of double VP}) holds for all dynamical systems.
So we get 
\begin{equation*}
    \begin{split}
     \mdim\left([0,1]^\mathbb{Z}, \sigma, \varphi\right) = 2 
    & =  \sup_{\mu\in \mathscr{M}^\sigma\left([0,1]^\mathbb{Z}\right)} \left(\underline{\rdim}\left([0,1]^\mathbb{Z},\sigma, \mathbf{d},\mu\right)
    + \int_{[0,1]^\mathbb{Z}} \varphi\, d\mu\right) \\
   &   = \sup_{\mu\in \mathscr{M}^\sigma\left([0,1]^\mathbb{Z}\right)} \left(\overline{\rdim}\left([0,1]^\mathbb{Z},\sigma, \mathbf{d},\mu\right)
    + \int_{[0,1]^\mathbb{Z}} \varphi\, d\mu\right).
    \end{split}
\end{equation*}    
Indeed, we can check this more directly.
Let $\nu$ be the Lebesgue measure on $[0,1]$, and let $\nu_k$ $(k\geq 1)$ be the probability measure on $[0,1]$ defined by 
\[  \nu_k(A) = k\cdot \nu\left(A\cap \left[1-\frac{1}{k},1\right]\right). \]
We define an invariant probability measure $\mu_k$ on $[0,1]^\mathbb{Z}$ by 
$\mu_k = \nu_k^{\otimes \mathbb{Z}}$.  
Then 
\[  \rdim\left([0,1]^\mathbb{Z}, \sigma, \mathbf{d},\mu_k\right) = 1, \quad 
     \int_{[0,1]^\mathbb{Z}} \varphi  \, d\mu_k = 1-\frac{1}{2k}.  \]
Hence 
\[  \rdim\left([0,1]^\mathbb{Z}, \sigma, \mathbf{d},\mu_k\right) +  \int_{[0,1]^\mathbb{Z}} \varphi  \, d\mu_k
    \to 2 \quad (k\to \infty).  \]
This example is very simple.
We plan to study a much deeper example in a future paper.
See \S \ref{subsection: future directions}.
\end{example}

\subsection{Main ingredients of the proof}  \label{subsection: main ingredients of the proof}

The proof of Theorem \ref{main theorem} follows the line of ideas developed in \cite{Lindenstrauss--Tsukamoto double VP}.
The basic idea is a \textit{dynamicalization} of geometric measure theory.
We consider the following four fundamental ingredients of geometric measure theory:

\begin{itemize}
  \item Minkowski dimension.
  \item Hausdorff dimension.
  \item Frostman's lemma \cite{Howroyd}: For a compact metric space $(\mathcal{X}, \mathbf{d})$ we can construct a probability measure on it 
           satisfying the ``scaling law'' of degree given by the Hausdorff dimension.
  \item Pontrjagin--Schnirelmann's theorem \cite{Pontrjagin--Schnirelmann}: 
     For a compact metrizable space $\mathcal{X}$ we can construct a metric $\mathbf{d}$ on it 
     for which the upper Minkowski dimension is equal to the topological dimension.
\end{itemize}

The paper \cite{Lindenstrauss--Tsukamoto double VP} developed dynamical analogues of these ingredients.
A dynamical version of Minkowski dimension is \textit{metric mean dimension} introduce by 
Lindenstrauss--Weiss \cite{Lindenstrauss--Weiss}.
A corresponding ``dynamical Pontrjagin--Schnirelamann's theorem'' was proved in \cite{Lindenstrauss--Tsukamoto double VP},
developing the idea of Lindenstrauss \cite{Lindenstrauss}.
The paper \cite{Lindenstrauss--Tsukamoto double VP} introduced the notion \textit{mean Hausdorff dimension}
(a dynamical version of Hausdorff dimension) and established ``dynamical Frostman's lemma''.
Combining these ingredients, we proved the double variational principle
(\ref{eq: double VP}) in \cite{Lindenstrauss--Tsukamoto double VP}.

The main point of the proof of Theorem \ref{main theorem}
is how to combine the information of potential function to the above objects.
It is somehow surprising (at least for the author) that the argument of \cite{Lindenstrauss--Tsukamoto double VP}
is so robust that we can naturally adapt everything to the setting ``with potential''.
Probably the most important contribution of the present paper is that we clarify how to 
define mean topological/Minkowski/Hausdorff dimensions \textit{with potential}.
The definition of mean (topological) dimension with potential was already given in \S \ref{subsection: mean dimension with potential}.
The other two are defined as follows. Let $(\mathcal{X},\mathbf{d})$ be a compact metric space with a continuous function 
$\varphi:\mathcal{X}\to \mathbb{R}$.

\begin{itemize}
    \item \textbf{Metric mean dimension with potential:}
    For $\varepsilon>0$ we set 
    \begin{equation} \label{eq: covering number with potential}
       \#\left(\mathcal{X},\mathbf{d},\varphi,\varepsilon\right) = 
  \inf\left\{ \sum_{i=1}^n (1/\varepsilon)^{\sup_{U_i} \varphi} \middle|\,
   \parbox{3in}{\centering  $\mathcal{X} = U_1\cup \dots \cup U_n$ is an open cover with $\diam\, U_i < \varepsilon$
                   for all $1\leq i\leq n$}\right\}.  
    \end{equation}               
   Given a homeomorphism $T:\mathcal{X}\to \mathcal{X}$, we define a metric $\mathbf{d}_N$ and a function $S_N\varphi$ 
   on $\mathcal{X}$ by 
   (\ref{eq: dynamical metric and function}) in \S \ref{subsection: mean dimension with potential}.
   We set 
   \[  P(\mathcal{X},T,\mathbf{d},\varphi,\varepsilon) = \lim_{N\to \infty} 
   \frac{\log  \#\left(\mathcal{X}, \mathbf{d}_N,S_N\varphi,\varepsilon\right)}{N}. \]
   This limit exists because $\log  \#\left(\mathcal{X},\mathbf{d}_N,S_N\varphi,\varepsilon\right)$ is subadditive in $N$.
   We define the \textbf{upper and lower metric mean dimensions with potential} by 
   \begin{equation*}
     \begin{split}
       \overline{\mdim}_{\mathrm{M}}(\mathcal{X},T,\mathbf{d},\varphi) &= 
       \limsup_{\varepsilon\to 0} \frac{P(\mathcal{X},T,\mathbf{d},\varphi,\varepsilon)}{\log(1/\varepsilon)}, \\
       \underline{\mdim}_{\mathrm{M}}(\mathcal{X},T,\mathbf{d},\varphi) & = 
       \liminf_{\varepsilon\to 0} \frac{P(\mathcal{X},T,\mathbf{d},\varphi,\varepsilon)}{\log(1/\varepsilon)}. 
     \end{split}
   \end{equation*}
     When the upper and lower limits coincide, we denote the common value by 
     $\mdim_{\mathrm{M}}(\mathcal{X},T,\mathbf{d},\varphi)$.
   \item \textbf{Mean Hausdorff dimension with potential:}
   For $\varepsilon>0$ and $s\geq \max_{\mathcal{X}} \varphi$ we set 
   \[ \mathcal{H}^s_\varepsilon(\mathcal{X},\mathbf{d},\varphi) = 
    \inf \left\{\sum_{i=1}^\infty \left(\diam E_i\right)^{s-\sup_{E_i} \varphi} \middle|
     \mathcal{X} = \bigcup_{i=1}^\infty E_i \text{ with } \diam E_i < \varepsilon 
                        \text{ for all $i\geq 1$}\right\}. \]
     Here we have used the convention that $0^0 = 1$ and $(\diam \, \emptyset)^s = 0$ for all $s\geq 0$.
     Note that this convention implies $\mathcal{H}^{\max_\mathcal{X} \varphi}_\varepsilon (\mathcal{X},\mathbf{d}, \varphi) \geq 1$. 
     We define $\dim_{\mathrm{H}}(\mathcal{X},\mathbf{d},\varphi,\varepsilon)$ as the supremum of $s\geq \max_{\mathcal{X}}\varphi$
     satisfying $\mathcal{H}^s_\varepsilon(\mathcal{X},\mathbf{d},\varphi) \geq 1$.
     Given a homeomorphism $T:\mathcal{X}\to \mathcal{X}$, we define the \textbf{mean Hausdorff dimension with potential}
     by 
     \[  \mdim_{\mathrm{H}}(\mathcal{X},T,\mathbf{d},\varphi) = 
         \lim_{\varepsilon\to 0} \left(\limsup_{N\to \infty} \frac{\dim_{\mathrm{H}}(\mathcal{X},\mathbf{d}_N,S_N\varphi, \varepsilon)}{N}\right). \]
     We can also define the \textbf{lower mean Hausdorff dimension with potential} 
     $\underline{\mdim}_{\mathrm{H}}(\mathcal{X},T,\mathbf{d},\varphi)$
     by replacing $\limsup_N$ with $\liminf_N$ in this definition. But we do not need this concept in the paper.    
\end{itemize}

It is well-known in the classical dimension theory that 
\[ \text{Topological dimension} \leq \text{Hausdorff dimension} \leq \text{Minkowski dimension}. \]
The following is its dynamical version (with potential).

\begin{theorem}[$=$ Theorem \ref{theorem: mean Hausdorff dimension bounds mean dimension restated}] 
\label{theorem: mean Hausdorff dimension bounds mean dimension}
\[  \mdim(\mathcal{X},T,\varphi) \leq \mdim_{\mathrm{H}}(\mathcal{X},T,\mathbf{d}, \varphi) \leq 
     \underline{\mdim}_{\mathrm{M}}(\mathcal{X},T,\mathbf{d},\varphi). \]
\end{theorem}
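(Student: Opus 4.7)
The plan is to establish the two inequalities separately. The second, $\mdim_{\mathrm{H}} \leq \underline{\mdim}_{\mathrm{M}}$, reduces to a clean static estimate. The first, $\mdim \leq \mdim_{\mathrm{H}}$, requires a weighted analogue of the classical bound ``topological dimension $\leq$ Hausdorff dimension'' and is where the real technical work concentrates.

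For the second inequality, the key observation is that for any compact metric space $(\mathcal{X}, \mathbf{d})$, continuous $\varphi$, $\varepsilon \in (0, 1)$, and $s \geq \max_{\mathcal{X}} \varphi$, any open cover $\mathcal{X} = U_1 \cup \dots \cup U_n$ with $\diam U_i < \varepsilon$ is a valid competitor in the definition of $\mathcal{H}^s_\varepsilon$. Since $s - \sup_{U_i} \varphi \geq 0$ and $\diam U_i < \varepsilon < 1$, one has
\[ (\diam U_i)^{s - \sup_{U_i}\varphi} \leq \varepsilon^{s - \sup_{U_i}\varphi} = \varepsilon^s \cdot (1/\varepsilon)^{\sup_{U_i} \varphi}, \]
so that summing and taking the infimum over covers yields $\mathcal{H}^s_\varepsilon(\mathcal{X}, \mathbf{d}, \varphi) \leq \varepsilon^s \cdot \#(\mathcal{X}, \mathbf{d}, \varphi, \varepsilon)$. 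Hence $\mathcal{H}^s_\varepsilon \geq 1$ forces $s \leq \log \#(\mathcal{X}, \mathbf{d}, \varphi, \varepsilon) / \log(1/\varepsilon)$, and thus $\dim_{\mathrm{H}}(\mathcal{X}, \mathbf{d}, \varphi, \varepsilon) \leq \log \#(\mathcal{X}, \mathbf{d}, \varphi, \varepsilon) / \log(1/\varepsilon)$. Applying this with $(\mathbf{d}_N, S_N \varphi)$ in place of $(\mathbf{d}, \varphi)$, dividing by $N$, taking $\limsup_{N \to \infty}$, and then $\liminf_{\varepsilon \to 0}$ on both sides delivers $\mdim_{\mathrm{H}} \leq \underline{\mdim}_{\mathrm{M}}$; on the left, monotonicity in $\varepsilon$ makes the $\liminf_\varepsilon$ coincide with the prescribed limit in the definition of $\mdim_{\mathrm{H}}$.

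For the first inequality, the target static lemma is: for every $\varepsilon > 0$,
\[ \widim_\varepsilon(\mathcal{X}, \mathbf{d}, \varphi) \leq \dim_{\mathrm{H}}(\mathcal{X}, \mathbf{d}, \varphi, \varepsilon). \]
Once established, applying it to $(\mathbf{d}_N, S_N \varphi)$, dividing by $N$, taking $\lim_{N}$ on the left (using subadditivity) and $\limsup_{N}$ on the right, and finally $\lim_{\varepsilon \to 0}$ on both sides yields $\mdim \leq \mdim_{\mathrm{H}}$. To prove the lemma, pick any $s > \dim_{\mathrm{H}}(\mathcal{X}, \mathbf{d}, \varphi, \varepsilon)$, so that $\mathcal{H}^s_\varepsilon(\mathcal{X}, \mathbf{d}, \varphi) < 1$, which furnishes a countable cover $\{E_i\}$ with $\diam E_i < \varepsilon$ and $\sum_i (\diam E_i)^{s - \sup_{E_i}\varphi} < 1$. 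After slightly enlarging each $E_i$ to an open $U_i$ with $\diam U_i < \varepsilon$ and $\sup_{U_i} \varphi$ close to $\sup_{E_i} \varphi$, I would group the $U_i$ by dyadic scale $\diam U_i \in [2^{-k-1}, 2^{-k})$ and build the desired $\varepsilon$-embedding as a map into the nerve of a Hurewicz-style refinement constructed scale by scale, so that the local dimension of the nerve at $f(x)$ satisfies $\dim_{f(x)} P + \varphi(x) \leq s$.

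The main obstacle is precisely this weighted multiplicity control. In the unweighted setting the dyadic refinement of \cite{Lindenstrauss--Tsukamoto double VP} produces a cover of bounded multiplicity by a single global constant, but in the present setting the multiplicity at $x$ must be at most $\lfloor s - \varphi(x) \rfloor + 1$, which varies with $x$ and must be paid for locally against the weighted Hausdorff sum. I would redo the dyadic book-keeping carrying the weights through: the count of level-$k$ sets that meet a small neighborhood of $x$ is controlled by the weighted bound $\sum_i (\diam E_i)^{s - \sup_{E_i}\varphi} < 1$ rather than the unweighted $\sum (\diam E_i)^s$, and this must be translated into a pointwise multiplicity bound whose local budget involves $\varphi(x)$. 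The robustness of the Lindenstrauss--Tsukamoto construction highlighted in \S\ref{subsection: main ingredients of the proof} is what I expect to make this translation viable, and this is the step that requires the most care.
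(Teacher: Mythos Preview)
Your treatment of the second inequality $\mdim_{\mathrm{H}} \leq \underline{\mdim}_{\mathrm{M}}$ is correct and matches the paper's proof exactly.

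For the first inequality, however, there is a genuine gap. You aim for a clean static lemma $\widim_\varepsilon(\mathcal{X},\mathbf{d},\varphi) \leq \dim_{\mathrm{H}}(\mathcal{X},\mathbf{d},\varphi,\varepsilon)$ via a weighted Hurewicz-type refinement controlling local multiplicity pointwise. But the step you yourself flag as ``the main obstacle'' --- turning the global bound $\sum_i (\diam E_i)^{s-\sup_{E_i}\varphi}<1$ into a pointwise multiplicity bound $\lfloor s-\varphi(x)\rfloor+1$ at each $x$ --- is not a translation; it is the entire problem, and a dyadic count of how many sets meet a neighborhood of $x$ does not follow from the weighted sum alone. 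Nothing in the Hausdorff sum prevents many small sets of large $\sup\varphi$ from piling up at a single point.

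The paper does \emph{not} prove such a static inequality. Instead it takes a completely different geometric route (Lemma \ref{lemma: widim estimate via hausdorff measure}): fix a Lipschitz map $f:\mathcal{X}\to[0,1]^N$ that spreads $\varepsilon$-distant points to $\norm{\cdot}$-distance $1$, stratify $\mathcal{X}=\bigcup_k \mathcal{X}_k$ by the integer part of $s-\varphi$, use the Hausdorff bound together with volume estimates (Lemma \ref{lemma: measure of projections}) to find a point $q\in(0,1)^N$ missed by all relevant coordinate-projections of $f(\mathcal{X}_k)$, and then centrally project from $q$ through the skeleta of the cube, pushing $f(\mathcal{X}_k)$ into the $(k{+}1)$-skeleton. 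This yields only $\widim'_\varepsilon \leq s+1$, where $\widim'$ uses the \emph{small} local dimension $\dim'_a P$ (minimum over simplices containing $a$), and only under the lossy hypothesis $4^N(L+1)^{1+s+\norm{\varphi}_\infty}\mathcal{H}^s_1<1$. The extra factors and the ``$+1$'' are absorbed only in the dynamical limit: applied to $(\mathbf{d}_{N_i},S_{N_i}\varphi)$ with $f_{N_i}:\mathcal{X}\to[0,1]^{MN_i}$, one uses $\mathcal{H}^{(s+\tau)N_i}_\delta \leq \delta^{\tau N_i}\mathcal{H}^{sN_i}_\delta$ to beat the $4^{MN_i}(L+1)^{\cdots}$ factor, then divides by $N_i$. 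The passage from $\widim'$ back to $\widim$ costs only $\var_\varepsilon(\varphi,\mathbf{d})$ (Lemma \ref{lemma: widim and small widim}), which vanishes in the limit. Your nerve-based plan has no analogue of the projection/skeleton mechanism and no way to absorb constants dynamically, so as written it does not go through.
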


The following is a version of ``dynamical Frostman's lemma''.
It states that we can construct invariant probability measures capturing dynamical complexity of $(\mathcal{X},T,\mathbf{d},\varphi)$.

\begin{theorem}[$\subset$ Theorem \ref{theorem: dynamical Frostman's lemma refined}]  \label{theorem: dynamical Frostman's lemma}
Under a mild condition on $\mathbf{d}$ (called tame growth of covering numbers; see Definition \ref{def: tame growth of covering numbers})
\[  \mdim_{\mathrm{H}}(\mathcal{X},T,\mathbf{d},\varphi) \leq \sup_{\mu\in \mathscr{M}^T(\mathcal{X})} 
     \left(\underline{\rdim}(\mathcal{X},T,\mathbf{d},\mu) + \int_{\mathcal{X}} \varphi \, d\mu\right). \]
\end{theorem}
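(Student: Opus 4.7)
The plan is to adapt the proof of the non-potential dynamical Frostman lemma in \cite{Lindenstrauss--Tsukamoto double VP}, threading the weight $S_N\varphi$ through each step. The argument has three movements: a static weighted Frostman lemma, a dynamical averaging step, and a conversion of Frostman scaling into a rate-distortion bound that keeps the potential term on the correct side.

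For the static step, I would first prove that on any compact metric space $(\mathcal{Y},\mathbf{e})$ with continuous weight $\psi:\mathcal{Y}\to\mathbb{R}$ satisfying $\mathcal{H}^s_\varepsilon(\mathcal{Y},\mathbf{e},\psi) \geq 1$, there exists a Borel probability measure $\nu$ obeying $\nu(E) \leq (\diam E)^{s - \sup_E \psi}$ for every Borel $E$ with $\diam E < \varepsilon$. Following Howroyd's proof of Frostman's lemma, one distributes mass along a dyadic-like hierarchy of balls using the weighted exponents in place of the usual ones; uniform continuity of $\psi$ keeps $\sup_E \psi$ stable on each cell, so the inductive bookkeeping closes.

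For the dynamical step, fix $s < \mdim_{\mathrm{H}}(\mathcal{X},T,\mathbf{d},\varphi)$ and choose sequences $\varepsilon_k \downarrow 0$ and $N_k \to \infty$ with $\dim_{\mathrm{H}}(\mathcal{X},\mathbf{d}_{N_k},S_{N_k}\varphi,\varepsilon_k)/N_k > s$. Apply the static lemma to $(\mathcal{X},\mathbf{d}_{N_k},S_{N_k}\varphi)$ at exponent close to $sN_k$ to obtain $\nu_k$ with
\[
  \nu_k(E) \leq (\diam_{\mathbf{d}_{N_k}} E)^{sN_k - \sup_E S_{N_k}\varphi}
\]
for $\mathbf{d}_{N_k}$-sets of diameter below $\varepsilon_k$. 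Form the Cesàro average $\mu_k = \frac{1}{N_k}\sum_{n=0}^{N_k-1} T^n_* \nu_k$ and extract a weak-$\ast$ limit $\mu \in \mathscr{M}^T(\mathcal{X})$. For a partition $\mathcal{P}_k$ of $\mathcal{X}$ into sets of $\mathbf{d}_{N_k}$-diameter below $\varepsilon_k$, the Frostman scaling gives
\[
  H_{\nu_k}(\mathcal{P}_k) \geq \log(1/\varepsilon_k)\left(sN_k - \sum_{P \in \mathcal{P}_k} \nu_k(P)\sup_P S_{N_k}\varphi\right),
\]
and uniform continuity of $\varphi$ on $\varepsilon_k$-small sets shows the rightmost sum equals $N_k\!\int\! \varphi\, d\mu_k + o(N_k)$ (using $\int S_{N_k}\varphi\, d\nu_k = N_k \int \varphi\, d\mu_k$ by construction of $\mu_k$). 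Converting the partition-entropy lower bound into a rate-distortion lower bound via the Shannon inequality, and absorbing the covering-number overhead by tame growth, should yield
\[
  \frac{R(\mathbf{d}_{N_k}, \nu_k, \delta_k)}{N_k \log(1/\varepsilon_k)} + \int \varphi \, d\mu_k \geq s - o(1)
\]
for a suitable $\delta_k$; passing to the limit then gives $\underline{\rdim}(\mathcal{X},T,\mathbf{d},\mu) + \int \varphi \, d\mu \geq s$.

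The main obstacle will be the last passage: one needs a rate-distortion lower bound for the invariant limit $\mu$, not merely for the approximants $\mu_k$ living on the $N_k$-fold product metric $\mathbf{d}_{N_k}$, and the $\liminf$ in $\underline{\rdim}$ forces the bound to hold along \emph{arbitrary} resolutions, not only the chosen $\varepsilon_k$. Controlling the trade-off between the averaging window $N_k$, the partition scale $\varepsilon_k$, and the rate-distortion resolution $\delta_k$, while simultaneously preserving $\int \varphi\, d\mu$ under weak-$\ast$ limits, is where the tame growth assumption (Definition \ref{def: tame growth of covering numbers}) is essential, and is the technically most delicate part of the argument.
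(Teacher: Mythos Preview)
Your overall architecture (Frostman $\to$ Ces\`aro average $\to$ rate-distortion) matches the paper's, but you are missing the key organizational idea the paper uses to handle the potential. The paper does \emph{not} prove a weighted Frostman lemma. Instead it pigeonholes on the level sets of $S_{n_k}\varphi/n_k$: there exists $t\in[-\norm{\varphi}_\infty,\norm{\varphi}_\infty]$ such that the slice $\{t\le S_{n_k}\varphi/n_k\le t+\eta\}$ has \emph{unweighted} Hausdorff dimension (in the metric $\overline{\mathbf{d}}_{n_k}$) at least $(s-t)n_k$ for infinitely many $n_k$. On that slice the ordinary Howroyd--Frostman lemma yields $\nu_k$ with \emph{constant-exponent} scaling $\nu_k(E)\le(\diam E)^{c(s-t)n_k}$. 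Because $\nu_k$ is supported on the slice, $\int\varphi\,d\mu_k\ge t$ is immediate; and because the exponent is constant, Lemma~\ref{lemma: geometry and mutual information} together with the Misiurewicz-type coupling argument deliver $\underline{\rdim}(\mathcal{X},T,\mathbf{d},\mu)\ge c(s-t)$ for the weak-$*$ limit $\mu$, with no further reference to $\varphi$. Adding the two gives $cs-(1-c)\norm{\varphi}_\infty$.

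Your route via a weighted Frostman lemma faces a concrete obstruction at the step you correctly flag as delicate. The bridge from Frostman scaling to a mutual-information (hence rate-distortion) lower bound is Lemma~\ref{lemma: geometry and mutual information}, which requires a \emph{fixed} exponent $s$; your $\nu_k$ carries the $E$-dependent exponent $sN_k-\sup_E S_{N_k}\varphi$, so that lemma is not directly applicable. Your partition-entropy inequality $H_{\nu_k}(\mathcal{P}_k)\ge\log(1/\varepsilon_k)\bigl(sN_k-\sum_P\nu_k(P)\sup_P S_{N_k}\varphi\bigr)$ is correct, but it is a one-shot statement about $\nu_k$ at the single scale $\varepsilon_k$; it does not by itself control $R(\mathbf{d},\mu,\varepsilon)$ for the \emph{invariant} limit $\mu$ at \emph{all} small $\varepsilon$, which is what $\underline{\rdim}$ requires. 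Without either the slicing trick or a genuine variable-exponent analogue of Lemma~\ref{lemma: geometry and mutual information}, this gap remains open.

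Finally, you misplace the role of tame growth. In the paper it is used \emph{only} in Lemma~\ref{lemma: L^1 mean hausdorff dimension is equal to mean hausdorff dimension}, to show $\mdim_{\mathrm{H}}=\mdim_{\mathrm{H},L^1}$; the entire Frostman/rate-distortion argument is then run in the averaged metric $\overline{\mathbf{d}}_{n_k}$ (which matches the distortion criterion in the definition of $R(\mathbf{d},\mu,\varepsilon)$), with no further appeal to tame growth. Working in $\mathbf{d}_{N_k}$, as you do, creates an additional mismatch with the $L^1$ distortion condition that you have not addressed.
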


On the other hand, it is easy to show:

\begin{proposition}[$=$ Proposition \ref{prop: metric mean dimension bounds rate distortion dimension restated}] 
\label{prop: metric mean dimension bounds rate distortion dimension}
For any $\mu\in \mathscr{M}^T(\mathcal{X})$
\begin{equation*}
  \begin{split}
    \overline{\rdim}(\mathcal{X},T,\mathbf{d},\mu) + \int_{\mathcal{X}} \varphi \, d\mu  & 
    \leq \overline{\mdim}_{\mathrm{M}}(\mathcal{X}, T, \mathbf{d},  \varphi), \\
    \underline{\rdim}(\mathcal{X},T,\mathbf{d}, \mu) + \int_{\mathcal{X}} \varphi \, d\mu  &
     \leq \underline{\mdim}_{\mathrm{M}}(\mathcal{X},T, \mathbf{d}, \varphi).    
  \end{split}
\end{equation*}  
\end{proposition}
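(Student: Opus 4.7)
The plan is to mimic the classical bound $R(\mathbf{d}_N,\mu,\varepsilon)\leq \log \#(\mathcal{X},\mathbf{d}_N,\varepsilon)$ but with a weighted twist provided by Gibbs' inequality, so that the potential term $\int \varphi\,d\mu$ comes out for free. I will spell out the bound for $\overline{\rdim}$ and $\overline{\mdim}_{\mathrm{M}}$; the lower counterpart is literally identical with $\liminf$ in place of $\limsup$.

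Fix $\mu\in\mathscr{M}^T(\mathcal{X})$, $N\geq 1$, and $\varepsilon>0$. Let $\mathcal{X}=U_1\cup\cdots\cup U_n$ be an open cover with $\diam_{\mathbf{d}_N} U_i<\varepsilon$ which approximates the infimum in (\ref{eq: covering number with potential}) for $(\mathbf{d}_N,S_N\varphi,\varepsilon)$, and set $s_i=\sup_{U_i}S_N\varphi$. Refine to a Borel partition $V_i:=U_i\setminus(U_1\cup\cdots\cup U_{i-1})$; then $\diam V_i<\varepsilon$ and $\sup_{V_i}S_N\varphi\leq s_i$. Put $p_i=\mu(V_i)$, pick $y_i\in V_i$ when $V_i\neq\emptyset$, and define $Y$ by $Y=y_i$ on $\{X\in V_i\}$. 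Then $\mathbf{d}_N(X,Y)<\varepsilon$ almost surely, so $(X,Y)$ is admissible for $R(\mathbf{d}_N,\mu,\varepsilon)$, and since $Y$ is a deterministic function of $X$,
\begin{equation*}
 R(\mathbf{d}_N,\mu,\varepsilon)\;\leq\; I(X;Y)\;=\;H(Y)\;=\;-\sum_{i} p_i\log p_i.
\end{equation*}

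Now apply Gibbs' inequality with the positive weights $q_i=(1/\varepsilon)^{s_i}$:
\begin{equation*}
 -\sum_i p_i\log p_i \;\leq\; \log\!\Big(\sum_i q_i\Big) \;-\; \sum_i p_i\log q_i
 \;=\; \log\!\Big(\sum_i(1/\varepsilon)^{s_i}\Big) \;-\; \log(1/\varepsilon)\sum_i p_i s_i.
\end{equation*}
Because $\int_{V_i} S_N\varphi\,d\mu \leq p_i\sup_{V_i}S_N\varphi\leq p_i s_i$, summation over $i$ together with $T$-invariance of $\mu$ gives $\sum_i p_i s_i \geq \int_{\mathcal{X}} S_N\varphi\,d\mu = N\int_{\mathcal{X}}\varphi\,d\mu$. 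Combining this with the previous inequalities and letting the cover tend to the infimum,
\begin{equation*}
 R(\mathbf{d}_N,\mu,\varepsilon) \;+\; N\log(1/\varepsilon)\int_{\mathcal{X}}\varphi\,d\mu \;\leq\; \log\#(\mathcal{X},\mathbf{d}_N,S_N\varphi,\varepsilon).
\end{equation*}
Divide by $N$ and pass to $N\to\infty$, using the standard identity $R(\mathbf{d},\mu,\varepsilon)=\lim_N R(\mathbf{d}_N,\mu,\varepsilon)/N$ that will be recorded in \S\ref{subsection: rate distortion theory}, to get
\begin{equation*}
 R(\mathbf{d},\mu,\varepsilon) + \log(1/\varepsilon)\int_{\mathcal{X}}\varphi\,d\mu \;\leq\; P(\mathcal{X},T,\mathbf{d},\varphi,\varepsilon).
\end{equation*}
Dividing by $\log(1/\varepsilon)$ and taking $\limsup_{\varepsilon\to 0}$ (resp.\ $\liminf$) yields the two inequalities claimed in the proposition.

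There is no real obstacle: the whole proof is a single invocation of Gibbs' inequality paired with the obvious ``sup dominates average'' estimate $p_i s_i\geq \int_{V_i}S_N\varphi\,d\mu$. The only minor care required is the refinement from open cover to Borel partition (so $H(Y)$ and $\int S_N\varphi\,d\mu$ split cleanly over disjoint pieces) and the bookkeeping of the two-limit passage $N\to\infty$ and then $\varepsilon\to 0$.
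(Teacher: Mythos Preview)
Your proof is correct and follows essentially the same route as the paper's: refine the cover to a partition, bound $I(X;Y)\le H(Y)$, apply Gibbs' inequality (this is exactly the paper's Lemma~\ref{lemma: KL divergence is nonnegative}), and use $\sum_i p_i s_i \ge \int S_N\varphi\,d\mu$. The only cosmetic difference is that the paper builds $Y$ directly as the orbit segment $(x_i,Tx_i,\dots,T^{N-1}x_i)\in\mathcal{X}^N$ and plugs it straight into the definition of $R(\mathbf{d},\mu,\varepsilon)$, so it never needs the intermediate quantity $R(\mathbf{d}_N,\mu,\varepsilon)$ or the ``standard identity'' you invoke---which, incidentally, is not recorded in \S\ref{subsection: rate distortion theory}; but since you only need the trivial inequality $R(\mathbf{d},\mu,\varepsilon)\le H(Y)/N$ (obtained by sending $y_i\mapsto (y_i,Ty_i,\dots,T^{N-1}y_i)$), this is harmless.
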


From the above three results (with a minor consideration on the tame growth of covering numbers condition\footnote{The inequality
(\ref{eq: mean dimension and rate distortion dimension})
holds for all metrics $\mathbf{d}$; see \S \ref{subsection: proof of Corollary of dynamical Frostman's lemma}.})

\begin{corollary}[$=$ Corollary \ref{corollary: mean dimension, rate distortion dimension and metric mean dimension restated}]
\label{corollary: mean dimension, rate distortion dimension and metric mean dimension}
\begin{equation} \label{eq: mean dimension and rate distortion dimension}
  \begin{split}
    \mdim(\mathcal{X}, T, \varphi) & \leq  \sup_{\mu\in \mathscr{M}^T(\mathcal{X})}
    \left(\underline{\rdim}(\mathcal{X},T, \mathbf{d}, \mu) + \int_{\mathcal{X}} \varphi \, d\mu\right)  \\
   &  \leq  \sup_{\mu\in \mathscr{M}^T(\mathcal{X})} \left(\overline{\rdim}(\mathcal{X},T,\mathbf{d}, \mu) +
    \int_{\mathcal{X}} \varphi \, d\mu\right)
   \leq \overline{\mdim}_{\mathrm{M}}(\mathcal{X}, T, \mathbf{d}, \varphi).   
  \end{split} 
\end{equation}
\end{corollary}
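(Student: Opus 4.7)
My plan is to chain Theorems \ref{theorem: mean Hausdorff dimension bounds mean dimension} and \ref{theorem: dynamical Frostman's lemma} with Proposition \ref{prop: metric mean dimension bounds rate distortion dimension}, and then remove the tame-growth proviso attached to the middle ingredient.

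First I would dispose of the two easy inequalities. The middle inequality is immediate from the pointwise bound $\underline{\rdim}\leq \overline{\rdim}$ built into (\ref{eq: rate distortion dimension}). The rightmost inequality is obtained by taking the supremum over $\mu\in \mathscr{M}^T(\mathcal{X})$ in the first line of Proposition \ref{prop: metric mean dimension bounds rate distortion dimension}, since $\overline{\mdim}_{\mathrm{M}}(\mathcal{X},T,\mathbf{d},\varphi)$ has no $\mu$-dependence.

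For the leftmost inequality, if $\mathbf{d}$ already satisfies tame growth of covering numbers I would simply compose the two stated theorems:
\[
\mdim(\mathcal{X},T,\varphi) \leq \mdim_{\mathrm{H}}(\mathcal{X},T,\mathbf{d},\varphi) \leq \sup_{\mu\in \mathscr{M}^T(\mathcal{X})}\left(\underline{\rdim}(\mathcal{X},T,\mathbf{d},\mu) + \int_{\mathcal{X}} \varphi\, d\mu\right).
\]

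The main (in fact the only) obstacle is extending this bound to an \emph{arbitrary} compatible metric $\mathbf{d}$, since $\mdim(\mathcal{X},T,\varphi)$ is metric-free whereas Theorem \ref{theorem: dynamical Frostman's lemma} requires tame growth. The strategy I would pursue is a monotonicity argument: given $\mathbf{d}\in \mathscr{D}(\mathcal{X})$, produce an auxiliary compatible metric $\mathbf{d}'\in \mathscr{D}(\mathcal{X})$ with $\mathbf{d}'\leq \mathbf{d}$ pointwise and satisfying tame growth, for instance by pulling back a reference metric on the Hilbert cube (of the type used in the dynamical Pontrjagin--Schnirelmann theorem recalled in \S \ref{subsection: main ingredients of the proof}) via a topological embedding and rescaling it to be dominated by $\mathbf{d}$. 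Because $\mathbf{d}'\leq \mathbf{d}$, every code achieving $\mathbf{d}$-distortion below $\varepsilon$ automatically achieves $\mathbf{d}'$-distortion below $\varepsilon$, hence $R(\mathbf{d}',\mu,\varepsilon)\leq R(\mathbf{d},\mu,\varepsilon)$ and in particular $\underline{\rdim}(\mathcal{X},T,\mathbf{d}',\mu)\leq \underline{\rdim}(\mathcal{X},T,\mathbf{d},\mu)$ for every $\mu\in \mathscr{M}^T(\mathcal{X})$. Applying the tame-growth case to $\mathbf{d}'$ and then invoking this monotonicity together with the metric-independence of $\mdim(\mathcal{X},T,\varphi)$ yields the leftmost inequality for $\mathbf{d}$. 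Producing a dominated tame-growth metric is the one non-routine step, and it is presumably the content that the author defers to \S \ref{subsection: proof of Corollary of dynamical Frostman's lemma}.
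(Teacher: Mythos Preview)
Your argument is correct and follows the same route as the paper: the rightmost inequality from Proposition~\ref{prop: metric mean dimension bounds rate distortion dimension}, the leftmost by passing to a dominated tame-growth metric $\mathbf{d}'\leq \mathbf{d}$, applying Theorems~\ref{theorem: mean Hausdorff dimension bounds mean dimension} and~\ref{theorem: dynamical Frostman's lemma} to $\mathbf{d}'$, and then using monotonicity of $\underline{\rdim}$ in the metric.

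One small caveat on your suggested construction: ``rescaling'' a Hilbert-cube pullback by a constant will not in general produce a metric dominated by $\mathbf{d}$, since the ratio of two compatible metrics need not be bounded. The paper instead uses Lemma~\ref{lemma: tame growth is mild}, which builds $\mathbf{d}'$ directly from $\mathbf{d}$ via
\[
\mathbf{d}'(x,y)=\sum_{n\geq 1}2^{-n}\left|\mathbf{d}(x,x_n)-\mathbf{d}(y,x_n)\right|
\]
for a dense sequence $\{x_n\}$; this is automatically $\leq \mathbf{d}$ by the triangle inequality and has tame growth. (If you insist on the Hilbert-cube picture, this is exactly the pullback along the $1$-Lipschitz embedding $x\mapsto(\mathbf{d}(x,x_n))_n$, so the idea is salvageable---it is the embedding, not the metric, that must be adapted to $\mathbf{d}$.)
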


Now the following version of 
``dynamical Pontrjagin--Schnirelamann's theorem'' establishes Theorem \ref{main theorem}.

\begin{theorem}[$\subset$ Theorem \ref{theorem: dynamical PS theorem refined}] \label{theorem: dynamical PS theorem}
If $(\mathcal{X}, T)$ has the marker property then there exists a metric $\mathbf{d}\in \mathscr{D}(\mathcal{X})$
satisfying 
\[  \mdim(\mathcal{X}, T, \varphi) = \overline{\mdim}_{\mathrm{M}}(\mathcal{X},T,\mathbf{d},\varphi). \]
\end{theorem}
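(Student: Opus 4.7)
The bound $\mdim(\mathcal{X}, T, \varphi) \leq \overline{\mdim}_{\mathrm{M}}(\mathcal{X}, T, \mathbf{d}, \varphi)$ is valid for every $\mathbf{d} \in \mathscr{D}(\mathcal{X})$, since Theorem \ref{theorem: mean Hausdorff dimension bounds mean dimension} already gives $\mdim \leq \mdim_{\mathrm{H}} \leq \underline{\mdim}_{\mathrm{M}} \leq \overline{\mdim}_{\mathrm{M}}$. So the content of the theorem is the construction of a single metric $\mathbf{d}$ realizing the reverse inequality. Write $\rho := \mdim(\mathcal{X}, T, \varphi)$; the plan is to adapt the dynamical Pontrjagin--Schnirelmann construction of \cite{Lindenstrauss--Tsukamoto double VP} by inserting the potential at each stage.

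The building blocks come straight from the definition of $\rho$. Fix any auxiliary $\mathbf{d}^{(0)} \in \mathscr{D}(\mathcal{X})$. For each $k \geq 1$ one may choose an integer $N_k$, a scale $\varepsilon_k > 0$ (decaying to $0$ as fast as later needed), a simplicial complex $P_k$, and an $\varepsilon_k$-embedding $f_k : (\mathcal{X}, \mathbf{d}^{(0)}_{N_k}) \to P_k$ with the pointwise bound
\[
\frac{1}{N_k}\bigl(\dim_{f_k(x)} P_k + S_{N_k}\varphi(x)\bigr) \leq \rho + \frac{1}{k} \qquad (\forall x \in \mathcal{X}).
\]
Next I invoke the marker property to promote each $f_k$ to a $T$-sensitive map. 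Pick an open set $U_k$ with $U_k \cap T^n U_k = \emptyset$ for $1 \leq n \leq 3N_k$ and $\mathcal{X} = \bigcup_{n \in \mathbb{Z}} T^n U_k$. Using $U_k$ I cut the orbit of each point into consecutive blocks of length comparable to $N_k$, apply $f_k$ on each block (evaluated at the appropriate $T$-translate), and glue the pieces (with a partition of unity near block boundaries) into a continuous map $F_k : \mathcal{X} \to P_k^{\mathbb{Z}}$ whose $n$-th coordinate approximately records $f_k(T^n x)$ on a window around $n$.

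The desired metric is then the weighted series
\[
\mathbf{d}(x,y) = \mathbf{d}^{(0)}(x,y) + \sum_{k=1}^{\infty} a_k\, \delta_k\bigl(F_k(x), F_k(y)\bigr),
\]
where $\delta_k$ is a simplicial metric on $P_k^{\mathbb{Z}}$ (summable in the coordinate direction) and the coefficients $a_k > 0$ decay fast enough for compatibility with the topology of $\mathcal{X}$. The key computation is to bound $\#(\mathcal{X}, \mathbf{d}_N, S_N\varphi, \varepsilon)$ for $\varepsilon$ comparable to $a_k \varepsilon_k$ and $N \gg N_k$: one covers $\mathcal{X}$ by preimages under $F_k$ of $\varepsilon$-cubes in $P_k^{\{0,1,\dots,N-1+O(N_k)\}}$. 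The marker structure breaks such a cube into roughly $N/N_k$ approximately independent time-windows, each contributing, through the definition (\ref{eq: covering number with potential}), a weighted factor of the form $(1/\varepsilon)^{\dim_{f_k(x)}P_k + S_{N_k}\varphi(x)}$. Multiplying the blocks and taking $\log$ should yield
\[
\frac{\log \#(\mathcal{X}, \mathbf{d}_N, S_N\varphi, \varepsilon)}{N \log(1/\varepsilon)} \leq \rho + \frac{1}{k} + o(1),
\]
so that $\overline{\mdim}_{\mathrm{M}}(\mathcal{X}, T, \mathbf{d}, \varphi) \leq \rho + 1/k$; letting $k \to \infty$ then closes the argument.

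The main obstacle is this last estimate. It demands showing that the potential-weighted count $\sum_i (1/\varepsilon)^{\sup_{U_i}\varphi}$ factors multiplicatively across the disjoint time windows cut out by the markers, and simultaneously that the local exponent $\sup_{U_i}\varphi$ on each window can be matched, up to a $o(N_k)$ error, to the Birkhoff average $S_{N_k}\varphi(x)$ appearing in the bound on $f_k$. Reconciling these two \emph{localizations} of $\varphi$ -- one by open cover, one by orbit segment -- is the technical heart of the argument and is the precise point where the full strength of the marker property, rather than any weaker hypothesis, enters.
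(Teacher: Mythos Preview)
Your opening paragraph and the choice of building blocks are fine: the inequality $\mdim \leq \overline{\mdim}_{\mathrm{M}}$ holds for every metric by Theorem \ref{theorem: mean Hausdorff dimension bounds mean dimension}, and the data $(N_k,\varepsilon_k,P_k,f_k)$ with the pointwise bound $\dim_{f_k(x)}P_k+S_{N_k}\varphi(x)\leq (\rho+\tfrac1k)N_k$ are indeed what one must start from. The real issue is the architecture you choose for $\mathbf{d}$.

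Your metric is an \emph{additive} superposition $\mathbf{d}=\mathbf{d}^{(0)}+\sum_k a_k\,\delta_k(F_k(\cdot),F_k(\cdot))$ of independently constructed layers. To bound $\#(\mathcal{X},\mathbf{d}_N,S_N\varphi,\varepsilon)$ you propose to cover $\mathcal{X}$ by $F_k$-preimages of $\varepsilon$-cubes in $P_k^{[0,N)}$. But such a preimage has $\mathbf{d}_N$-diameter $<\varepsilon$ only if it is simultaneously small in \emph{every} summand, i.e.\ in $\mathbf{d}^{(0)}_N$ and in each $a_j\delta_j(F_j(\cdot),F_j(\cdot))$ for $j<k$. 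Nothing in your construction forces this: the $F_j$ are built independently, and an $F_k$-preimage is only guaranteed to be $\varepsilon_k$-small in $\mathbf{d}^{(0)}_N$ (from the $\varepsilon_k$-embedding property of $f_k$), which is far coarser than $\varepsilon$. If you refine further so as to be $\varepsilon$-small in the earlier layers, you multiply the count by a factor you have no control over; the exponent of $(1/\varepsilon)$ you pick up from level $j<k$ need not be bounded by $\rho+O(1/j)$ at the much finer scale $\varepsilon\ll a_j$. This is not the ``localization of $\varphi$'' problem you flag in your last paragraph; it is a structural incompatibility between the levels that is already present when $\varphi=0$.

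The paper resolves this by abandoning the sum-of-layers model entirely. It builds a \emph{single} topological embedding $f':\mathcal{X}\to V$ into a Banach space as a uniform limit $f'=\lim f_n$, and takes $\mathbf{d}=(f')^*\norm{\cdot}$. The crucial point is that $f_{n+1}$ is obtained from $f_n$ by composing with simplicial maps $h_\lambda:P_{n+1}\to\overline{P_n*Q_n}$ and then perturbing slightly; this forces the covering at level $n+1$ to \emph{refine} the one at level $n$. Concretely, conditions (2) and (3) of the inductive scheme guarantee that for every $n$ the weighted covering number is controlled both on $(0,\varepsilon_{n+1}]$ (by the fresh $P_{n+1}$-data) and on $(\varepsilon_{n+1},\varepsilon_n]$ (by compatibility with $P_n$), so that at any scale $\varepsilon$ a single level suffices. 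The marker property enters not via a bare open set but through the continuous Voronoi tiling $\mathbb{R}=\bigcup_a I_{\psi_n}(x,a)$, which provides the equivariant gluing of $\pi_n,\pi'_n$ into the map $q_n$; this is what makes the inter-level simplicial maps $h_\lambda$ well-defined. In short, the missing idea in your sketch is not a sharper estimate at the end, but the inductive compatibility between scales that the single-embedding construction supplies.
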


\begin{proof}[Proof of Theorem \ref{main theorem}]
The inequality (\ref{eq: mean dimension and rate distortion dimension}) holds for all metrics $\mathbf{d}$.
On the other hand, from Theorem \ref{theorem: dynamical PS theorem}, we can choose a metric $\mathbf{d}$
satisfying $\mdim(\mathcal{X},T,\varphi) = \overline{\mdim}_{\mathrm{M}}(\mathcal{X},T,\mathbf{d},\varphi)$.
Then, for this metric, we have 
 \begin{equation*}
  \begin{split}
    \mdim(\mathcal{X}, T, \varphi) & =  \sup_{\mu\in \mathscr{M}^T(\mathcal{X})}
    \left(\underline{\rdim}(\mathcal{X},T, \mathbf{d}, \mu) + \int_{\mathcal{X}} \varphi \, d\mu\right)  \\
   &  =  \sup_{\mu\in \mathscr{M}^T(\mathcal{X})} \left(\overline{\rdim}(\mathcal{X},T,\mathbf{d}, \mu) +
    \int_{\mathcal{X}} \varphi \, d\mu\right).
  \end{split} 
\end{equation*}
This proves Theorem \ref{main theorem}.
\end{proof}

We emphasize that only Theorem \ref{theorem: dynamical PS theorem} requires the marker property assumption.
Theorem \ref{theorem: mean Hausdorff dimension bounds mean dimension}, Theorem \ref{theorem: dynamical Frostman's lemma},
Proposition \ref{prop: metric mean dimension bounds rate distortion dimension} 
and Corollary \ref{corollary: mean dimension, rate distortion dimension and metric mean dimension}
hold for all dynamical systems.

\subsection{Future directions}  \label{subsection: future directions}

 This paper is devoted to the general theory of mean dimension with potential.
However our primary motivation is \textit{not} to develop the abstract theory.
Hopefully the theory of mean dimension with potential will shed a new light on the 
study of concrete examples as the topological pressure theory plays a crucial role 
in hyperbolic dynamics \cite{Bowen75}.
Here we briefly describe a possibility of such directions.

Let $\mathbb{C}P^N$ be the complex projective space with the Fubini--Study metric.
A holomorphic map $f:\mathbb{C}\to \mathbb{C}P^N$ is called a \textbf{Brody curve}
if it is one-Lipschitz.
This means that $f=[f_0:f_1:\dots:f_N]$ satisfies 
\[ |df|^2(z) := \frac{1}{4\pi} \left(\frac{\partial^2}{\partial x^2} + \frac{\partial^2}{\partial y^2}\right)
    \log \left(|f_0|^2+|f_1|^2+\dots+|f_N|^2\right) \leq 1. \]
Here $z=x+y\sqrt{-1}$ is the standard coordinate of $\mathbb{C}$.
Let $\mathcal{X}$ be the space of Bordy curves $f:\mathbb{C}\to \mathbb{C}P^N$ with the compact-open topology.
This is a compact metrizable space and the group $\mathbb{C}$ naturally acts on it:
\[ \mathbb{C}\times \mathcal{X}\to \mathcal{X}, \quad 
    (a, f(z))\mapsto f(z+a). \]
We denote the mean dimension of this action by $\mdim(\mathcal{X},\mathbb{C})$.

Gromov \cite[p.396 (c)]{Gromov} proposed the problem of estimating $\mdim(\mathcal{X},\mathbb{C})$.
The paper \cite{Tsukamoto Brody} found the following exact formula of $\mdim(\mathcal{X},\mathbb{C})$.
(The description below looks different from the formulation in \cite{Tsukamoto Brody}, but they
are equivalent.)
Let $\mathscr{M}^\mathbb{C}(\mathcal{X})$ be the set of Borel probability measures on $\mathcal{X}$
invariant under the $\mathbb{C}$-action.
Define a continuous function $\varphi:\mathcal{X}\to \mathbb{R}$ by 
\[  \varphi(f) = \frac{2(N+1)}{\pi} \int_{|z|<1} |df|^2 \, dxdy. \]
Then the mean dimension is given by 
\begin{equation}  \label{eq: mean dimension formula for Brody curves}
  \mdim(\mathcal{X},\mathbb{C}) = \sup_{\mu\in \mathscr{M}^{\mathbb{C}}(\mathcal{X})} \int_{\mathcal{X}} \varphi \, d\mu. 
\end{equation}
This formula looks mysterious.
Why is the mean dimension connected to the supremum of certain integral?
It seems that a deeper ergodic theoretic phenomena is hidden behind the formula.

We have been seeking a framework for understanding the formula (\ref{eq: mean dimension formula for Brody curves})
better.
Hopefully the theory of mean dimension with potential will provide 
such a framework\footnote{Although the present paper develops
the theory only for $\mathbb{Z}$-actions, we believe that 
everything can be generalized to $\mathbb{Z}^k$ or $\mathbb{R}^k$-actions without any significant difficulties.
We also would like to point out that 
the proof of the formula (\ref{eq: mean dimension formula for Brody curves}) in \cite{Tsukamoto Brody}
deeply uses metric structure (in particular, metric mean dimension). This is another indication that 
metric measure structure will be important in the future of the study of mean dimension.}.
(Notice that the right-hand side of the double variational principle (Theorem \ref{main theorem})
contains the same integral.)
We plan to study this direction in a future paper.

\subsection{Organization of the paper}  \label{subsection: organization of the paper}

In \S \ref{section: information theoretic preliminaries}
we prepare basic of mutual information and rate distortion function. 
We prove Theorem \ref{theorem: mean Hausdorff dimension bounds mean dimension}
and Proposition \ref{prop: metric mean dimension bounds rate distortion dimension}
in \S \ref{section: mean Hausdorff dimension bounds mean dimension}.
We prove Theorem \ref{theorem: dynamical Frostman's lemma}
(a version of dynamical Frostman's lemma) and Corollary \ref{corollary: mean dimension, rate distortion dimension and metric mean dimension}
in \S \ref{section: dynamical Frostman's lemma}.
We prove Theorem \ref{theorem: dynamical PS theorem} (a version of dynamical Pontrjagin--Schnirelamann's theorem)
in \S \ref{section: dynamical PS theorem}.
The arguments of \S \ref{section: dynamical PS theorem} are technically heavy.

\section{Information theoretic preliminaries}  \label{section: information theoretic preliminaries}

\subsection{Mutual information}  \label{subsection: mutual information}

Here we gather basic definitions and results on mutual information \cite[Chapter 2]{Cover--Thomas}.
We omit most of the proofs, which can be found in \cite[Section 2.2]{Lindenstrauss--Tsukamoto double VP}.
Throughout this subsection we fix a probability space $(\Omega, \mathbb{P})$ and assume that 
all random variables are defined on it.

Let $\mathcal{X}$ and $\mathcal{Y}$ be measurable spaces, and let $X$ and $Y$ be random variables taking values in 
$\mathcal{X}$ and $\mathcal{Y}$ respectively.
We want to define their \textbf{mutual information} $I(X;Y)$, which estimates the amount of information shared by $X$ and $Y$.

\textbf{Case 1:} Suppose $\mathcal{X}$ and $\mathcal{Y}$ are finite sets.
(We always assume that the $\sigma$-algebras of finite sets are the sets of all subsets.)
Then we define 
\begin{equation}  \label{eq: definition of mutual information}
   I(X;Y) = H(X) + H(Y) -H(X,Y) = H(X) - H(X|Y). 
\end{equation}   
More explicitly
\[
    I(X;Y) = \sum_{x\in \mathcal{X}, y\in \mathcal{Y}} \mathbb{P}(X=x, Y=y) \log \frac{\mathbb{P}(X=x, Y=y)}{\mathbb{P}(X=x) \mathbb{P}(Y=y)}.  
    \]   
Here we use the convention that $0\log(0/a)=0$ for all $a\geq 0$.

\textbf{Case 2:} In general, take measurable maps $f:\mathcal{X}\to A$ and $g:\mathcal{Y}\to B$ into 
finite sets $A$ and $B$. 
Then we can consider $I(f\circ X; g\circ Y)$ defined by Case 1.
We define $I(X;Y)$ as the supremum of $I(f\circ X; g\circ Y)$ over all finite-range measurable maps 
$f$ and $g$ defined on $\mathcal{X}$ and $\mathcal{Y}$.
This definition is compatible with Case 1 when $\mathcal{X}$ and $\mathcal{Y}$ are finite sets.

\begin{example}
  Let $X$ and $Z$ be real-valued independent random variables.
  Assume that they are Gaussian and obeying 
  \[  X\sim N(a_1, v_1), \quad Z\sim N(a_2, v_2). \]
  Set $Y=X+Z\sim N(a_1+a_2, v_1+v_2)$. Then \cite[Chapter 9, Section 1]{Cover--Thomas}
  \[  I(X;Y) = \frac{1}{2} \log\left(1+\frac{v_1}{v_2}\right). \]
\end{example}

\begin{lemma}[Data-Processing inequality]  \label{lemma: data-processing inequality}
Let $X$ and $Y$ be random variables taking values in measurable spaces $\mathcal{X}$ and $\mathcal{Y}$ respectively.
If $f:\mathcal{Y}\to \mathcal{Z}$ is a measurable map then $I(X; f(Y)) \leq I(X;Y)$.
\end{lemma}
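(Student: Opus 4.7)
The plan is to reduce the general statement directly to the definition of mutual information as a supremum over finite-range measurable maps (Case 2). The key observation is that every finite-range measurable map on $\mathcal{Z}$, precomposed with $f$, yields a finite-range measurable map on $\mathcal{Y}$, so the supremum defining $I(X;f(Y))$ is taken over a subfamily of the supremum defining $I(X;Y)$.

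Concretely, I would fix arbitrary finite-range measurable maps $\phi:\mathcal{X}\to A$ and $\psi:\mathcal{Z}\to B$ into finite sets $A$ and $B$. Since $f:\mathcal{Y}\to\mathcal{Z}$ is measurable, the composition $\psi\circ f:\mathcal{Y}\to B$ is again a finite-range measurable map. Therefore, applying the definition of $I(X;Y)$ in Case 2 to the pair $(\phi,\psi\circ f)$, one obtains
\[ I(\phi\circ X;\psi\circ f\circ Y) \leq I(X;Y). \]
Taking the supremum over all such $\phi$ and $\psi$, the left-hand side becomes $I(X;f(Y))$ by definition, and the inequality $I(X;f(Y))\leq I(X;Y)$ follows.

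The main subtlety, if one calls it an obstacle at all, is bookkeeping: one must check that the class of maps of the form $\psi\circ f$, with $\psi$ ranging over finite-range measurable maps on $\mathcal{Z}$, is a subset (not all) of the finite-range measurable maps on $\mathcal{Y}$, so that the supremum defining $I(X;f(Y))$ is bounded above by the larger supremum defining $I(X;Y)$. No invocation of the finite-case formula $I(X;Y)=H(X)-H(X|Y)$ is needed for this argument, although an alternative proof route would first reduce to the finite case by the same supremum argument and then conclude by the standard entropy inequality $H(X\mid Y)\leq H(X\mid f(Y))$, which follows from Jensen's inequality applied to the concave function $-t\log t$.
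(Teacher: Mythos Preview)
Your proof is correct and is precisely the argument the paper has in mind: the paper's own proof is the single sentence ``This immediately follows from the definition,'' and you have spelled out exactly how---the supremum defining $I(X;f(Y))$ runs over pairs $(\phi,\psi\circ f)$, which is a subfamily of the pairs $(\phi,\chi)$ defining $I(X;Y)$.
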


\begin{proof}
This immediately follows from the definition.
\end{proof}

\begin{lemma}  \label{lemma: convergence of mutual information}
Let $\mathcal{X}$ and $\mathcal{Y}$ be finite sets and let $(X_n,Y_n)$ be a sequence of 
random variables taking values in $\mathcal{X}\times \mathcal{Y}$.
If $(X_n, Y_n)$ converges to some $(X, Y)$ in law, then $I(X_n;Y_n)$ converges to $I(X;Y)$.
\end{lemma}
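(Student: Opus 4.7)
The plan is to reduce the statement to continuity of Shannon entropy on the finite-dimensional simplex of probability vectors. By the identity (\ref{eq: definition of mutual information}) we have
\[
  I(X_n; Y_n) = H(X_n) + H(Y_n) - H(X_n, Y_n),
\]
so it suffices to show that each of the three entropy terms converges to its counterpart for $(X, Y)$. This decomposition is more convenient than working directly with the explicit formula for $I(X;Y)$ as a sum of $p \log(p/(qr))$ terms, since the latter involves logarithms of marginals in the denominator that could vanish in the limit.

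Because $\mathcal{X}$ and $\mathcal{Y}$ are finite, convergence in law of $(X_n, Y_n)$ to $(X, Y)$ is equivalent to pointwise convergence of the joint distribution: $\mathbb{P}(X_n = x, Y_n = y) \to \mathbb{P}(X = x, Y = y)$ for every $(x, y) \in \mathcal{X} \times \mathcal{Y}$. Summing over $y$ (respectively $x$) yields pointwise convergence of the marginal distributions of $X_n$ and $Y_n$ to those of $X$ and $Y$ as well.

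I then invoke continuity of the function $\eta(p) = -p \log p$, extended by $\eta(0) = 0$, on the interval $[0,1]$; this extension is exactly what justifies the convention $0 \log 0 = 0$. Each of $H(X_n)$, $H(Y_n)$, $H(X_n, Y_n)$ is a finite sum of values of $\eta$ applied to the corresponding probabilities, hence a continuous function of the underlying probability vector. Combined with the convergence of probabilities from the previous paragraph, this gives $H(X_n) \to H(X)$, $H(Y_n) \to H(Y)$, and $H(X_n, Y_n) \to H(X, Y)$, and the desired conclusion follows by linearity. There is no genuine obstacle here; the only point requiring care is at probabilities that degenerate to zero in the limit, where one must use continuity of $\eta$ at the endpoint rather than the ratio form of mutual information.
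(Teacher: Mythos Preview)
Your proof is correct and takes essentially the same approach as the paper, which simply cites the identity $I(X;Y) = H(X) + H(Y) - H(X,Y)$ and leaves the continuity of entropy on the finite simplex implicit. You have merely filled in the details the paper omits.
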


\begin{proof}
This follows from (\ref{eq: definition of mutual information}).
\end{proof}

The next three lemmas are crucial in the proof of Theorem \ref{theorem: dynamical Frostman's lemma}
(dynamical Frostman's lemma).
The proofs are given in \cite[Lemmas 2.7, 2.8, 2.10]{Lindenstrauss--Tsukamoto double VP}.

\begin{lemma}[Subadditivity of mutual information]  \label{lemma: subadditivity of mutual information}
Let $X, Y, Z$ be random variables taking values in finite sets $\mathcal{X}, \mathcal{Y}, \mathcal{Z}$ respectively.
Suppose $X$ and $Y$ are conditionally independent given $Z$. Namely for every $z\in \mathcal{Z}$ with $\mathbb{P}(Z=z)\neq 0$
\[  \mathbb{P}(X=x, Y=y|Z=z) = \mathbb{P}(X=x|Z=z) \mathbb{P}(Y=y|Z=z). \]
Then $I(X, Y;Z) \leq I(X;Z) + I(Y;Z)$.
\end{lemma}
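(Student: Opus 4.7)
The plan is to reduce the desired inequality to the elementary unconditional subadditivity of Shannon entropy, by using the conditional independence hypothesis to split the joint conditional entropy $H(X,Y\mid Z)$. The workhorse identity is the entropy representation of mutual information, $I(U;V) = H(U) - H(U \mid V)$, which for finite-valued $U,V$ is immediate from the definition of $I$ in Case 1.

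First I would record the conditional-entropy consequence of the hypothesis: because $X$ and $Y$ are conditionally independent given $Z$, one has
\[ H(X,Y \mid Z) = H(X \mid Z) + H(Y \mid Z). \]
This is a routine unfolding of the definition of conditional entropy against the product formula for the conditional distribution on each fiber $\{Z = z\}$ with $\mathbb{P}(Z=z) > 0$, followed by averaging against the law of $Z$; the values of $z$ with $\mathbb{P}(Z=z) = 0$ are absorbed by the convention $0\log 0 = 0$.

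Next I would expand
\[ I(X,Y;Z) = H(X,Y) - H(X,Y\mid Z), \]
substitute the previous identity for $H(X,Y\mid Z)$, and finally invoke the basic inequality $H(X,Y) \leq H(X) + H(Y)$ (equivalent to the non-negativity $I(X;Y) \geq 0$). Regrouping the four resulting entropy terms as $\bigl(H(X) - H(X\mid Z)\bigr) + \bigl(H(Y) - H(Y\mid Z)\bigr)$ yields $I(X;Z) + I(Y;Z)$, which is the claim.

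I do not anticipate any serious obstacle here: each ingredient -- the additivity of $H(\,\cdot\, \mid Z)$ under conditional independence, the entropy formula for $I$, and unconditional subadditivity of entropy -- is a standard fact for random variables on finite alphabets. The only bit of care required is the bookkeeping for $z$ with $\mathbb{P}(Z=z) = 0$, and this is handled uniformly once, at the very beginning, by the $0\log 0 = 0$ convention.
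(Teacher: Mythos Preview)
Your argument is correct and is the standard textbook proof: conditional independence gives $H(X,Y\mid Z)=H(X\mid Z)+H(Y\mid Z)$, then $I(X,Y;Z)=H(X,Y)-H(X,Y\mid Z)\le H(X)+H(Y)-H(X\mid Z)-H(Y\mid Z)=I(X;Z)+I(Y;Z)$. The paper itself does not give a proof of this lemma here; it merely cites \cite[Lemma~2.7]{Lindenstrauss--Tsukamoto double VP}, so there is no in-paper argument to compare against, but your approach is exactly the one used in that reference.
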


Let $X$ and $Y$ be random variables taking values in finite sets $\mathcal{X}$ and $\mathcal{Y}$.
We set $\mu(x) = \mathbb{P}(X=x)$ and $\nu(y|x) = \mathbb{P}(Y=y|X=x)$, where the latter is defined only for 
$x\in \mathcal{X}$ with $\mathbb{P}(X=x)\neq 0$.
The mutual information $I(X;Y)$ is determined by the distribution of $(X,Y)$, namely $\mu(x)\nu(y|x)$.
So we sometimes write $I(X;Y) = I(\mu, \nu)$.

\begin{lemma}[Concavity/convexity of mutual information]  \label{lemma: convexity of mutual information}
In this notation, $I(\mu, \nu)$ is a concave function of $\mu(x)$ and a convex function of $\nu(y|x)$.
Namely for $0\leq t\leq 1$
\begin{equation*}
   \begin{split}
     I\left((1-t)\mu_1+ t \mu_2, \nu\right) & \geq (1-t)I(\mu_1, \nu) + t I(\mu_2, \nu), \\
     I\left(\mu, (1-t)\nu_1+ t \nu_2\right)  &  \leq (1-t) I(\mu, \nu_1) + t I(\mu, \nu_2).   
   \end{split}
\end{equation*}
\end{lemma}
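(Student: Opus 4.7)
The plan is to derive both halves of the lemma from two standard decompositions of mutual information. Writing $p_Y(y) = \sum_x \mu(x)\nu(y|x)$ for the marginal distribution of $Y$, the definition (\ref{eq: definition of mutual information}) unfolds directly into the two identities
\begin{equation*}
  I(\mu,\nu) \;=\; H(p_Y) \,-\, \sum_{x} \mu(x)\, H\bigl(\nu(\cdot|x)\bigr)
   \;=\; \sum_{x} \mu(x)\, D\bigl(\nu(\cdot|x) \,\|\, p_Y\bigr),
\end{equation*}
where $H$ is Shannon entropy and $D$ is relative entropy. These two expressions will handle the two variables separately.

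For concavity in $\mu$ with $\nu$ fixed, I would use the first identity. The map $\mu\mapsto p_Y$ is affine, and Shannon entropy is a concave function on the probability simplex, being a sum of the concave functions $-t\log t$; hence $\mu\mapsto H(p_Y)$ is concave. On the other hand, $\mu\mapsto \sum_x \mu(x) H(\nu(\cdot|x))$ is linear in $\mu$. Therefore $I(\mu,\nu)$ is concave in $\mu$ as the difference of a concave function and a linear one.

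For convexity in $\nu$ with $\mu$ fixed, I would use the second identity. For each $x$ the maps $\nu\mapsto \nu(\cdot|x)$ and $\nu\mapsto p_Y$ are affine, so it is enough to know that the relative entropy $D(q\,\|\,r)$ is jointly convex in $(q,r)$. This in turn is the standard consequence of the log-sum inequality
\begin{equation*}
  \left(\sum_i a_i\right) \log \frac{\sum_i a_i}{\sum_i b_i} \;\leq\; \sum_i a_i \log \frac{a_i}{b_i},
\end{equation*}
applied with two terms weighted by $1-t$ and $t$, that is, with $(a_1,b_1)=\bigl((1-t)\nu_1(y|x), (1-t) p_Y^{\nu_1}(y)\bigr)$ and $(a_2,b_2)=\bigl(t\nu_2(y|x), t p_Y^{\nu_2}(y)\bigr)$. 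Summing the resulting pointwise inequality over $y$, and then over $x$ with weights $\mu(x)$, yields the desired bound.

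I do not anticipate any genuine obstacle, since both statements are classical facts in information theory with short, textbook proofs (cf.\ \cite[\S 2.7]{Cover--Thomas}). The only mild care needed is the bookkeeping of the conventions $0\log 0 = 0$ and $0\log(0/a)=0$ at the boundary of the simplex, which is harmless because both $-t\log t$ and the log-sum inequality extend continuously there.
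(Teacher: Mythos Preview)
Your proof is correct and follows the standard textbook argument (indeed, essentially the one in \cite[\S 2.7]{Cover--Thomas}). The paper itself does not give an independent proof of this lemma; it simply refers the reader to \cite[Lemma~2.8]{Lindenstrauss--Tsukamoto double VP}, so there is no alternative approach to compare against.
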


The following lemma is a key to connect geometric measure theory to rate distortion theory.
We learned this from \cite[Proposition 3.2]{Kawabata--Dembo}.

\begin{lemma}  \label{lemma: geometry and mutual information}
Let $\varepsilon$ and $\delta$ be positive numbers with $2\varepsilon \log(1/\varepsilon) \leq \delta$.
Let $0\leq \tau\leq \min(\varepsilon/3,\delta/2)$ and $s\geq 0$.
Let $(\mathcal{X}, \mathbf{d})$ be a compact metric space with a Borel probability measure $\mu$ satisfying 
\[  \mu(E) \leq \left(\tau + \diam E\right)^s, \quad \forall E\subset \mathcal{X} \text{ with } \diam E < \delta. \]
Let $X$ and $Y$ be random variables taking values in $\mathcal{X}$ with $\mathrm{Law}(X) = \mu$ and 
$\mathbb{E} \mathbf{d}(X, Y) < \varepsilon$.
Then 
\[  I(X;Y) \geq s \log(1/\varepsilon) - K(s+1). \]
Here $K$ is a universal positive constant independent of $\varepsilon, \delta, \tau, s, (\mathcal{X}, \mathbf{d}), \mu$.
\end{lemma}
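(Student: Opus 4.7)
The plan is to apply the data-processing inequality to reduce $I(X;Y)$ to a discrete mutual information $I(\tilde X;Y)$ for a quantization $\tilde X=\pi(X)$, and then combine the Frostman-type hypothesis with Markov's inequality on $\mathbf{d}(X,Y)$. Fix a Borel partition $\{P_i\}_{i=1}^N$ of $\mathcal{X}$ with $\diam P_i<2\varepsilon$, let $\pi:\mathcal{X}\to\{1,\ldots,N\}$ be the associated map, and set $\tilde X=\pi(X)$. By Lemma~\ref{lemma: data-processing inequality}, $I(X;Y)\ge I(\tilde X;Y)=H(\tilde X)-H(\tilde X\mid Y)$. Since $\tau\le\varepsilon/3$ and (for $\varepsilon$ small enough) $2\varepsilon<\delta$, the Frostman-type bound yields $\mu(P_i)\le(\tau+2\varepsilon)^s\le(3\varepsilon)^s$, so
\[
H(\tilde X)=-\sum_i\mu(P_i)\log\mu(P_i)\ge s\log\frac{1}{3\varepsilon}=s\log\frac{1}{\varepsilon}-s\log 3.
\]

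For the conditional entropy, I would introduce the tail indicator $W=\mathbf{1}\{\mathbf{d}(X,Y)\ge t\}$ with threshold $t=\varepsilon\log(1/\varepsilon)$. Markov's inequality gives $\mathbb{P}(W=1)<1/\log(1/\varepsilon)$, so $H(W)=O(1)$ and in fact tends to $0$ as $\varepsilon\to 0$. Conditionally on $W=0$ and $Y=y$, the variable $\tilde X$ takes values among cells meeting $B(y,t)$, whose union sits inside the ball $B(y,t+2\varepsilon)$---a set of diameter less than $\delta$ by the standing hypothesis $2\varepsilon\log(1/\varepsilon)\le\delta$---so the total $\mu$-mass of these cells is at most $(\tau+2(t+2\varepsilon))^s$ by Frostman.

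The main obstacle is that the Frostman hypothesis supplies no pointwise lower bound on individual cell measures $\mu(P_i)$, so the naive cardinality estimate $H(\tilde X\mid Y=y,W=0)\le\log\#\{i:P_i\cap B(y,t)\ne\emptyset\}$ is unavailable. The resolution is to pass to the relative-entropy formulation $I(\tilde X;Y)=\mathbb{E}_Y\,D(\nu_Y^{\tilde X}\,\|\,\pi_*\mu)$ and apply the Bernoulli contraction of the $N$-letter alphabet into ``near $y$'' versus ``far from $y$'', giving a pointwise lower bound of the form $D(\nu_y^{\tilde X}\,\|\,\pi_*\mu)\ge(1-\alpha_y)\log\bigl(1/\mu(B(y,t+2\varepsilon))\bigr)-O(1)$ with $\alpha_y=\mathbb{P}(W=1\mid Y=y)$. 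Jensen's inequality applied to the concave function $-\log$, combined with the Markov average $\mathbb{E}_Y\alpha_Y<1/\log(1/\varepsilon)$ and the Frostman bound evaluated at the mean distortion $\mathbb{E}\mathbf{d}(X,Y)<\varepsilon$ together with the hypothesis $\tau\le\varepsilon/3$, collapses all the error terms into a single universal $O(s+1)$. Combined with the lower bound on $H(\tilde X)$ from the first paragraph, this yields $I(X;Y)\ge s\log(1/\varepsilon)-K(s+1)$ for a universal constant $K$.
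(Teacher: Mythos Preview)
The paper itself does not prove this lemma; it cites \cite[Lemma~2.10]{Lindenstrauss--Tsukamoto double VP} and ultimately Kawabata--Dembo \cite[Proposition~3.2]{Kawabata--Dembo}. Your overall strategy---data processing plus a Bernoulli contraction around the event $\{\mathbf{d}(X,Y)<t\}$---is the right one, but the single-threshold version you carry out does not produce a \emph{universal} constant $K$.

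Concretely, with $t=\varepsilon\log(1/\varepsilon)$ your Bernoulli bound gives
\[
I(X;Y)\;\ge\;\bigl(1-\mathbb{E}_Y\alpha_Y\bigr)\, s\log\frac{1}{\tau+2(t+2\varepsilon)}\;-\;O(1)
\;\ge\;\Bigl(1-\tfrac{1}{\log(1/\varepsilon)}\Bigr)\,s\bigl(\log(1/\varepsilon)-\log\log(1/\varepsilon)-O(1)\bigr)-O(1),
\]
which is $s\log(1/\varepsilon)-s\log\log(1/\varepsilon)-O(s+1)$. The term $s\log\log(1/\varepsilon)$ is \emph{not} $O(s+1)$ uniformly in $\varepsilon$. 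Your last paragraph does not close this gap: $-\log$ is convex, not concave, and in any case the Frostman upper bound $\mu(B(y,r))\le(\tau+2r)^s$ is already uniform in $y$, so applying Jensen to the $y$-average buys nothing. The phrase ``Frostman bound evaluated at the mean distortion'' suggests taking $t$ of order $\varepsilon$; but then Markov only gives $\mathbb{P}(\mathbf{d}(X,Y)<t)\ge 1-1/C$, and you lose a fraction $1/C$ of the main term $s\log(1/\varepsilon)$, again not $O(s+1)$. No single threshold can satisfy both requirements simultaneously.

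The argument in Kawabata--Dembo (and \cite{Lindenstrauss--Tsukamoto double VP}) avoids this by going multi-scale: for each $y$, partition $\mathcal{X}$ into dyadic annuli $R_k^y=B(y,2^k\varepsilon)\setminus B(y,2^{k-1}\varepsilon)$ for $0\le k\le k_{\max}$ (with $2^{k_{\max}+1}\varepsilon<\delta$) together with the complement. Data processing against this partition, followed by the Frostman bound $\mu(B(y,2^k\varepsilon))\le(\tau+2^{k+1}\varepsilon)^s$, yields
\[
I(X;Y)\;\ge\; s\log(1/\varepsilon)\cdot\mathbb{P}(W\le k_{\max})\;-\;s\,\mathbb{E}[W]\;-\;H(W)\;-\;O(s),
\]
where $W=\bigl\lceil\log_2(\mathbf{d}(X,Y)/\varepsilon)\bigr\rceil_+\wedge(k_{\max}+1)$. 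Markov gives $\mathbb{P}(W\ge k)\le 2^{1-k}$, so $\mathbb{E}[W]\le 2$; hence $H(W)$ is bounded by a universal constant (the maximum entropy of a nonnegative integer variable with mean $2$). The hypothesis $2\varepsilon\log(1/\varepsilon)\le\delta$ ensures $\mathbb{P}(W>k_{\max})\le 2/\log(1/\varepsilon)$, so the first factor costs only $O(s)$. All errors are now genuinely $O(s+1)$.

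As a side remark, the preliminary discretization $\tilde X=\pi(X)$ and the lower bound on $H(\tilde X)$ are not needed: one can work directly with $I(X;Y)=\mathbb{E}_Y\,D(\mu_{X|Y=y}\,\|\,\mu)$ and apply data processing for KL divergence against the $y$-dependent partition above.
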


\subsection{Rate distortion theory}  \label{subsection: rate distortion theory}

We review the definition of rate distortion function here.
See \cite{Shannon, Shannon59} and \cite[Chapter 10]{Cover--Thomas} for more backgrounds.
For a stationary stochastic process $X_1, X_2, X_3, \dots$, its entropy 
\[  H\left(\{X_n\}\right) := \lim_{n\to \infty} \frac{H(X_1,X_2,\dots, X_n)}{n} \]
is equal to the expected number of bits per symbol for describing the process.
Therefore we can say that the Shannon entropy is the fundamental limit of \textit{lossless} data compression.
However if $X_n$ take \textit{continuously} many values, then the entropy is simply infinite.
Namely we cannot describe continuous variables perfectly within finitely many bits.
In this case, we have to consider \textit{lossy} data compression method achieving some \textit{distortion} constraint.
This is the primary object of rate distortion theory.
Rate distortion function is the fundamental limit of data compression in this theory.

Let $(\mathcal{X},T)$ be a dynamical system with a metric $\mathbf{d}$ and an invariant probability measure $\mu$.
For $\varepsilon >0$ we define the \textbf{rate distortion function} $R(\mathbf{d},\mu,\varepsilon)$ as the infimum of 
\[  \frac{I(X;Y)}{N}, \]
where $N$ runs over natural numbers, $X$ and $Y=(Y_0,\dots, Y_{N-1})$ are random variables defined on some 
probability space $(\Omega, \mathbb{P})$ such that all $X$ and $Y_n$ take values in $\mathcal{X}$ and satisfy 
\[  \mathrm{Law}(X) = \mu, \quad \mathbb{E}\left(\frac{1}{N} \sum_{n=0}^{N-1}\mathbf{d}(T^n X, Y_n)\right) < \varepsilon. \]
We define the upper and lower rate distortion dimensions by (\ref{eq: rate distortion dimension}) in 
\S \ref{subsection: backgrounds}.

The rate distortion function $R(\mathbf{d},\mu,\varepsilon)$
is the minimum rate when we quantize the process $\{T^n X\}_{n\in \mathbb{Z}}$ within the average distortion bounded by 
$\varepsilon$ with respect to $\mathbf{d}$.
See \cite[Chapter 11]{Gray} and \cite{LDN, ECG} for the precise meaning of this statement.

\begin{remark} \label{remark: rate distortion function}
In the above definition we can restrict $Y$ to be a finite-range random variable.
(``Finite range'' means that its distribution is supported in a finite subset of $\mathcal{X}$.)
Indeed, take a finite partition $\mathcal{P}$ of $\mathcal{X}$ and pick $x_P\in P$ for each $P\in \mathcal{P}$.
Define $f:\mathcal{X} \to \mathcal{X}$ by $f(P) = \{x_P\}$.
Set $Z= (Z_0, \dots, Z_{N-1}) = \left(f(Y_0), \dots, f(Y_{N-1})\right)$.
If $\mathcal{P}$ is sufficiently fine then 
\[ \mathbb{E}\left(\frac{1}{N}\sum_{n=0}^{N-1} \mathbf{d}(T^n X, Z_n) \right) < \varepsilon. \]
 On the other hand, from the data-processing inequality (Lemma \ref{lemma: data-processing inequality})
\[   I(X;Z) \leq I(X;Y). \]
The random variable $Z$ takes only finitely many values even if $Y$ does not.
\end{remark}

\section{Mean Hausdorff dimension with potential bounds mean dimension with potential: 
proofs of Theorem \ref{theorem: mean Hausdorff dimension bounds mean dimension} and Proposition 
\ref{prop: metric mean dimension bounds rate distortion dimension}}
\label{section: mean Hausdorff dimension bounds mean dimension}

Here we prove Theorem \ref{theorem: mean Hausdorff dimension bounds mean dimension} and 
Proposition \ref{prop: metric mean dimension bounds rate distortion dimension}.
The main issue is to prove that mean Hausdorff dimension with potential 
bounds mean dimension with potential.
The rest of the statements are easy.

\subsection{Proof of Proposition \ref{prop: metric mean dimension bounds rate distortion dimension}}
\label{subsection: proof of metric mean dimension bounds rate distortion dimension}

\begin{lemma}  \label{lemma: KL divergence is nonnegative}
Let $a_1,\dots,a_n$ be real numbers and $\mathbf{p} = (p_1,\dots, p_n)$ a probability vector.
For $\varepsilon >0$
\[  \sum_{i=1}^n \left(-p_i \log p_i + p_i a_i \log(1/\varepsilon) \right) \leq 
     \log \left(\sum_{i=1}^n (1/\varepsilon)^{a_i}\right). \]
\end{lemma}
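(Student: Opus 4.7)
The plan is to recognize this as the classical Gibbs variational inequality and prove it via nonnegativity of the Kullback--Leibler divergence, which is exactly what the lemma's name hints at. First I would set $Z = \sum_{i=1}^n (1/\varepsilon)^{a_i}$ and introduce the ``Gibbs'' probability vector $\mathbf{q} = (q_1,\dots,q_n)$ defined by
\[
  q_i = \frac{(1/\varepsilon)^{a_i}}{Z},
\]
so that $\log q_i = a_i \log(1/\varepsilon) - \log Z$.

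Next I would invoke the fundamental inequality $D(\mathbf{p}\|\mathbf{q}) \geq 0$, i.e.\ $\sum_i p_i \log(p_i/q_i) \geq 0$, which follows from Jensen's inequality applied to the convex function $x \log x$ (or, equivalently, from $\log t \leq (t-1)/\ln 2$). Substituting the expression for $\log q_i$ and rearranging gives
\[
  \sum_{i=1}^n p_i \log p_i \geq \sum_{i=1}^n p_i \log q_i = \log(1/\varepsilon) \sum_{i=1}^n p_i a_i - \log Z,
\]
which is precisely the desired inequality after moving terms to the correct sides.

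There is essentially no obstacle here: the only minor care needed is handling $p_i = 0$ entries via the convention $0\log 0 = 0$, and noting that the inequality is vacuous (or trivially holds) when some $q_i = 0$, which can only occur in degenerate limits that do not arise for finite $a_i$ and $\varepsilon > 0$. So the whole argument is a one-line application of Gibbs' inequality once the correct candidate measure $\mathbf{q}$ is written down.
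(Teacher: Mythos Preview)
Your proposal is correct and follows essentially the same approach as the paper: the paper also introduces the Gibbs vector $q_i = (1/\varepsilon)^{a_i}/\sum_j (1/\varepsilon)^{a_j}$ and deduces the inequality by expanding $D(\mathbf{p}\|\mathbf{q}) \geq 0$.
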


\begin{proof}
We can prove this by a simple calculus \cite[p. 217, Lemma 9.9]{Walter book}.
Instead of giving it, we briefly describe the information theoretic meaning of the above inequality.
This is more instructive.
Consider a probability vector 
\[ \mathbf{q} = (q_1, \dots, q_n) :=  \frac{1}{\sum_{i=1}^n (1/\varepsilon)^{a_i}} 
   \left((1/\varepsilon)^{a_1}, \dots, (1/\varepsilon)^{a_n}\right). \]
The Kullback--Leibler distance $D(\mathbf{p}|| \mathbf{q})$ is always nonnegative
\cite[Theorem 2.6.3]{Cover--Thomas}:
\[  D(\mathbf{p}|| \mathbf{q}) : = \sum_{i=1}^n p_i \log \frac{p_i}{q_i} \geq 0. \]
Expanding this inequality, we get the above statement.
\end{proof}

\begin{proposition}[$=$ Proposition \ref{prop: metric mean dimension bounds rate distortion dimension}]
\label{prop: metric mean dimension bounds rate distortion dimension restated}

Let $(\mathcal{X}, T)$ be a dynamical system with a metric $\mathbf{d}$ and an invariant probability measure $\mu$.
Let $\varphi:\mathcal{X}\to \mathbb{R}$ be a continuous function. Then
\begin{equation*}
  \begin{split}
    \overline{\rdim}(\mathcal{X},T,\mathbf{d},\mu) + \int_{\mathcal{X}} \varphi \, d\mu  & 
    \leq \overline{\mdim}_{\mathrm{M}}(\mathcal{X}, T, \mathbf{d},  \varphi), \\
    \underline{\rdim}(\mathcal{X},T,\mathbf{d}, \mu) + \int_{\mathcal{X}} \varphi \, d\mu  &
         \leq \underline{\mdim}_{\mathrm{M}}(\mathcal{X},T, \mathbf{d}, \varphi).    
  \end{split}
\end{equation*}
\end{proposition}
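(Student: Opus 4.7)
The plan is to mirror the classical proof that $h_\mu(T) \leq h_{\mathrm{top}}(T)$, adapted to absorb the potential via Lemma~\ref{lemma: KL divergence is nonnegative}. First, for fixed $0 < \varepsilon < 1$ and $N \geq 1$, I would take an arbitrary open cover $\mathcal{X} = U_1 \cup \cdots \cup U_n$ with $\diam_{\mathbf{d}_N} U_i < \varepsilon$ for every $i$, form the disjoint measurable refinement $V_1 = U_1$, $V_i = U_i \setminus \bigcup_{j < i} U_j$, choose representatives $y_i \in U_i$ whenever $V_i \neq \emptyset$, and set $p_i := \mu(V_i)$ together with $a_i := \sup_{U_i} S_N \varphi$.

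The next step is a quantization argument. With $X \sim \mu$, define the $\mathcal{X}^N$-valued random variable $Y = (y_i, T y_i, \ldots, T^{N-1} y_i)$ on the event $\{X \in V_i\}$. Since $\mathbf{d}(T^n X, T^n y_i) \leq \mathbf{d}_N(X, y_i) < \varepsilon$ on $V_i$, the average distortion condition in the definition of $R(\mathbf{d}, \mu, \varepsilon)$ is satisfied, and because $Y$ is a deterministic function of $X$ one has $I(X; Y) = H(Y) \leq -\sum_i p_i \log p_i$; hence $R(\mathbf{d}, \mu, \varepsilon) \leq \tfrac{1}{N}\bigl(-\sum_i p_i \log p_i\bigr)$. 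For the potential, $T$-invariance of $\mu$ gives $N \int_\mathcal{X} \varphi \, d\mu = \int_\mathcal{X} S_N \varphi \, d\mu = \sum_i \int_{V_i} S_N \varphi \, d\mu \leq \sum_i p_i a_i$. Multiplying the last display by $\log(1/\varepsilon) > 0$, adding the result to the rate distortion bound, and invoking Lemma~\ref{lemma: KL divergence is nonnegative} with weights $(p_i)$ and exponents $(a_i)$ produces
\begin{equation*}
R(\mathbf{d}, \mu, \varepsilon) + \log(1/\varepsilon) \int_\mathcal{X} \varphi \, d\mu \leq \frac{1}{N} \log \sum_{i=1}^n (1/\varepsilon)^{a_i}.
\end{equation*}

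To finish, I would take the infimum over admissible covers (bounding the right side by $\tfrac{1}{N} \log \#(\mathcal{X}, \mathbf{d}_N, S_N \varphi, \varepsilon)$), let $N \to \infty$ to replace it with $P(\mathcal{X}, T, \mathbf{d}, \varphi, \varepsilon)$, divide through by $\log(1/\varepsilon)$, and finally take $\limsup_{\varepsilon \to 0}$ (respectively $\liminf_{\varepsilon \to 0}$) to obtain the two claimed inequalities. I do not expect any serious obstacle here. The only conceptually nontrivial step is recognizing that Lemma~\ref{lemma: KL divergence is nonnegative} is precisely the mechanism that fuses the entropy-type term $-\sum_i p_i \log p_i$ with the potential-weighted term $\log(1/\varepsilon)\sum_i p_i a_i$ into the covering sum $\sum_i (1/\varepsilon)^{a_i}$; once this fusion is identified, the rest is essentially bookkeeping. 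Minor care is needed to keep $\varepsilon < 1$ so that the sign of $\log(1/\varepsilon)$ is unambiguous, and to allow the $y_i$ to possibly coincide, but the inequality $H(Y) \leq -\sum_i p_i \log p_i$ absorbs both concerns.
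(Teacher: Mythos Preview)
Your proposal is correct and matches the paper's proof essentially line for line: the same disjoint refinement $V_i = U_i \setminus \bigcup_{j<i} U_j$, the same quantizer $Y$, the same bound $I(X;Y) \leq H(Y) \leq -\sum_i p_i \log p_i$, the same estimate $N\int \varphi\,d\mu \leq \sum_i p_i a_i$ via invariance, and the same fusion through Lemma~\ref{lemma: KL divergence is nonnegative} before passing to the limit in $N$ and then $\varepsilon$. Your remarks about $\varepsilon<1$ and possible collisions among the $y_i$ are accurate and already implicit in the paper's argument.
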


\begin{proof}
Let $X$ be a random variable taking values in $\mathcal{X}$ and obeying $\mu$.
Let $N>0$ and let $\mathcal{X} = U_1\cup \dots \cup U_n$ be an open cover with 
$\diam (U_i, \mathbf{d}_N) < \varepsilon$ for all $i$.
Pick $x_i\in U_i$.
We define a random variable $Y$ by 
\[  Y= (x_i, T x_i, \dots, T^{N-1} x_i) \quad \text{if } X\in U_i\setminus (U_1\cup \dots \cup U_{i-1}). \] 
Obviously 
\[  \frac{1}{N}\sum_{k=0}^{N-1}\mathbb{E} \mathbf{d}(T^k X, Y_k) < \varepsilon. \]
Set $p_i = \mu\left(U_i\setminus (U_1\cup \dots\cup U_{i-1})\right)$.
Then 
\[  I(X;Y) \leq H(Y) \leq - \sum_{i=1}^n p_i\log p_i. \]
Set $a_i = \sup_{U_i} S_N\varphi$.
Then 
\[ N \int_{\mathcal{X}} \varphi \, d\mu = \int_{\mathcal{X}} S_N \varphi \, d\mu \leq \sum_{i=1}^n p_i a_i. \]
Hence 
\begin{equation*}
  \begin{split}
     R(\mathbf{d},\mu,\varepsilon) + \log (1/\varepsilon) \int_{\mathcal{X}} \varphi \, d\mu & \leq 
    \frac{I(X;Y)}{N} + \log(1/\varepsilon) \int_{\mathcal{X}} \varphi\, d\mu \\
     & \leq \frac{1}{N} \sum_{i=1}^n \left(-p_i \log p_i + p_i a_i \log(1/\varepsilon) \right) \\
    &\leq \frac{1}{N}    \log \left(\sum_{i=1}^n (1/\varepsilon)^{a_i}\right)   \quad 
    \text{by Lemma \ref{lemma: KL divergence is nonnegative}}.
  \end{split}
\end{equation*}
Thus 
\[  R(\mathbf{d},\mu,\varepsilon) + \log(1/\varepsilon) \int_{\mathcal{X}}\varphi \, d\mu \leq 
     \frac{\log \#(\mathcal{X},\mathbf{d}_N,S_N\varphi,\varepsilon)}{N}. \]
Let $N\to \infty$. Then 
\[   R(\mathbf{d},\mu,\varepsilon) + \log (1/\varepsilon) \int_{\mathcal{X}} \varphi \, d\mu 
      \leq P(\mathcal{X}, T, \mathbf{d}, \varphi, \varepsilon). \]
Divide this by $\log (1/\varepsilon)$ and take the limit of $\varepsilon \to 0$.
\end{proof}

\subsection{Remarks on width dimension with potential}  \label{subsection: remarks on width dimension with potential}

The proof of Theorem \ref{theorem: mean Hausdorff dimension bounds mean dimension} basically
follows the methods developed in \cite[Theorem 4.2]{Lindenstrauss--Weiss} and 
\cite[Proposition 3.2]{Lindenstrauss--Tsukamoto double VP}.
However there is an additional technical issue around the quantity $\widim_\varepsilon(\mathcal{X}, \mathbf{d}, \varphi)$
introduced in (\ref{eq: widim with potential}).
This subsection is a preparation for it.

Let $P$ be a simplicial complex and $a\in P$.
Recall that we defined the local dimension $\dim_a P$ as the maximum of $\dim \Delta$
where $\Delta\subset P$ is a simplex containing $a$.
We define the \textbf{small local dimension} $\dim'_a P$ as the \textit{minimum} of $\dim \Delta$
where $\Delta\subset P$ is a simplex containing $a$.
See Figure \ref{figure: small local dimension}.

\begin{figure}[h]
    \centering
    \includegraphics[width=3.0in]{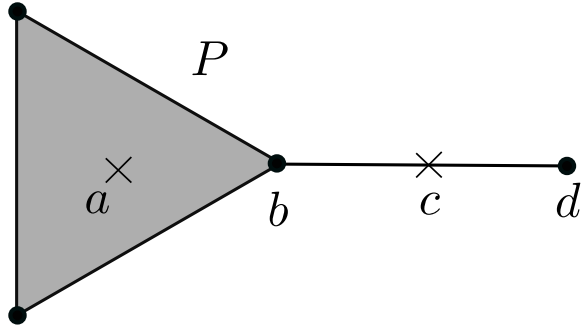}
    \caption{Here $P$ has four vertexes (denoted by dots), four $1$-dimensional simplexes and one $2$-dimensional simplex. 
    The points $b$ and $d$ are vertexes of $P$ wheres $a$ and $c$ are not. We have
    $\dim'_a P =2$,  $\dim'_b P =0$, $\dim'_c P =1$ and $\dim'_d P =0$.
    Recall $\dim_a P = \dim_b P =2$ and $\dim_c P = \dim_d P =1$.}   \label{figure: small local dimension}
\end{figure}

\begin{remark}   \label{remark: small local dimension is combinatorial}
The local dimension $\dim_a P$ is a topological quantity in the sense that 
$\dim_a P$ is equal to the minimum of the topological dimension $\dim U$ where $U\subset P$ is
a neighborhood of $a$.
The small local dimension $\dim'_a P$ is a \textit{combinatorial} quantity.
It depends on the 
combinatorial structure of $P$. 
In particular if $Q$ is a subdivision of $P$ then 
$\dim'_a Q \leq \dim'_a P$.
\end{remark}

Let $(\mathcal{X}, \mathbf{d})$ be a compact metric space with a continuous function 
$\varphi:\mathcal{X}\to \mathbb{R}$.
For $\varepsilon>0$ we set 
\begin{equation*} 
   \begin{split}
     & \widim'_\varepsilon(\mathcal{X}, \mathbf{d}, \varphi)  \\ 
     & =  \inf\left\{\sup_{x\in \mathcal{X}} \left(\dim'_{f(x)} P + \varphi(x)\right) \middle|
    \parbox{3in}{\centering $P$ is a simplicial complex and $f:\mathcal{X}\to P$ is an $\varepsilon$-embedding}\right\}.  
   \end{split}
\end{equation*}    
It follows from Remark \ref{remark: small local dimension is combinatorial} that 
this is also given by
\begin{equation*} 
   \begin{split}
     & \widim'_\varepsilon(\mathcal{X}, \mathbf{d}, \varphi)  \\ 
     & =  \inf\left\{\sup_{x\in \mathcal{X}} \left(\dim'_{f(x)} P + \varphi(x)\right) \middle|
    \parbox{3in}{\centering $P$ is a simplicial complex and $f:\mathcal{X}\to P$ is a continuous map satisfying 
    $\diam f^{-1}\left(\Delta\right) < \varepsilon$ for all simplexes $\Delta \subset P$}\right\}.  
   \end{split}
\end{equation*}    

We set 
\[  \mathrm{var}_\varepsilon(\varphi, \mathbf{d}) = 
    \sup\{|\varphi(x)-\varphi(y)| \, |\, \mathbf{d}(x,y) < \varepsilon\}. \]
For a simplicial complex $P$, we say that a continuous map $f:\mathcal{X}\to P$ is \textbf{essential}
if no proper subcomplex of $P$ contains $f(\mathcal{X})$.

\begin{lemma} \label{lemma: widim and small widim}
\[  \widim'_\varepsilon(\mathcal{X}, \mathbf{d}, \varphi) 
     \leq \widim_\varepsilon(\mathcal{X}, \mathbf{d}, \varphi) 
     \leq \widim'_\varepsilon(\mathcal{X}, \mathbf{d}, \varphi) + \var_\varepsilon(\varphi, \mathbf{d}). \]
\end{lemma}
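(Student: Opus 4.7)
The first inequality is immediate from the pointwise bound $\dim'_a P \leq \dim_a P$ valid in any simplicial complex: any $\varepsilon$-embedding witnessing a given value for $\widim_\varepsilon(\mathcal{X},\mathbf{d},\varphi)$ is a fortiori a witness for at most the same value for $\widim'_\varepsilon(\mathcal{X},\mathbf{d},\varphi)$. The content of the lemma lies in the second inequality.

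To prove it, my plan is to use the alternative characterization of $\widim'_\varepsilon$ recorded in the excerpt -- the one requiring $\diam f^{-1}(\Delta)<\varepsilon$ for every simplex $\Delta\subset P$, rather than only for every point. Fix $\eta>0$ and pick $f\colon\mathcal{X}\to P$ satisfying this simplex-diameter condition with $\sup_x(\dim'_{f(x)}P+\varphi(x))<\widim'_\varepsilon(\mathcal{X},\mathbf{d},\varphi)+\eta$. The central step will be to replace $P$ by the smallest subcomplex $P'\subset P$ containing $f(\mathcal{X})$, so that $f\colon\mathcal{X}\to P'$ becomes essential. This does not worsen the bound because for every $a\in P'$ the unique open simplex of $P$ through $a$ already belongs to $P'$, so $\dim'_a P=\dim'_a P'$; the simplex-diameter property survives automatically; and by applying it to the smallest simplex containing a given point one sees that $f$ is in particular an $\varepsilon$-embedding and thus a legitimate candidate for $\widim_\varepsilon$.

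Given such an essential $f$, I match points as follows. For $x\in\mathcal{X}$, set $k=\dim_{f(x)}P'$ and pick a simplex $\Delta\subset P'$ of dimension $k$ containing $f(x)$; necessarily $\Delta$ is a facet, since any simplex strictly containing $\Delta$ would give a simplex through $f(x)$ of dimension greater than $k$. Essentiality forces $f(\mathcal{X})$ to meet $\mathrm{int}\,\Delta$, so I can choose $x'\in\mathcal{X}$ with $f(x')\in\mathrm{int}\,\Delta$; for this $x'$ the unique smallest simplex through $f(x')$ is $\Delta$ itself, so $\dim'_{f(x')}P'=k$. Both $x$ and $x'$ lie in $f^{-1}(\Delta)$, giving $\mathbf{d}(x,x')<\varepsilon$ and hence $|\varphi(x)-\varphi(x')|\leq \var_\varepsilon(\varphi,\mathbf{d})$, whence
\[ \dim_{f(x)}P'+\varphi(x) \;=\; k+\varphi(x) \;\leq\; \dim'_{f(x')}P'+\varphi(x')+\var_\varepsilon(\varphi,\mathbf{d}). \]
Taking the maximum over $x$ and letting $\eta\to 0$ will yield the desired inequality.

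The step I expect to need the most care is the essential reduction: verifying that passing to $P'$ preserves $\dim'_a$ (though it may decrease $\dim_a$) and keeps the simplex-diameter condition. This rests on the combinatorial fact that a subcomplex of a simplicial complex is a union of open simplices closed under taking faces, so the open simplex through any point $a\in P'$ is forced to lie in $P'$. Once that is settled, the geometric matching between a point of maximal local dimension and a chosen point in the relative interior of the corresponding facet, together with the absorption of the $\varphi$-difference into a single $\var_\varepsilon$ term, finishes the argument cleanly.
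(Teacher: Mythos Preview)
Your proof is correct and follows essentially the same approach as the paper: use the simplex-diameter formulation of $\widim'_\varepsilon$, reduce to an essential map, and for each $x$ pick a maximal simplex $\Delta\ni f(x)$ and a point $x'$ with $f(x')\in\mathrm{int}\,\Delta$ to compare $\dim_{f(x)}$ with $\dim'_{f(x')}$. You merely spell out in greater detail the step the paper leaves implicit (that restricting to the minimal subcomplex preserves $\dim'_a$ and the simplex-diameter condition), which is fine.
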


\begin{proof}
$\widim'_\varepsilon(\mathcal{X}, \mathbf{d}, \varphi) \leq \widim_\varepsilon(\mathcal{X}, \mathbf{d}, \varphi)$
is obvious.
Let $s>\widim'_\varepsilon(\mathcal{X}, \mathbf{d}, \varphi)$.
There are a simplicial complex $P$ and a continuous map 
$f:\mathcal{X}\to P$ such that $\diam f^{-1}(\Delta) < \varepsilon$ for all 
simplexes $\Delta \subset P$ and that 
\[  \dim'_{f(x)} P +\varphi(x) < s \quad (\forall x\in \mathcal{X}). \]
Moreover we can assume that $f$ is essential.

Let $x\in \mathcal{X}$ and let $\Delta\subset P$ be a maximum simplex containing $f(x)$.
Since $f$ is essential, there is $y\in \mathcal{X}$ such that $f(y)$ is an interior point of $\Delta$.
Then 
\[  \dim'_{f(y)} P = \dim \Delta = \dim_{f(x)} P. \]
We have $\mathbf{d}(x,y) \leq \diam f^{-1}(\Delta) < \varepsilon$.
So $|\varphi(x)-\varphi(y)| \leq \var_\varepsilon(\varphi, \mathbf{d})$.
Hence 
\[ \dim_{f(x)} P + \varphi(x) \leq \dim'_{f(y)} P + \varphi(y) + \var_\varepsilon(\varphi, \mathbf{d}) < 
    s + \var_\varepsilon(\varphi, \mathbf{d}). \]
Since $x\in \mathcal{X}$ is arbitrary, 
$\widim_\varepsilon(\mathcal{X}, \mathbf{d},\varphi) \leq s + \var_\varepsilon(\varphi, \mathbf{d})$.
\end{proof}

\begin{corollary} \label{cor: definition of mean dimension with potential by small local dimension}
Let $T:\mathcal{X}\to \mathcal{X}$ be a homeomorphism. Then 
\[  \mdim(\mathcal{X}, T, \varphi) = 
     \lim_{\varepsilon\to 0} \left(\lim_{N\to \infty} \frac{\widim'_\varepsilon(\mathcal{X}, \mathbf{d}_N, S_N\varphi)}{N}\right). \]
Here $\widim'_\varepsilon(\mathcal{X},\mathbf{d}_N,S_N\varphi)$ is subadditive in $N$ and monotone in $\varepsilon$.     
\end{corollary}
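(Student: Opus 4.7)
The plan is to deduce the identity from Lemma \ref{lemma: widim and small widim} by a sandwich argument, once I verify the monotonicity and subadditivity that make the iterated limit well-defined.

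Monotonicity in $\varepsilon$ is immediate: any $\varepsilon_1$-embedding is automatically an $\varepsilon_2$-embedding when $\varepsilon_1 \leq \varepsilon_2$, so $\widim'_{\varepsilon_2} \leq \widim'_{\varepsilon_1}$. For subadditivity in $N$, I use the alternative characterization of $\widim'$ (in terms of $\diam f^{-1}(\Delta) < \varepsilon$ for all simplexes $\Delta$) and a product construction. Given $f\colon \mathcal{X}\to P$ witnessing $\widim'_\varepsilon(\mathcal{X},\mathbf{d}_N,S_N\varphi)\leq s$ and $g\colon \mathcal{X}\to Q$ witnessing $\widim'_\varepsilon(\mathcal{X},\mathbf{d}_M,S_M\varphi)\leq t$, I define $h\colon \mathcal{X}\to P\times Q$ by $h(x)=(f(x), g(T^N x))$ and equip $P\times Q$ with a simplicial subdivision in which every simplex is contained in some product $\Delta_P\times \Delta_Q$ of simplexes of $P$ and $Q$. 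The key combinatorial point is that, for such a subdivision, if $(a,b)$ lies in the relative interior of $\Delta_P\times \Delta_Q$ (with $\Delta_P$, $\Delta_Q$ the smallest simplexes of $P$, $Q$ containing $a$, $b$), then $\dim'_{(a,b)}(P\times Q) = \dim \Delta_P + \dim \Delta_Q = \dim'_a P + \dim'_b Q$. Combining this with $\mathbf{d}_{N+M}(x,y) = \max\bigl(\mathbf{d}_N(x,y), \mathbf{d}_M(T^N x, T^N y)\bigr)$ and $S_{N+M}\varphi = S_N\varphi + S_M\varphi\circ T^N$ yields $\widim'_\varepsilon(\mathcal{X},\mathbf{d}_{N+M},S_{N+M}\varphi) \leq s+t$.

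For the limit identity, I apply Lemma \ref{lemma: widim and small widim} with $(\mathbf{d},\varphi)$ replaced by $(\mathbf{d}_N, S_N\varphi)$ to obtain
\[  \widim'_\varepsilon(\mathcal{X}, \mathbf{d}_N, S_N\varphi)
     \leq \widim_\varepsilon(\mathcal{X}, \mathbf{d}_N, S_N\varphi)
     \leq \widim'_\varepsilon(\mathcal{X}, \mathbf{d}_N, S_N\varphi) + \var_\varepsilon(S_N\varphi, \mathbf{d}_N). \]
The essential estimate is $\var_\varepsilon(S_N\varphi, \mathbf{d}_N) \leq N\cdot \var_\varepsilon(\varphi, \mathbf{d})$, because $\mathbf{d}_N(x,y)<\varepsilon$ forces $\mathbf{d}(T^n x, T^n y) < \varepsilon$ for every $0\leq n<N$, so a telescoping bound gives $|S_N\varphi(x) - S_N\varphi(y)| \leq N\var_\varepsilon(\varphi, \mathbf{d})$. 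Dividing by $N$ and letting $N\to \infty$ (the outer limits exist by the subadditivity just proved, the middle limit by the subadditivity built into the definition of $\mdim$), then $\varepsilon\to 0$, all three sides collapse together because $\var_\varepsilon(\varphi, \mathbf{d}) \to 0$ by uniform continuity of $\varphi$ on the compact space $\mathcal{X}$. The middle limit is $\mdim(\mathcal{X}, T, \varphi)$ by definition, so the outer limit has the same value.

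The one substantive obstacle is the combinatorial verification that $\dim'$ adds under a suitable product triangulation of $P\times Q$. This is what genuinely distinguishes $\widim'$ from $\widim$ (where the corresponding additivity of $\dim$ follows from its purely topological characterization, cf. Remark \ref{remark: small local dimension is combinatorial}) and requires choosing the simplicial subdivision so that each product face $\Delta_P\times \Delta_Q$ is triangulated into top-dimensional simplexes of dimension exactly $\dim \Delta_P + \dim \Delta_Q$; the standard staircase subdivision does this.
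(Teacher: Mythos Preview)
Your argument for the limit identity is exactly the paper's: apply Lemma \ref{lemma: widim and small widim} with $(\mathbf{d}_N,S_N\varphi)$, use $\var_\varepsilon(S_N\varphi,\mathbf{d}_N)\le N\var_\varepsilon(\varphi,\mathbf{d})$, divide by $N$, and let $N\to\infty$, $\varepsilon\to 0$. The paper does nothing more than this, and in particular it does not spell out the subadditivity or monotonicity; you supply an actual proof of the former via the product construction, which is a reasonable addition.

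One small imprecision: in the product step you assert the \emph{equality} $\dim'_{(a,b)}(P\times Q)=\dim'_aP+\dim'_bQ$ for the staircase subdivision, but a point $(a,b)$ in the relative interior of $\Delta_P\times\Delta_Q$ may land on a lower-dimensional face of the triangulation of that cell, giving strictly smaller $\dim'$. This is harmless, since only the inequality $\dim'_{(a,b)}(P\times Q)\le\dim'_aP+\dim'_bQ$ is needed for subadditivity, and that follows because every simplex of the subdivision lying in $\Delta_P\times\Delta_Q$ has dimension at most $\dim\Delta_P+\dim\Delta_Q$.
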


\begin{proof}
Recall that we defined 
\[   \mdim(\mathcal{X}, T, \varphi) = 
     \lim_{\varepsilon\to 0} \left(\lim_{N\to \infty} \frac{\widim_\varepsilon(\mathcal{X}, \mathbf{d}_N, S_N\varphi)}{N}\right). \]
From Lemma \ref{lemma: widim and small widim}
\[   \widim'_\varepsilon(\mathcal{X}, \mathbf{d}_N, S_N\varphi) 
     \leq \widim_\varepsilon(\mathcal{X}, \mathbf{d}_N, S_N\varphi) 
     \leq \widim'_\varepsilon(\mathcal{X}, \mathbf{d}_N, S_N\varphi) + \var_\varepsilon(S_N\varphi, \mathbf{d}_N). \]
Since $\var_\varepsilon(S_N\varphi, \mathbf{d}_N) \leq N \cdot \var_\varepsilon(\varphi, \mathbf{d})$
\[ \lim_{\varepsilon \to 0} \left(\lim_{N\to \infty} \frac{\var_\varepsilon(S_N \varphi, \mathbf{d}_N)}{N}\right) 
    =  \lim_{\varepsilon\to 0} \var_\varepsilon(\varphi,\mathbf{d}) = 0. \] 
\end{proof}

\subsection{Proof of Theorem \ref{theorem: mean Hausdorff dimension bounds mean dimension}}
\label{subsection: proof of mean Hausdorff dimension bound mean dimension}

\begin{theorem}[$=$ Theorem \ref{theorem: mean Hausdorff dimension bounds mean dimension}]
\label{theorem: mean Hausdorff dimension bounds mean dimension restated}
Let $(\mathcal{X}, T)$ be a dynamical system with a metric $\mathbf{d}$ and a continuous function 
$\varphi: \mathcal{X}\to \mathbb{R}$.
\[  \mdim(\mathcal{X},T,\varphi) \leq \mdim_{\mathrm{H}}(\mathcal{X},T,\mathbf{d},\varphi) \leq 
     \underline{\mdim}_{\mathrm{M}}(\mathcal{X},T,\mathbf{d},\varphi). \]
\end{theorem}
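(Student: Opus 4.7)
The plan is to treat the two inequalities separately. The right-hand inequality $\mdim_{\mathrm{H}}\le\underline{\mdim}_{\mathrm{M}}$ is a direct comparison of covers, while the left-hand inequality $\mdim\le\mdim_{\mathrm{H}}$ is the substantive content and parallels the classical bound ``topological dimension $\le$ Hausdorff dimension'' lifted to the setting with potential.

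For $\mdim_{\mathrm{H}}\le\underline{\mdim}_{\mathrm{M}}$ I will use any admissible covering from the definition of $\#(\mathcal{X},\mathbf{d},\varphi,\varepsilon)$ as a Hausdorff cover. For $s\ge\max_{\mathcal{X}}\varphi$ and any open cover $\mathcal{X}=U_1\cup\dots\cup U_n$ with $\diam U_i<\varepsilon$, each exponent $s-\sup_{U_i}\varphi$ is nonnegative and $\diam U_i\le\varepsilon$, so
\[
(\diam U_i)^{s-\sup_{U_i}\varphi}\le\varepsilon^{s}(1/\varepsilon)^{\sup_{U_i}\varphi}.
\]
Summing and infimising gives $\mathcal{H}^{s}_{\varepsilon}(\mathcal{X},\mathbf{d},\varphi)\le\varepsilon^{s}\cdot\#(\mathcal{X},\mathbf{d},\varphi,\varepsilon)$, hence $\dim_{\mathrm{H}}(\mathcal{X},\mathbf{d},\varphi,\varepsilon)\le\log\#(\mathcal{X},\mathbf{d},\varphi,\varepsilon)/\log(1/\varepsilon)$. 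Applying this to $(\mathcal{X},\mathbf{d}_N,S_N\varphi)$, dividing by $N$, taking $\limsup_{N\to\infty}$ and then $\varepsilon\to0$ (noting that the $\varepsilon$-limit on the left is monotone and so bounded by any $\liminf$ on the right) yields the desired bound.

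For $\mdim\le\mdim_{\mathrm{H}}$ I will first invoke Corollary \ref{cor: definition of mean dimension with potential by small local dimension} to replace $\widim_{\varepsilon}$ by $\widim'_{\varepsilon}$, and then prove the static comparison
\[
\widim'_{\varepsilon}(\mathcal{Y},\mathbf{e},\psi)\le\dim_{\mathrm{H}}(\mathcal{Y},\mathbf{e},\psi,\varepsilon)+\eta
\]
for any compact metric space $(\mathcal{Y},\mathbf{e})$, any continuous $\psi$, any $\eta>0$, and $\varepsilon$ small. Applied to $(\mathcal{Y},\mathbf{e},\psi)=(\mathcal{X},\mathbf{d}_N,S_N\varphi)$, dividing by $N$, and letting $N\to\infty$, $\varepsilon\to0$, $\eta\to0$ finishes the theorem. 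Given $s>\dim_{\mathrm{H}}(\mathcal{Y},\mathbf{e},\psi,\varepsilon)$, fix a cover $\mathcal{Y}=\bigcup_i E_i$ with $\diam E_i<\varepsilon$ and $\sum_i(\diam E_i)^{s-\sup_{E_i}\psi}<1$. The strategy is to stratify by $\psi$: fix $\delta>0$ and partition the index set into bins $B_k=\{i:\sup_{E_i}\psi\in[k\delta,(k+1)\delta)\}$. On each bin, $\diam E_i<\varepsilon<1$ and $\sup_{E_i}\psi\ge k\delta$ give $\sum_{i\in B_k}(\diam E_i)^{s-k\delta}<1$, so a Lindenstrauss--Weiss-style nerve/partition-of-unity construction (as in \cite[Proposition 3.2]{Lindenstrauss--Tsukamoto double VP}) produces a continuous map from the union of $B_k$ into a simplicial complex $P_k$ of topological dimension at most $\lfloor s-k\delta\rfloor$ with all fibers of diameter $<\varepsilon$.

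The main obstacle is the assembly step: merging the per-bin maps into a single $f:\mathcal{Y}\to P$ while preserving both the fiber-diameter bound and the pointwise inequality $\dim'_{f(x)}P+\psi(x)\le s+\delta$. I plan to take $P$ to be a disjoint union (connected by an auxiliary edge structure, if needed for continuity) of the $P_k$'s and define $f$ through a partition of unity that routes the point $x$ into the layer whose $\psi$-interval contains $\psi(x)$; at an interior point of a simplex of $P_k$ one has $\dim'\le\lfloor s-k\delta\rfloor\le s-k\delta$ and $\psi(x)<(k+1)\delta$, so $\dim'_{f(x)}P+\psi(x)\le s+\delta$. Verifying continuity of the routing map across bin boundaries and maintaining the mesh bound $\diam f^{-1}(\Delta)<\varepsilon$ are the delicate points; I expect this is where the technical estimates of \cite{Lindenstrauss--Tsukamoto double VP} must be imported and refined. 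Letting $\delta\to0$ after all other limits concludes the proof.
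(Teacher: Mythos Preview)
Your proof of the second inequality $\mdim_{\mathrm{H}}\le\underline{\mdim}_{\mathrm{M}}$ is correct and matches the paper's argument.

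For the first inequality there is a genuine gap at exactly the point you flag as ``the main obstacle.'' The assembly step does not go through by routing via a partition of unity into a disjoint union of the $P_k$: to make the map continuous you must add simplices joining adjacent layers, and at points landing on such joining simplices the small local dimension $\dim'$ can jump to $\max(\dim P_k,\dim P_{k+1})+1$, which destroys the inequality $\dim'_{f(x)}P+\psi(x)\le s+\delta$. Moreover, your per-bin invocation of \cite[Proposition~3.2]{Lindenstrauss--Tsukamoto double VP} mischaracterises that result: it is not a nerve construction but already proceeds through an auxiliary Lipschitz embedding into a cube and incurs multiplicative constants of the shape $4^N(L+1)^{\cdots}$ (with $N$ the cube dimension and $L$ the Lipschitz constant) that must be absorbed elsewhere. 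A purely static inequality $\widim'_\varepsilon(\mathcal{Y},\mathbf{e},\psi)\le\dim_{\mathrm{H}}(\mathcal{Y},\mathbf{e},\psi,\varepsilon)+\eta$ with $\eta$ uniform over all $(\mathcal{Y},\mathbf{e},\psi)$ is therefore not what these methods give.

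The paper sidesteps the assembly problem entirely. One fixes once a Lipschitz map $f:\mathcal{X}\to[0,1]^M$ separating $\varepsilon$-distant points, and uses $f_N=(f,f\circ T,\dots,f\circ T^{N-1}):\mathcal{X}\to[0,1]^{MN}$, which has the \emph{same} Lipschitz constant $L$ relative to $\mathbf{d}_N$ for every $N$. Inside the single cube $[0,1]^{MN}$ one stratifies by the potential, $\mathcal{X}_k=\{k\le sN-S_N\varphi\le k+1\}$, and a volume estimate (Lemma~\ref{lemma: measure of projections}) produces a point $q\in(0,1)^{MN}$ such that $f_N(\mathcal{X}_k)$ misses all coordinate affine subspaces through $q$ of codimension $\ge k+1$. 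Successive central projections $g_n$ determined by $q$ then push $f_N(\mathcal{X}_k)$ into the $(k{+}1)$-skeleton, and continuity across the strata is provided by the linear homotopy $g_{n,t}=(1-t)g_n+t\,\mathrm{id}$; this gives a single $\varepsilon$-embedding $h$ with $\dim'_{h(x)}[0,1]^{MN}\le sN-S_N\varphi(x)+1$ (Lemma~\ref{lemma: widim estimate via hausdorff measure}). The accumulated constants $4^{MN}(L+1)^{\cdots}$ are killed by the extra factor $\delta^{\tau N}$ gained from passing from $sN$ to $(s+\tau)N$ in the Hausdorff exponent; this last step is where the dynamics (growth in $N$) is genuinely used.
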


\begin{proof}[Proof of Theorem \ref{theorem: mean Hausdorff dimension bounds mean dimension restated} (easy part): proof of 
$\mdim_{\mathrm{H}}(\mathcal{X},T,\mathbf{d},\varphi)\leq \underline{\mdim}_{\mathrm{M}}(\mathcal{X},T,\mathbf{d},\varphi)$]
Here we prove $\mdim_{\mathrm{H}}(\mathcal{X}, T, \mathbf{d}, \varphi) \leq 
\underline{\mdim}_{\mathrm{M}}(\mathcal{X},T,\mathbf{d},\varphi)$.
This is straightforward.
Let $0<\varepsilon<1$ and $N>0$.
Let $\mathcal{X}= U_1 \cup \dots\cup U_n$ be an open cover with 
$\diam (U_i, \mathbf{d}_N) < \varepsilon$.
For $s\geq \max_{\mathcal{X}} S_N\varphi$
\begin{equation*}
   \begin{split}
   \mathcal{H}^s_\varepsilon(\mathcal{X},\mathbf{d}_N, S_N\varphi) & 
   \leq \sum_{i=1}^n \left(\diam(U_i,\mathbf{d}_N)\right)^{s-\sup_{U_i}S_N\varphi} \\
   & \leq \sum_{i=1}^n \varepsilon^{s-\sup_{U_i}S_N\varphi}
   = \varepsilon^s \cdot \sum_{i=1}^n (1/\varepsilon)^{\sup_{U_i} S_N\varphi}.
   \end{split}
\end{equation*}   
So 
\[   \mathcal{H}^s_\varepsilon(\mathcal{X},\mathbf{d}_N, S_N\varphi) 
     \leq \varepsilon^s \cdot \#(\mathcal{X},\mathbf{d}_N,S_N\varphi, \varepsilon). \]
Hence $\mathcal{H}^s_\varepsilon(\mathcal{X},\mathbf{d}_N, S_N\varphi) <1$ 
 for $s> \log\#(\mathcal{X},\mathbf{d}_N,S_N\varphi,\varepsilon)/\log(1/\varepsilon)$.
 (Notice that we always have $\#(\mathcal{X},\mathbf{d}_N,S_N\varphi,\varepsilon)  \geq (1/\varepsilon)^{\max_{\mathcal{X}}S_N\varphi}$
 and hence $\log\#(\mathcal{X},\mathbf{d}_N,S_N\varphi,\varepsilon)/\log(1/\varepsilon) \geq \max_{\mathcal{X}} S_N\varphi$.)
This implies 
\[ \dim_{\mathrm{H}}(\mathcal{X},\mathbf{d}_N,S_N\varphi,\varepsilon) \leq 
    \frac{\log\#(\mathcal{X},\mathbf{d}_N,S_N\varphi,\varepsilon)}{\log(1/\varepsilon)}. \]
Divide this by $N$ and take the limits of $N\to \infty$:
\[  \limsup_{N\to \infty} \left(\frac{\dim_{\mathrm{H}}(\mathcal{X},\mathbf{d}_N,S_N\varphi,\varepsilon)}{N}\right)
      \leq \frac{P(\mathcal{X},T,\mathbf{d},\varphi,\varepsilon)}{\log(1/\varepsilon)}. \]   
Letting $\varepsilon\to 0$, we get $\mdim_{\mathrm{H}}(\mathcal{X}, T, \mathbf{d}, \varphi) \leq 
\underline{\mdim}_{\mathrm{M}}(\mathcal{X},T,\mathbf{d},\varphi)$.
\end{proof}

Next we prove that mean Hausdorff dimension with potential bounds mean dimension with potential.
We need some preparations.

Let $(\mathcal{X}, \mathbf{d})$ be a compact metric space.
For $s\geq 0$, we define 
\[  \mathcal{H}^s_\infty(\mathcal{X}, \mathbf{d}) = 
\inf\left\{ \sum_{i=1}^\infty \left(\diam E_i\right)^{s} \middle|\,
    \mathcal{X} = \bigcup_{i=1}^\infty E_i \right\}. \]
We denote the standard Lebesgue measure on $\mathbb{R}^N$ by $\nu_N$.
We set $\norm{x} = \max_{1\leq i\leq N} |x_i|$ for $x\in \mathbb{R}^N$.
For $A\subset \{1,2,\dots, N\}$ we define $\pi_A:[0,1]^N\to [0,1]^A$ as the projection to the $A$-coordinates.
The next lemma was given in \cite[Claim 3.3]{Lindenstrauss--Tsukamoto double VP}.

\begin{lemma} \label{lemma: measure of projections}
Let $K\subset [0,1]^N$ be a closed subset and $0\leq n\leq N$.  
    \begin{enumerate}
        \item $\nu_N(K) \leq 2^N \mathcal{H}^N_\infty\left(K, \norm{\cdot}\right)$.
        \item $\nu_N\left(\bigcup_{|A|\geq n} \pi_A^{-1}(\pi_A K)\right) \leq 4^N \mathcal{H}^n_\infty\left(K, \norm{\cdot}\right)$.
    \end{enumerate}
\end{lemma}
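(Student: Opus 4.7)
The plan is to prove both parts by the same kind of covering argument, estimating the Lebesgue volume of a single covering piece $E_i$ (or of its coordinate cylinders) in terms of $\diam E_i$, and then summing. The key observation throughout is that the $\ell^\infty$-diameter gives a direct control: a set $E \subset \mathbb{R}^N$ with $\diam(E,\|\cdot\|) = d$ has width at most $d$ in each coordinate, so it is contained in an axis-aligned cube of side $d$ and has $\nu_N(E) \leq d^N$. The stated constants $2^N$ and $4^N$ are comfortable overestimates, which is convenient for the bookkeeping in part (2).

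For part (1), I would take an arbitrary countable cover $K = \bigcup_{i} E_i$ (with $\diam E_i$ unrestricted, as in the definition of $\mathcal{H}^N_\infty$) and apply the cube bound coordinate-wise to get $\nu_N(E_i) \leq (\diam E_i)^N$. Subadditivity of $\nu_N$ then gives $\nu_N(K) \leq \sum_i (\diam E_i)^N$, and taking infimum over covers yields $\nu_N(K) \leq \mathcal{H}^N_\infty(K,\|\cdot\|) \leq 2^N \mathcal{H}^N_\infty(K,\|\cdot\|)$.

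For part (2), I would start from a cover $K = \bigcup_i E_i$ and first replace $E_i$ by $E_i \cap K$, which does not increase diameter and still covers $K$; this has the effect of ensuring $d_i := \diam E_i \leq 1$, a point one must be careful about. For each $A \subseteq \{1,\dots,N\}$ with $|A| = k \geq n$, the projection $\pi_A E_i \subset [0,1]^{A}$ has $\ell^\infty$-diameter at most $d_i$, hence fits in a $k$-cube of side $d_i$ and has $\nu_{|A|}(\pi_A E_i) \leq d_i^k$. Therefore the cylinder $\pi_A^{-1}(\pi_A E_i) \cap [0,1]^N$ has $N$-dimensional volume at most $d_i^k \leq d_i^n$, where the last inequality uses $d_i \leq 1$ and $k \geq n$. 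Summing over the at most $2^N$ subsets $A$ with $|A| \geq n$, and then over $i$, gives
\[
\nu_N\!\left(\bigcup_{|A|\geq n} \pi_A^{-1}(\pi_A K)\right) \leq 2^N \sum_i d_i^n,
\]
and taking infimum over covers yields the desired bound (with a factor of $2^N$, well within the claimed $4^N$).

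The only genuinely substantive point is the reduction $E_i \subset K \subset [0,1]^N$ in part (2), which secures the inequality $d_i^k \leq d_i^n$ that is needed to turn a projection-dependent exponent $|A|$ into the uniform exponent $n$. Everything else is routine volume estimation plus the elementary count $\sum_{k\geq n}\binom{N}{k}\leq 2^N$, so I would not expect any real obstacle.
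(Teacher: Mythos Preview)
Your argument is correct and follows essentially the same covering approach as the paper; the paper proves (1) with the coarser inclusion $E_i \subset x_i + [-l_i,l_i]^N$ (giving the factor $2^N$) and then derives (2) by applying (1) to each $\pi_A K$, whereas you bound the volume of each $E_i$ and its cylinders directly, which yields the sharper constants $1$ and $2^N$ in place of $2^N$ and $4^N$. Your care about ensuring $d_i \leq 1$ is appropriate, though note that in the paper's convention the covering sets $E_i$ in $\mathcal{H}^s_\infty(K,\norm{\cdot})$ are already subsets of $K \subset [0,1]^N$, so this is automatic.
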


\begin{proof}
(1) Let $K=\bigcup_{i=1}^\infty E_i$ with $l_i := \diam \left(E_i, \norm{\cdot}\right)$.
Take $x_i \in E_i$.
We have $E_i\subset x_i+[-l_i, l_i]^N$. Hence 
\[ \nu_N(K) \leq \sum_{i=1}^\infty (2l_i)^N = 2^N \sum_{i=1}^\infty l_i^N. \]

(2) The volume $\nu_N\left(\bigcup_{|A|\geq n} \pi_A^{-1}(\pi_A K)\right)$ is bounded by 
\[  \sum_{|A|\geq n} \nu_N\left(\pi_A^{-1}(\pi_A K)\right) = \sum_{|A|\geq n} \nu_{|A|} (\pi_A K). \]
We apply the above (1) to $\pi_A K\subset [0,1]^A$. Then 
\[  \nu_{|A|}(\pi_A K) \leq 2^{|A|} \mathcal{H}^{|A|}_\infty \left(\pi_A K, \norm{\cdot}\right) 
     \leq 2^N \mathcal{H}_\infty^{|A|}\left(\pi_A K, \norm{\cdot}\right) 
     \leq 2^N \mathcal{H}^{|A|}_\infty \left(K, \norm{\cdot}\right). \]
\begin{equation*}
    \nu_N\left(\bigcup_{|A|\geq n} \pi_A^{-1}(\pi_A K)\right) \leq 2^N \sum_{|A|\geq n} \mathcal{H}^{|A|}_\infty\left(K, \norm{\cdot}\right)
    \leq 2^N \sum_{|A|\geq n} \mathcal{H}^n_\infty\left(K, \norm{\cdot}\right) \leq 4^N \mathcal{H}^n_\infty\left(K, \norm{\cdot}\right).
\end{equation*}   
\end{proof}

The following lemma is the central ingredient of the proof of Theorem
\ref{theorem: mean Hausdorff dimension bounds mean dimension restated}.
The proof uses the method originally introduced in \cite[Theorem 4.2]{Lindenstrauss--Weiss}.
But our case is a bit more involved because we need to control the information of a potential function.

\begin{lemma} \label{lemma: widim estimate via hausdorff measure}
Let $(\mathcal{X}, \mathbf{d})$ be a compact metric space with a continuous function 
$\varphi:\mathcal{X}\to \mathbb{R}$.
Let $\varepsilon>0, L>0$ and $s\geq \max_{\mathcal{X}}\varphi$ be real numbers.
Suppose there exists a Lipschitz map $f:\mathcal{X}\to [0,1]^N$ such that 
\begin{itemize}
   \item $\norm{f(x)-f(y)} \leq L\cdot \mathbf{d}(x,y)$,
   \item $\norm{f(x)-f(y)} = 1$ if $\mathbf{d}(x,y) \geq \varepsilon$.
\end{itemize}      
Moreover, suppose 
\begin{equation}  \label{eq: assumption on 1-hausdorff measure}
      4^N  (L+1)^{1+s+\norm{\varphi}_\infty}  \mathcal{H}^s_1\left(\mathcal{X},\mathbf{d},\varphi\right) < 1, 
\end{equation}      
where $\norm{\varphi}_\infty = \max_{\mathcal{X}} |\varphi|$.
Then 
\[   \widim'_\varepsilon\left(\mathcal{X},\mathbf{d},\varphi\right) \leq s+1. \]
\end{lemma}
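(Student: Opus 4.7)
The plan is to generalize the proof of Theorem 4.2 of \cite{Lindenstrauss--Weiss} and Proposition 3.2 of \cite{Lindenstrauss--Tsukamoto double VP} to the potential-weighted setting. The core strategy is to cover $\mathcal{X}$ by small sets $E_i$, push each into the cube $[0,1]^N$ via $f$, and use a generic-translation argument to retract onto a low-dimensional subcomplex. The novelty forced by $\varphi$ is that different $E_i$ must be retracted onto skeleta of different target dimensions $n_i$, with heavier-weighted sets (larger $\sup_{E_i} \varphi$) going to lower-dimensional skeleta, so that the pointwise inequality $\dim'_{g(x)} P + \varphi(x) \leq s+1$ is achieved.

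First, using the hypothesis (\ref{eq: assumption on 1-hausdorff measure}) and the definition of $\mathcal{H}^s_1$, I would choose a countable cover $\mathcal{X} = \bigcup_i E_i$ with $r_i := \diam(E_i, \mathbf{d}) < 1$ and $\sum_i r_i^{s-t_i}$ strictly less than $1/[4^N(L+1)^{1+s+\norm{\varphi}_\infty}]$, where $t_i := \sup_{E_i}\varphi$. I would set $n_i := \lceil s+1-t_i\rceil$, a nonnegative integer bounded above in terms of $s$ and $\norm{\varphi}_\infty$. Each image $K_i := f(E_i) \subset [0,1]^N$ has $\norm{\cdot}$-diameter at most $Lr_i$.

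The key step is the generic translation argument. By Lemma \ref{lemma: measure of projections}(2), the Lebesgue measure of translations $y \in [0,1]^N$ that are ``bad for $E_i$'' (meaning there exists $x \in K_i$ agreeing with $y$ in at least $n_i$ coordinates) is at most $4^N \mathcal{H}^{n_i}_\infty(K_i, \norm{\cdot}) \leq 4^N (Lr_i)^{n_i}$. Using $r_i < 1$ together with $n_i \geq s+1-t_i$ gives $r_i^{n_i} \leq r_i \cdot r_i^{s-t_i}$, and using $n_i \leq 1+s+\norm{\varphi}_\infty$ (after absorbing the ceiling rounding into the constant) gives $L^{n_i} \leq (L+1)^{1+s+\norm{\varphi}_\infty}$, so the bad measure per $E_i$ is bounded by $4^N(L+1)^{1+s+\norm{\varphi}_\infty} r_i^{s-t_i}$. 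Summing over $i$ and invoking the cover bound, the total bad measure is strictly less than $1$, and a good $y\in[0,1]^N$ exists.

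With such a $y$, the final step is to construct the simplicial complex $P$ (a star-type triangulation of $[0,1]^N$ based at $y$, further subdivided to match an integer cubical structure) together with a continuous map $g : \mathcal{X} \to P$ that sends each $E_i$ into the $n_i$-skeleton of $P$. Concretely, $g(x)$ is built by radially projecting $f(x)$ away from $y$ onto an appropriate face; the goodness of $y$ guarantees that this projection of $f(E_i)$ lands in the $n_i$-skeleton. The $\varepsilon$-embedding property $\diam g^{-1}(\Delta) < \varepsilon$ follows from the separation hypothesis $\norm{f(x)-f(y)}=1$ for $\mathbf{d}(x,y)\geq\varepsilon$, which forces $\varepsilon$-distant points into distinct simplexes of the subdivided cubical complex. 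For $x \in E_i$, the local-dimension bound gives $\dim'_{g(x)} P + \varphi(x) \leq n_i + t_i \leq s+1$, yielding $\widim'_\varepsilon(\mathcal{X}, \mathbf{d}, \varphi) \leq s+1$. The main obstacle is this geometric construction: producing a single continuous $g$ which simultaneously and compatibly sends each (possibly overlapping) $E_i$ to the correct $n_i$-skeleton while preserving the $\varepsilon$-embedding property. I expect the construction to parallel the corresponding step of Proposition 3.2 of \cite{Lindenstrauss--Tsukamoto double VP}, adapted to handle variable target dimensions; a secondary subtlety is tracking constants precisely enough that the exponent $1+s+\norm{\varphi}_\infty$ in the hypothesis (rather than something like $2+s+\norm{\varphi}_\infty$) actually suffices, which depends on carefully exploiting $r_i<1$ to absorb the extra factor coming from the ceiling in $n_i$.
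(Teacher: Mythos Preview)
Your overall strategy is right, and you have correctly identified the main obstacle. Unfortunately, that obstacle is genuinely fatal for the approach as written, and the paper resolves it by a different stratification.

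The issue is this. Your cover $\{E_i\}$ comes from the definition of $\mathcal{H}^s_1$; the sets $E_i$ may overlap arbitrarily and the target dimensions $n_i$ need bear no relation to one another on overlaps. There is no way to patch together radial projections of $f(E_i)$ onto variable-dimensional skeleta into a single \emph{continuous} map $g$. The analogy with Proposition~3.2 of \cite{Lindenstrauss--Tsukamoto double VP} does not help here, because there the target dimension is constant across $\mathcal{X}$.

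The paper's fix is to discard the cover $\{E_i\}$ entirely and stratify $\mathcal{X}$ by the potential itself: set $\mathcal{X}_k = \{x : k \leq s - \varphi(x) \leq k+1\}$ for $0 \leq k \leq m = \lfloor s + \norm{\varphi}_\infty \rfloor$. One checks directly that $\mathcal{H}^{k+1}_\infty(\mathcal{X}_k, \mathbf{d}) \leq \mathcal{H}^s_1(\mathcal{X}, \mathbf{d}, \varphi)$ (any covering set $E$ with $\diam E<1$ meeting $\mathcal{X}_k$ has $s-\sup_E\varphi \leq k+1$, so $(\diam E)^{k+1}\leq (\diam E)^{s-\sup_E\varphi}$). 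The measure bound then becomes $\sum_{k=0}^{m} 4^N L^{k+1}\mathcal{H}^s_1 \leq 4^N(L+1)^{m+1}\mathcal{H}^s_1 < 1$, which incidentally also accounts cleanly for the exponent $1+s+\norm{\varphi}_\infty$ you worried about. The decisive gain is that the target skeleton dimension is now a function of $\varphi(x)$: the map is
\[
  h(x) = g_{k+1,\, s-\varphi(x)-k}\circ g_{k+2}\circ\cdots\circ g_N\circ f(x)
  \quad (x\in\mathcal{X}_k),
\]
where $g_{n,t} = (1-t)g_n + t\cdot\id$ interpolates between the radial projection $g_n$ and the identity. The parameter $s-\varphi(x)-k$ runs continuously from $0$ to $1$ across $\mathcal{X}_k$, and on $\mathcal{X}_k\cap\mathcal{X}_{k-1}$ (where $s-\varphi(x)=k$) both formulas give $g_{k+1}\circ\cdots\circ g_N\circ f(x)$, so $h$ is globally continuous.

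A small separate error: your final inequality $n_i + t_i \leq s+1$ is backwards with $n_i = \lceil s+1-t_i\rceil$. The projection actually lands one dimension lower, and $(n_i-1)+t_i < s+1$ is the inequality that holds.
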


\begin{proof}
Set $m = \lfloor s+\norm{\varphi}_\infty \rfloor$.
For integers $0\leq k\leq m$ we set 
\[ \mathcal{X}_k = \{x\in \mathcal{X}|\, k\leq s - \varphi(x)\leq k+1\}. \]
We have $\mathcal{X}= \bigcup_{k=0}^{m} \mathcal{X}_k$ and 
\[  \mathcal{H}^{k+1}_\infty \left(\mathcal{X}_k, \mathbf{d}\right) \leq 
     \mathcal{H}^s_1\left(\mathcal{X}, \mathbf{d},\varphi\right). \]
Then 
\[  \mathcal{H}^{k+1}_\infty\left(f(\mathcal{X}_k),\norm{\cdot}\right) \leq 
    L^{k+1} \mathcal{H}^{k+1}_\infty \left(\mathcal{X}_k, \mathbf{d}\right) \leq 
    L^{k+1} \mathcal{H}^s_1\left(\mathcal{X}, \mathbf{d},\varphi\right). \]
By Lemma \ref{lemma: measure of projections} (2)
\begin{equation*}
   \begin{split}
   \nu_N\left(\bigcup_{k=0}^{m} \bigcup_{|A|\geq k+1} \pi_A^{-1}\left(\pi_A(f\mathcal{X}_k)\right)\right) &\leq
   \sum_{k=0}^{m} 4^N \mathcal{H}^{k+1}_\infty\left(f(\mathcal{X}_k), \norm{\cdot}\right)  \\
   &\leq 4^N \left(L+L^2+\dots+L^{m+1}\right) \mathcal{H}^s_1(\mathcal{X},\mathbf{d},\varphi) \\
   & \leq    4^N (L+1)^{m+1}    \mathcal{H}^s_1\left(\mathcal{X}, \mathbf{d},\varphi\right)  \\
   & < 1  \quad \text{by (\ref{eq: assumption on 1-hausdorff measure})}.
   \end{split}
\end{equation*}   
Then we can find a point $q\in (0,1)^N$ 
outside of the set $\bigcup_{k=0}^{m} \bigcup_{|A|\geq k+1} \pi_A^{-1}\left(\pi_A(f\mathcal{X}_k)\right)$.
We have 
\[  f(\mathcal{X}_k) \cap \bigcup_{|A|\geq k+1} \pi_A^{-1}(\pi_A(q)) = \emptyset \]
for all $0\leq k\leq m$.
(For $k\geq N$ the set $\bigcup_{|A|\geq k+1} \pi_A^{-1}(\pi_A(q))$ is empty.)
 
Let $P_n \subset [0,1]^N$ $(0\leq n\leq N)$ be the $n$-skeleton, namely the set of 
$x\in [0,1]^N$ satisfying $|\{i|\, x_i = 0 \text{ or } 1\}| \geq N-n$.
We set 
\[  C_n = P_n \cap \bigcup_{|A|=n} \pi_A^{-1}(\pi_A(q)). \]
Each facet of $P_n$ contains exactly one point of $C_n$ (in particular, $C_n$ is a finite set). 
We define a continuous map $g_n:P_n\setminus C_n\to P_{n-1}$ by the central projection from each point of $C_n$.
For $1\leq l < n$ 
\[  g_n\left(P_n \setminus \bigcup_{|A|=l} \pi_A^{-1}(\pi_A(q))\right) = P_{n-1}\setminus \bigcup_{|A|=l} \pi_A^{-1}(\pi_A(q)). \]
For $0\leq t\leq 1$ we set 
\[  g_{n, t}(x) = (1-t) g_n(x) + t x \quad (x\in P_n\setminus C_n). \]
Let $1\leq n, n'\leq N$ and let
\[ x\in [0,1]^N \setminus \bigcup_{|A|\geq n} \pi_A^{-1}(\pi_A(q)), \quad 
   x' \in [0,1]^N \setminus \bigcup_{|A|\geq n'} \pi_A^{-1}(\pi_A(q)). \]
If $\norm{x-x'} = 1$ then for any $0\leq t, t'\leq 1$
\[ \norm{g_{n,t}\circ g_{n+1}\circ \dots \circ g_N(x) - g_{n',t'}\circ g_{n'+1}\circ \dots \circ g_N(x')}=1. \]
This is because if $x_i=0 \text{ or } 1$ then the $i$-th coordinate of $g_{n,t}(x)$ is equal to $x_i$.

We define a continuous map $h:\mathcal{X}\to [0,1]^N$ as follows.
Let $x\in \mathcal{X}_k$ $(0\leq k\leq m)$. 
If $k+1>N$ then we set $h(x) = x$. Otherwise we set 
\[ h(x) = g_{k+1, s-\varphi(x)-k}\circ g_{k+2} \circ \dots \circ g_N \circ f (x). \]
This definition is compatible on $\mathcal{X}_k\cap \mathcal{X}_{k-1}$.
The map $h$ is an $\varepsilon$-embedding.
For $x\in \mathcal{X}_k$ the point $h(x)$ belongs to $P_{\min(k+1, N)}$.
Note $k+1\leq s-\varphi(x) +1$.
We can introduce a simplicial complex structure on $[0,1]^N$ such that $P_n$ is an 
$n$-dimensional subcomplex.
Then for $x\in  \mathcal{X}_k$
\[  \dim'_{h(x)} [0,1]^N \leq \min(k+1, N) \leq s-\varphi(x)+1. \]
Therefore
\[ \widim'_\varepsilon(\mathcal{X}, \mathbf{d},\varphi) \leq \sup_{x\in \mathcal{X}} \left(s-\varphi(x)+1 + \varphi(x)\right) = s+1. \]
\end{proof}

Now we are ready to prove the main part of Theorem \ref{theorem: mean Hausdorff dimension bounds mean dimension restated}.

\begin{proof}[Proof of Theorem \ref{theorem: mean Hausdorff dimension bounds mean dimension restated} (main part): 
proof of $\mdim(\mathcal{X},T,\mathbf{d},\varphi) \leq \mdim_{\mathrm{H}}(\mathcal{X},T,\mathbf{d},\varphi)$]

Given $\varepsilon >0$, we take a Lipschitz map $f:\mathcal{X}\to [0,1]^M$ for some $M$ such that 
\[  \mathbf{d}(x,y) \geq \varepsilon \Longrightarrow \norm{f(x)-f(y)} = 1. \]
(Such $f$ can be constructed by using appropriate bump functions.)
Let $L>0$ be a Lipschitz constant of $f$, namely $\norm{f(x)-f(y)}\leq L\cdot \mathbf{d}(x,y)$.
For $N>0$ we define $f_N:\mathcal{X}\to [0,1]^{MN}$ by 
\[ f_N(x) = \left(f(x), f(Tx), \dots, f(T^{N-1}x)\right). \]
This satisfies 
\begin{itemize}
  \item $\norm{f_N(x)-f_N(y)} \leq L \cdot \mathbf{d}_N(x,y)$, 
  \item $\norm{f_N(x)-f_N(y)}= 1$ if $\mathbf{d}_N(x,y) \geq \varepsilon$.
\end{itemize}

Take $s>\mdim_{\mathrm{H}}(\mathcal{X}, T,\mathbf{d},\varphi)$.
Let $\tau>0$ be arbitrary.
Choose $0<\delta<1$ satisfying 
\begin{equation} \label{eq: choice of delta in the proof of mdim leq mdim_H}
     4^M \cdot (L+1)^{1+s+\tau +\norm{\varphi}_\infty} \cdot \delta^\tau < 1. 
\end{equation}     
From $\mdim_{\mathrm{H}}(\mathcal{X},T,\mathbf{d},\varphi) < s$, we can find
$0<N_1<N_2<N_3<\dots\to \infty$ satisfying 
$\dim_{\mathrm{H}}(\mathcal{X},\mathbf{d}_{N_i}, S_{N_i}\varphi,\delta) < s N_i$.
(Notice that we actually use only $\underline{\mdim}_{\mathrm{H}}(\mathcal{X}, T,\mathbf{d},\varphi) < s$ here.)
Then $\mathcal{H}_\delta^{s N_i}\left(\mathcal{X},\mathbf{d}_{N_i},S_{N_i}\varphi \right) <1$ and hence 
\[ \mathcal{H}_\delta^{(s+\tau)N_i}\left(\mathcal{X},\mathbf{d}_{N_i}, S_{N_i}\varphi \right) 
   \leq \delta^{\tau N_i} \mathcal{H}_\delta^{s N_i}\left(\mathcal{X},\mathbf{d}_{N_i}, S_{N_i}\varphi \right) < \delta^{\tau N_i}. \]
By (\ref{eq: choice of delta in the proof of mdim leq mdim_H})
\begin{equation*}
   \begin{split}
      4^{M N_i}\cdot (L+1)^{1 + (s+\tau)N_i + \norm{S_{N_i}\varphi}_\infty} 
    \mathcal{H}_1^{(s+\tau)N_i}\left(\mathcal{X}, \mathbf{d}_{N_i}, S_{N_i}\varphi\right) 
    &< \left\{4^M \cdot (L+1)^{1+s+\tau + \norm{\varphi}_\infty}\cdot \delta^\tau \right\}^{N_i}  \\
    &<  1.
    \end{split}
\end{equation*}    
Now we apply Lemma \ref{lemma: widim estimate via hausdorff measure} to 
$(\mathcal{X},\mathbf{d}_{N_i}, S_{N_i}\varphi)$ and 
$f_{N_i}:\mathcal{X}\to [0,1]^{MN_i}$.
(We replace the parameter $s$ in the statement of Lemma \ref{lemma: widim estimate via hausdorff measure} with $(s+\tau)N_i$.)
Then 
\[  \widim'_\varepsilon\left(\mathcal{X},\mathbf{d}_{N_i}, S_{N_i}\varphi\right) \leq (s+\tau)N_i + 1. \]
Hence 
\[  \lim_{N\to \infty} \frac{\widim'_\varepsilon\left(\mathcal{X},\mathbf{d}_{N}, S_{N}\varphi\right)}{N} \leq s + \tau. \]
Let $s\to \mdim_{\mathrm{H}}(\mathcal{X},T,\mathbf{d},\varphi)$, $\tau\to 0$ and $\varepsilon \to 0$:
\[  \lim_{\varepsilon\to 0} \left(\lim_{N\to \infty} \frac{\widim'_\varepsilon\left(\mathcal{X},\mathbf{d}_{N}, S_{N}\varphi\right)}{N}\right)
   \leq \mdim_{\mathrm{H}}(\mathcal{X},T,\mathbf{d},\varphi). \]
By Corollary \ref{cor: definition of mean dimension with potential by small local dimension}, this proves 
$\mdim(\mathcal{X},T,\varphi) \leq  \mdim_{\mathrm{H}}(\mathcal{X},T,\mathbf{d},\varphi)$.
\end{proof}

\begin{remark}
The above proof actually shows 
$\mdim(\mathcal{X},T,\varphi) \leq \underline{\mdim}_{\mathrm{H}}(\mathcal{X},T,\mathbf{d},\varphi)$.
\end{remark}

\section{Dynamical Frostman's lemma: proofs of Theorem \ref{theorem: dynamical Frostman's lemma} and 
Corollary \ref{corollary: mean dimension, rate distortion dimension and metric mean dimension}}
\label{section: dynamical Frostman's lemma}

Here we prove Theorem \ref{theorem: dynamical Frostman's lemma} (a version of dynamical Frostman's lemma) and Corollary 
\ref{corollary: mean dimension, rate distortion dimension and metric mean dimension}.
The first two subsections are preparations.

\subsection{Tame growth of covering numbers}  \label{subsection: tame growth of covering numbers}

Let $(\mathcal{X},\mathbf{d})$ be a compact metric space.
For $\varepsilon>0$ we define $\#(\mathcal{X},\mathbf{d},\varepsilon)$ as the minimum cardinarity of 
open covers $\mathcal{U}$ of $\mathcal{X}$ satisfying $\diam \, U < \varepsilon$ for all $U\in \mathcal{U}$.
This is a special case of the quantity introduced in (\ref{eq: covering number with potential})
in \S \ref{subsection: main ingredients of the proof}.
Namely we have 
$\#(\mathcal{X},\mathbf{d},\varepsilon) = \#(\mathcal{X},\mathbf{d},0, \varepsilon)$.

\begin{definition}  \label{def: tame growth of covering numbers}
A compact metric space $(\mathcal{X},\mathbf{d})$ is said to have the \textbf{tame growth of covering numbers} if 
for any $\delta>0$ 
\[  \lim_{\varepsilon \to 0} \varepsilon^\delta \log \#(\mathcal{X},\mathbf{d},\varepsilon) = 0. \] 
\end{definition}

For example, the Euclidean metric on any compact subset of $\mathbb{R}^N$ has the tame growth of covering numbers.
The metric $\rho$ on $[0,1]^\mathbb{Z}$ defined by 
\begin{equation*}
    \rho(x,y) = \sum_{n\in\mathbb{Z}} 2^{-|n|} |x_n-y_n|
\end{equation*}
also satisfies the condition.

Indeed the tame growth of covering numbers is a fairly mild condition \cite[Lemma 3.10]{Lindenstrauss--Tsukamoto double VP}:

\begin{lemma} \label{lemma: tame growth is mild}
Let $(\mathcal{X},\mathbf{d})$ be a compact metric space.
There exists a metric $\mathbf{d}'$ on $\mathcal{X}$ (compatible with the topology) 
such that $\mathbf{d}'(x,y) \leq \mathbf{d}(x,y)$ and 
that $(\mathcal{X}, \mathbf{d}')$ has the tame growth of covering numbers.
In particular every compact metrizable space admits a metric having the tame growth of covering numbers.
\end{lemma}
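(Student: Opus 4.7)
The plan is to construct $\mathbf{d}'$ as the pullback of a weighted product metric on the Hilbert cube under a canonical $1$-Lipschitz embedding of $\mathcal{X}$. First I would fix a countable dense subset $\{x_n\}_{n\ge 1}$ of $\mathcal{X}$ and define $1$-Lipschitz functions $f_n\colon\mathcal{X}\to[0,1]$ by $f_n(x):=\min(\mathbf{d}(x,x_n),1)$. The family $\{f_n\}$ separates points, and I would set
\[
   \mathbf{d}'(x,y) \;:=\; \sum_{n=1}^{\infty} 2^{-n}\,|f_n(x)-f_n(y)|.
\]
Since each $f_n$ is $1$-Lipschitz, $\mathbf{d}'(x,y)\le \mathbf{d}(x,y)$. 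The function $\mathbf{d}'$ is a metric, and its compatibility with the topology follows from continuity of the $f_n$ together with the standard fact that a countable separating family of continuous functions on a compact Hausdorff space induces the original topology.

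To verify tame growth, I would observe that $F(x):=(f_n(x))_{n\ge 1}$ is an isometric embedding of $(\mathcal{X},\mathbf{d}')$ into $([0,1]^{\mathbb{N}},\rho)$, where $\rho(u,v):=\sum_n 2^{-n}|u_n-v_n|$. So it suffices to check that $([0,1]^{\mathbb{N}},\rho)$ has the tame growth of covering numbers, from which the same property for the subspace $F(\mathcal{X})$ is immediate. For any $\varepsilon>0$, set $N:=\lceil\log_2(2/\varepsilon)\rceil$ and partition $[0,1]^N$ into cubes of side $\varepsilon/2$ under the supremum metric. The preimages of these cubes under the projection $[0,1]^{\mathbb{N}}\to[0,1]^N$ cover the Hilbert cube; each preimage has $\rho$-diameter at most $\varepsilon/2+2^{-N}\le\varepsilon$; and their number is at most $(2/\varepsilon)^N=\exp\bigl(O(\log^2(1/\varepsilon))\bigr)$. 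Hence $\log\#([0,1]^{\mathbb{N}},\rho,\varepsilon)=O(\log^2(1/\varepsilon))$, which is $o(\varepsilon^{-\delta})$ for every $\delta>0$.

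The main conceptual obstacle is that the most obvious approach --- reparametrising $\mathbf{d}$ as $\psi\circ\mathbf{d}$ for a concave modulus $\psi$ vanishing very rapidly at $0$ --- cannot succeed, because any concave $\psi\colon[0,\infty)\to[0,\infty)$ with $\psi(0)=0$ satisfies $\psi(s)\ge s\,\psi(1)$ on $[0,1]$, so $\psi\circ\mathbf{d}$ is bilipschitz equivalent to $\mathbf{d}$ near the diagonal and preserves the covering asymptotics up to constants. The Hilbert-cube embedding sidesteps this by genuinely altering the metric structure, using the rapidly-decaying weights $2^{-n}$ to suppress covering growth. The final sentence of the lemma --- that every compact metrizable space admits a metric having the tame growth of covering numbers --- then follows by applying the construction to an arbitrary compatible metric on $\mathcal{X}$.
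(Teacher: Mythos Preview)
Your proposal is correct and takes essentially the same approach as the paper: both pick a countable dense subset $\{x_n\}$ and define $\mathbf{d}'(x,y)=\sum_n 2^{-n}|f_n(x)-f_n(y)|$ with $f_n(x)=\mathbf{d}(x,x_n)$ (you truncate at $1$, which is harmless). The paper's proof is a single sentence---it writes down the formula and asserts that the requirements are easy to check---whereas you supply the verification by embedding isometrically into $([0,1]^{\mathbb{N}},\rho)$ and computing $\log\#\bigl([0,1]^{\mathbb{N}},\rho,\varepsilon\bigr)=O(\log^2(1/\varepsilon))$; this extra detail, and your remark on why a concave reparametrisation $\psi\circ\mathbf{d}$ cannot work, are welcome but not required.
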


\begin{proof}
Take a countable dense subset $\{x_n\}_{n=1}^\infty$ in $\mathcal{X}$.
We define 
\[ \mathbf{d}'(x,y) = \sum_{n=1}^\infty 2^{-n} \left|\mathbf{d}(x,x_n)-\mathbf{d}(y,x_n)\right|. \]
It is easy to check that this satisfies the requirements.
\end{proof}

\subsection{$L^1$-mean Hausdorff dimension with potential}
\label{subsection: L^1-mean Hausdorff dimension with potential}

Let $(\mathcal{X},T)$ be a dynamical system with a metric $\mathbf{d}$.
For $N\geq 1$ we define a new metric $\overline{\mathbf{d}}_N$ on $\mathcal{X}$ by
\[ \overline{\mathbf{d}}_N(x,y)  = \frac{1}{N} \sum_{n=0}^{N-1} \mathbf{d}(T^n x, T^n y). \]
We are interested in this metric because it is closely related to the distortion condition 
\[  \mathbb{E}\left(\frac{1}{N} \sum_{n=0}^{N-1} \mathbf{d}(T^n X, Y_n)\right) < \varepsilon \]
used in the definition of rate distortion function (\S \ref{subsection: rate distortion theory}).

Let $\varphi:\mathcal{X}\to \mathbb{R}$ be a continuous function.
We define the \textbf{$L^1$-mean Hausdorff dimension with potential} by 
\[ \mdim_{\mathrm{H}, L^1}(\mathcal{X},T,\mathbf{d},\varphi) = 
    \lim_{\varepsilon\to 0} \left(\limsup_{N\to \infty} 
    \frac{\dim_{\mathrm{H}}(\mathcal{X},\overline{\mathbf{d}}_N, S_N\varphi, \varepsilon)}{N}\right). \]
Since $\overline{\mathbf{d}}_N\leq \mathbf{d}_N$, we always have 
\[  \mdim_{\mathrm{H},L^1}(\mathcal{X},T,\mathbf{d},\varphi)
     \leq   \mdim_{\mathrm{H}}(\mathcal{X},T,\mathbf{d},\varphi). \]

\begin{lemma}  \label{lemma: L^1 mean hausdorff dimension is equal to mean hausdorff dimension}
If $(\mathcal{X},\mathbf{d})$ has the tame growth of covering numbers then 
\[  \mdim_{\mathrm{H},L^1}(\mathcal{X},T,\mathbf{d},\varphi)
      =   \mdim_{\mathrm{H}}(\mathcal{X},T,\mathbf{d},\varphi). \]
\end{lemma}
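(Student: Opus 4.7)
The plan is to establish the non-trivial direction $\mdim_{\mathrm{H}}(\mathcal{X},T,\mathbf{d},\varphi) \leq \mdim_{\mathrm{H},L^1}(\mathcal{X},T,\mathbf{d},\varphi)$, following the strategy used in the $\varphi=0$ case (Lindenstrauss--Tsukamoto \cite{Lindenstrauss--Tsukamoto double VP}, Lemma on $L^1$ mean Hausdorff dimension) but tracking the extra potential factor. The key is to convert an efficient cover by sets of small $\overline{\mathbf{d}}_N$-diameter into a cover by sets of small $\mathbf{d}_N$-diameter, paying only a sub-exponential price that is controlled by the tame growth condition.

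First I would reduce to the case $s > \max_{\mathcal{X}}\varphi$, since otherwise the Hausdorff sums are trivially $\geq 1$. Fix any $s > \mdim_{\mathrm{H},L^1}(\mathcal{X},T,\mathbf{d},\varphi)$ with $s > \max\varphi$ and any $\tau>0$; I aim to show $\mdim_{\mathrm{H}}(\mathcal{X},T,\mathbf{d},\varphi) \leq s + \tau$. For a small $\varepsilon>0$, set $\eta = \varepsilon/3$, let $\mathcal{U}$ be a cover of $\mathcal{X}$ by $K := \#(\mathcal{X},\mathbf{d},\eta)$ sets of $\mathbf{d}$-diameter $<\eta$, and pick $\delta\in(0,\varepsilon)$ much smaller than $\varepsilon$ (precisely small enough to make the entropy and tame-growth quantities below tiny). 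For infinitely many $N$ the hypothesis gives $\mathcal{H}^{sN}_{\delta}(\mathcal{X},\overline{\mathbf{d}}_N,S_N\varphi)<1$, so choose an efficient cover $\mathcal{X} = \bigcup_i E_i$ with $r_i := \diam_{\overline{\mathbf{d}}_N}(E_i)<\delta$ and $\sum_i r_i^{\,sN-\sigma_i}<1$, where $\sigma_i := \sup_{E_i} S_N\varphi$.

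Next I would sub-cover each $E_i$ into pieces of small $\mathbf{d}_N$-diameter by a combinatorial labeling argument: choose a center $x_i\in E_i$; for each $y\in E_i$ the ``bad index set'' $B_y=\{\,n: \mathbf{d}(T^nx_i,T^ny)\geq \eta\,\}$ satisfies $|B_y|\leq N r_i/\eta$ because $\sum_{n=0}^{N-1}\mathbf{d}(T^nx_i,T^ny)\leq Nr_i$; classifying $y$ by the pair $(B_y,(U_n)_{n\in B_y})$ with $U_n\in\mathcal{U}$ chosen so $T^n y\in U_n$ partitions $E_i$ into pieces of $\mathbf{d}_N$-diameter $\leq 2\eta<\varepsilon$. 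The number of pieces is bounded by $M_i \leq 2^{N H(r_i/\eta)}\,K^{Nr_i/\eta}$, and tame growth together with the choice of $\delta\ll\eta$ makes $\tfrac{1}{N}\log M_i$ uniformly tiny as $\varepsilon\to 0$. Combining gives
\[
\mathcal{H}^{(s+\tau)N}_{\varepsilon}(\mathcal{X},\mathbf{d}_N,S_N\varphi)
\leq \sum_i M_i\,(2\eta)^{(s+\tau)N-\sigma_i}.
\]

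The main obstacle is comparing this to $\sum_i r_i^{\,sN-\sigma_i}<1$: the per-piece ratio is $M_i\,(2\eta/r_i)^{sN-\sigma_i}(2\eta)^{\tau N}$, and the factor $(2\eta/r_i)^{sN-\sigma_i}$ can blow up because $sN-\sigma_i$ ranges up to $N(s+\norm{\varphi}_\infty)$ and $r_i$ may be much smaller than $\eta$. I plan to handle this by splitting the index set $\{i\}$ according to a threshold $r_i \gtrless \varepsilon^{\,1+\tau/(2(s+\norm{\varphi}_\infty))}$. For $r_i$ above the threshold, the blow-up $(\varepsilon/r_i)^{N(s+\norm{\varphi}_\infty)}$ is at most $\varepsilon^{-\tau N/2}$, which is absorbed by the $(2\eta)^{\tau N}\leq \varepsilon^{\tau N}$ saving (times $M_i=2^{o(N)}$). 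For $r_i$ below the threshold, the crude sub-cover at scale $\eta$ is wasteful; I would instead re-cover those $E_i$'s at the $r_i$-adapted scale $\eta_i = r_i/\alpha$ for a fixed small $\alpha$, so that the resulting pieces have $\mathbf{d}_N$-diameter $\lesssim r_i \ll \varepsilon$ and $M'_i\leq 2^{NH(\alpha)+\alpha N \log \#(\mathcal{X},\mathbf{d},r_i/\alpha)}$ is again $2^{o(N)}$ by tame growth; here the extra $r_i^{\tau N}$ factor from $r_i$ being small dominates the bounded-per-$N$ constants $(2/\alpha)^{(s+\tau+\norm{\varphi}_\infty)N}$. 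Summing both cases shows the new Hausdorff sum is $<1$, which gives $\dim_{\mathrm{H}}(\mathcal{X},\mathbf{d}_N,S_N\varphi,\varepsilon)\leq (s+\tau)N$ for infinitely many $N$, hence $\mdim_{\mathrm{H}}(\mathcal{X},T,\mathbf{d},\varphi)\leq s+\tau$. Letting $\tau\to 0$ and $s\downarrow \mdim_{\mathrm{H},L^1}$ concludes the proof; the heart of the argument is the two-scale sub-covering and the quantitative interplay between $\delta$, $\eta$, the tame-growth estimate on $\log K$, and the $\norm{\varphi}_\infty$-dependent range of $sN-\sigma_i$.
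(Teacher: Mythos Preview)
Your overall strategy---Chebyshev on the bad-index set followed by refinement via labels from a $\mathbf{d}$-cover---is exactly the one the paper uses, and your above-threshold case is essentially correct. The gap is in the below-threshold case. With a \emph{fixed} small $\alpha$, the multiplicity bound
\[
M'_i \leq 2^{NH(\alpha)}\,\#(\mathcal{X},\mathbf{d},r_i/\alpha)^{N\alpha}
\]
is \emph{not} $2^{o(N)}$ uniformly in $r_i$: tame growth only gives $\log\#(\mathcal{X},\mathbf{d},\varepsilon)=o(\varepsilon^{-\gamma})$ for every $\gamma>0$, so as $r_i\to 0$ the term $\alpha\log\#(\mathcal{X},\mathbf{d},r_i/\alpha)$ may grow much faster than the logarithmic saving $\tau\log(1/r_i)$ you want to absorb it with (consider, e.g., $\log\#(\mathcal{X},\mathbf{d},\varepsilon)\sim(\log(1/\varepsilon))^{2}$). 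Your claimed inequality $M'_i\,(2/\alpha)^{(s+\tau+\norm{\varphi}_\infty)N}\,r_i^{\tau N}\leq 1$ therefore fails for small $r_i$.

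The paper fixes this by making the bad-set \emph{fraction} adaptive rather than the scale alone: it takes the Chebyshev threshold at height $L_n\tau_n$ with $L_n=\tau_n^{-\delta}$, so the bad set has size at most $N/L_n=N\tau_n^{\delta}$, and refines on those indices using a $\mathbf{d}$-cover at scale $\tau_n$ itself. Then the tame growth condition in its literal form, $\tau_n^{\delta}\log\#(\mathcal{X},\mathbf{d},\tau_n)<\log 2$, immediately gives $M(\tau_n)^{N\tau_n^{\delta}}<2^{N}$ \emph{uniformly} in $\tau_n$. The resulting pieces have $\mathbf{d}_N$-diameter at most $2\tau_n^{1-\delta}$, and one estimates $\mathcal{H}^{\,sN+2\delta(sN-\min_{\mathcal{X}}S_N\varphi)+\delta N}_{2\varepsilon^{1-\delta}}(\mathcal{X},\mathbf{d}_N,S_N\varphi)$; the extra $\delta$-dependent pieces of the exponent vanish after dividing by $N$ and sending $\delta\to 0$. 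In short, replace your fixed $\alpha$ by $\alpha=r_i^{\delta}$ and the argument goes through in one stroke without the threshold split. (A minor point: since $\mdim_{\mathrm{H}}$ is defined with $\limsup_N$, you need the Hausdorff-sum bound for \emph{all} large $N$, not just infinitely many; fortunately $s>\mdim_{\mathrm{H},L^1}$ does give this.)
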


\begin{proof}
It is enough to prove $\mdim_{\mathrm{H}}(\mathcal{X},T,\mathbf{d},\varphi)  \leq
    \mdim_{\mathrm{H},L^1}(\mathcal{X},T,\mathbf{d},\varphi)$.
We use the notations $[N] := \{0,1,2,\dots, N-1\}$ and $\mathbf{d}_A(x,y) := \max_{a\in A} \mathbf{d}(T^a x, T^a y)$
for $A\subset [N]$.

Let $0<\delta<1/2$ and $s>\mathrm{\mdim}_{\mathrm{H},L^1}(\mathcal{X},T,\mathbf{d},\varphi)$ be arbitrary.
For each $\tau>0$ we choose an open cover 
$\mathcal{X}= W_1^\tau\cup \dots \cup W_{M(\tau)}^\tau$ with 
$\diam\left(W_i^\tau, \mathbf{d}\right) < \tau$ and $M(\tau) = \#(\mathcal{X},\mathbf{d},\tau)$.
From the tame growth condition, we can find $0<\varepsilon_0<1$ such that 
\begin{eqnarray}
  && M(\tau)^{\tau^\delta} < 2 \quad (\forall 0<\tau<\varepsilon_0), \label{eq: choice of varepsilon_0 1}  \\ 
  && 2^{2+\delta +(1+2\delta)(s+\norm{\varphi}_\infty)} \cdot \varepsilon_0^{\delta(1-\delta)} < 1.  \label{eq: choice of varepsilon_0 2}
\end{eqnarray}

Let $0<\varepsilon<\varepsilon_0$ be a sufficiently small number, and let $N$ be a sufficiently large natural number.
Since $\mdim_{\mathrm{H}, L^1}(\mathcal{X},T,\mathbf{d},\varphi) < s$, there exists a covering 
$\mathcal{X} = \bigcup_{n=1}^\infty E_n$ with $\tau_n := \diam (E_n, \overline{\mathbf{d}}_N) < \varepsilon$ satisfying 
\begin{equation}  \label{eq: choice of tau_n in the proof of L^1 mean hausdorff dimension is equal to mean hausdorff dimension}
     \sum_{n=1}^\infty \tau_n^{sN-\sup_{E_n} S_N\varphi} < 1 , \quad (sN \geq \max_{\mathcal{X}} S_N\varphi). 
\end{equation}     
Set $L_n = (1/\tau_n)^\delta$ and pick a point $x_n\in E_n$ for each $n$.
Then every $x\in E_n$ satisfies $\overline{\mathbf{d}}_N(x,x_n) <\tau_n$ and hence 
\[  \left|\{k\in [N] |\, \mathbf{d}(T^k x, T^k y) \geq L_n \tau_n\}\right| \leq \frac{N}{L_n}. \]
So there exists $A\subset [N]$ (depending on $x\in E_n$) such that $|A|\leq N/L_n$ and 
$\mathbf{d}_{[N]\setminus A}(x,x_n) < L_n \tau_n$. 
Thus 
\[  E_n \subset \bigcup_{A\subset [N], |A|\leq N/L_n}  B^\circ_{L_n \tau_n} (x_n,\mathbf{d}_{[N]\setminus A}), \]
where $B^\circ_{L_n \tau_n} (x_n,\mathbf{d}_{[N]\setminus A})$ is the open ball of radius $L_n \tau_n$ around $x_n$
with respect to the metric $\mathbf{d}_{[N]\setminus A}$.

Let $A= \{a_1,\dots, a_r\}$. 
We consider a decomposition
\[  B^\circ_{L_n \tau_n} (x_n,\mathbf{d}_{[N]\setminus A})  = \bigcup_{1\leq i_1,\dots, i_r\leq M(\tau_n)}
    B^\circ_{L_n \tau_n} (x_n,\mathbf{d}_{[N]\setminus A}) \cap T^{-a_1} W_{i_1}^{\tau_n} \cap \dots \cap T^{-a_r} W^{\tau_n}_{i_r}. \]
Then $\mathcal{X}$ is covered by the sets 
\begin{equation} \label{eq: auxiliary covering in the proof of L^1 mean hausdorff is equal to mean hausdorff}
    E_n \cap  B^\circ_{L_n \tau_n} (x_n,\mathbf{d}_{[N]\setminus A}) \cap T^{-a_1} W_{i_1}^{\tau_n} \cap \dots \cap T^{-a_r} W^{\tau_n}_{i_r}, 
\end{equation}    
where $n \geq 1$, $A= \{a_1,\dots, a_r\}\subset [N]$ with $r\leq N/L_n$ and $1\leq i_1,\dots, i_r\leq M(\tau_n)$.
The sets (\ref{eq:  auxiliary covering in the proof of L^1 mean hausdorff is equal to mean hausdorff}) 
have diameter less than or equal to $2L_n \tau_n = 2 \tau_n^{1-\delta} < 2\varepsilon^{1-\delta}$
with respect to the metric $\mathbf{d}_{N}$.

Set $m_N= \min_{\mathcal{X}} S_N\varphi$.
We estimate the quantity 
\[  \mathcal{H}^{sN +2\delta(sN - m_N)+\delta N}_{2\varepsilon^{1-\delta}} (\mathcal{X}, \mathbf{d}_N, S_N \varphi).  \]
This is bounded by 
\begin{equation*}
    \sum_{n=1}^\infty 2^N \cdot M(\tau_n)^{N/L_n} \cdot
     \left(2\tau_n^{1-\delta}\right)^{sN + 2\delta(sN-m_N)+\delta N -\sup_{E_n} S_N\varphi}. 
\end{equation*}     
The factor $2^N$ comes from the choice of $A\subset [N]$.
Since $\tau_n < \varepsilon < \varepsilon_0$
\begin{equation*}
   \begin{split}
       \left(2\tau_n^{1-\delta}\right)^{sN + 2\delta(sN-m_N)+\delta N -\sup_{E_n} S_N\varphi}
      & =   \left(2\tau_n^{1-\delta}\right)^{sN + 2\delta(sN-m_N)-\sup_{E_n} S_N\varphi} \cdot 
           \left(2\tau_n^{1-\delta}\right)^{\delta N}  \\
     & \leq       \left(2\tau_n^{1-\delta}\right)^{sN + 2\delta(sN-m_N)-\sup_{E_n} S_N\varphi} \cdot 
         \left(2^\delta\varepsilon_0^{\delta(1-\delta)}\right)^N.
   \end{split}
\end{equation*}    
The term $\left(2\tau_n^{1-\delta}\right)^{sN + 2\delta(sN-m_N)-\sup_{E_n} S_N\varphi}$ is equal to      
\begin{equation*}
   \underbrace{2^{sN + 2\delta (sN-m_N) -\sup_{E_n} S_N\varphi}}_{(I)} \cdot
   \underbrace{\tau_n^{2\delta(sN-m_N)-\delta\left\{sN+2\delta(sN-m_N)-\sup_{E_n} S_N\varphi\right\}}}_{(II)} 
   \cdot \tau_n^{sN-\sup_{E_n} S_N\varphi}.
\end{equation*}                                            
The factor $(I)$ is bounded by 
\[  2^{sN+2\delta (sN + \norm{\varphi}_\infty N) + \norm{\varphi}_\infty N} 
      = 2^{(1+2\delta)(s+\norm{\varphi}_\infty)N}. \] 
The exponent of the factor $(II)$ is bounded from below (note $0<\tau_n<1$) by 
\[  2\delta(sN-m_N) -\delta\left\{sN+2\delta(sN-m_N)-m_N\right\} 
    = \delta(1-2\delta)(sN-m_N) \geq 0.  \]
Here we have used $sN\geq \max_{\mathcal{X}} S_N\varphi \geq m_N$.
Hence the factor $(II)$ is less than or equal to $1$.
Summing up the above estimates, we get
\[     \left(2\tau_n^{1-\delta}\right)^{sN + 2\delta(sN-m_N)+\delta N -\sup_{E_n} S_N\varphi}
    \leq 2^{(1+2\delta)(s+\norm{\varphi}_\infty)N} \cdot  \left(2^\delta\varepsilon_0^{\delta(1-\delta)}\right)^N \cdot
    \tau_n^{sN-\sup_{E_n}S_N\varphi}. \]
Thus 
\begin{equation*}
  \begin{split}
  & \mathcal{H}^{sN +2\delta(sN - m_N)+\delta N}_{2\varepsilon^{1-\delta}} (\mathcal{X}, \mathbf{d}_N, S_N \varphi) \\
  & \leq  \sum_{n=1}^\infty \left\{2^{1+(1+2\delta)(s+\norm{\varphi}_\infty)} \cdot M(\tau_n)^{1/L_n} \cdot 
     \left(2^\delta\varepsilon_0^{\delta(1-\delta)}\right) \right\}^N 
           \cdot  \tau_n^{sN-\sup_{E_n} S_N\varphi}  \\
    &  \leq   \sum_{n=1}^\infty \left\{2^{2+\delta+(1+2\delta)(s+\norm{\varphi}_\infty)}\cdot \varepsilon_0^{\delta(1-\delta)}\right\}^N 
               \cdot  \tau_n^{sN-\sup_{E_n} S_N\varphi}  
      \quad   \text{by $1/L_n = \tau_n^\delta$ and (\ref{eq: choice of varepsilon_0 1})} \\
         & \leq \sum_{n=1}^\infty   \tau_n^{sN-\sup_{E_n} S_N\varphi}    
         \quad \text{by (\ref{eq: choice of varepsilon_0 2})} \\
     & < 1   \quad \text{by (\ref{eq: choice of tau_n in the proof of L^1 mean hausdorff dimension is equal to mean hausdorff dimension})}.  
  \end{split}   
\end{equation*}
Therefore 
\begin{equation*}
   \begin{split}
   \dim_{\mathrm{H}}(\mathcal{X},\mathbf{d}_N,S_N\varphi,2\varepsilon^{1-\delta}) & \leq 
   sN+ 2\delta(sN-m_N) + \delta N  \\
   & \leq s N + 2\delta (sN + \norm{\varphi}_\infty N) + \delta N.
   \end{split}
\end{equation*}   
Divide this by $N$. 
Let $N\to \infty $ and $\varepsilon \to 0$:
\[  \mdim_{\mathrm{H}}(\mathcal{X},T,\mathbf{d},\varphi) \leq 
     s + 2\delta(s+\norm{\varphi}_\infty) + \delta. \]   
Let $\delta\to 0$ and $s\to \mdim_{\mathrm{H},L^1}(\mathcal{X},T,\mathbf{d},\varphi)$:
\[  \mdim_{\mathrm{H}}(\mathcal{X},T,\mathbf{d},\varphi)  \leq
    \mdim_{\mathrm{H},L^1}(\mathcal{X},T,\mathbf{d},\varphi). \]
\end{proof}

\begin{remark}
The same argument also proves that $\underline{\mdim}_{\mathrm{H}}(\mathcal{X},T,\mathbf{d},\varphi)$ is equal to 
\[  \lim_{\varepsilon\to 0}\left(\liminf_{N\to \infty}
     \frac{\dim_{\mathrm{H}}(\mathcal{X},\overline{\mathbf{d}}_N, S_N\varphi, \varepsilon)}{N}\right) \]
     if $(\mathcal{X},\mathbf{d})$ has the tame growth of covering numbers.
\end{remark}

\subsection{Proof of Theorem \ref{theorem: dynamical Frostman's lemma}}
\label{subsection: proof of dynamical Frostman's lemma}

We need the next two lemmas for proving Theorem \ref{theorem: dynamical Frostman's lemma}.
Let $(\mathcal{X},\mathbf{d})$ be a compact metric space.
For $\varepsilon>0$ and $s\geq 0$ we set 
$\mathcal{H}^s_\varepsilon(\mathcal{X},\mathbf{d}) = \mathcal{H}^s_\varepsilon(\mathcal{X},\mathbf{d}, 0)$.
Namely 
 \[ \mathcal{H}^s_\varepsilon(\mathcal{X},\mathbf{d}) = 
    \inf \left\{\sum_{i=1}^\infty \left(\diam E_i\right)^{s} \middle|
     \mathcal{X} = \bigcup_{i=1}^\infty E_i \text{ with } \diam E_i < \varepsilon 
                        \text{ for all $i\geq 1$}\right\}. \]
We define $\dim_{\mathrm{H}}(\mathcal{X},\mathbf{d},\varepsilon)$ as the supremum of $s\geq 0$ satisfying 
$\mathcal{H}^s_\varepsilon(\mathcal{X},\mathbf{d}) \geq 1$.

\begin{lemma} \label{lemma: Howroyd Frostman's lemma}
Let $0<c<1$. There exists $0<\delta_0(c)<1$ depending only on $c$ and satisfying the following statement.
For any compact metric space $(\mathcal{X},\mathbf{d})$ and $0<\delta\leq \delta_0(c)$
there exists a Borel probability measure $\nu$ on $\mathcal{X}$ such that 
\[  \nu(E) \leq \left(\diam E\right)^{c \cdot \dim_{\mathrm{H}}(\mathcal{X},\mathbf{d},\delta)} \quad 
    \text{for all $E\subset \mathcal{X}$ with $\diam E < \frac{\delta}{6}$}. \]
\end{lemma}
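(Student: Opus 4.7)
The plan is to adapt Howroyd's construction of Frostman measures on compact metric spaces to the present finite-scale setting. Set $s = \dim_{\mathrm{H}}(\mathcal{X}, \mathbf{d}, \delta)$, and view the desired $\nu$ as arising from a linear-programming duality between measures obeying a Frostman-type mass bound and covers with controlled $cs$-Hausdorff content.

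Concretely, let $\Phi$ be the supremum of $\mu(\mathcal{X})$ over all finite nonnegative Borel measures $\mu$ on $\mathcal{X}$ satisfying $\mu(E) \leq (\diam E)^{cs}$ whenever $\diam E < \delta/6$. It suffices to show $\Phi \geq 1$; normalizing a near-maximizer then yields the required $\nu$. First I would discretize by choosing a finite Borel partition of $\mathcal{X}$ into cells of diameter much smaller than $\delta/6$, so the constraints reduce to a finite list. The resulting finite LP has as its primal this discretized mass-distribution problem and as its dual a fractional covering problem with objective $\sum_E \lambda_E (\diam E)^{cs}$. Strong LP duality, combined with a rounding step from fractional to integer covers, identifies the common optimum with $\mathcal{H}^{cs}_{\delta/6}(\mathcal{X}, \mathbf{d})$ up to an error tending to $1$ as the partition refines. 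Passing to a weak-$*$ limit along a refining sequence of partitions and using
$$\mathcal{H}^{cs}_{\delta/6}(\mathcal{X}, \mathbf{d}) \geq \mathcal{H}^{cs}_\delta(\mathcal{X}, \mathbf{d}) \geq 1 -$$
the first inequality because any cover at scale $\delta/6$ is also a cover at scale $\delta$, and the second because $cs < s = \dim_{\mathrm{H}}(\mathcal{X}, \mathbf{d}, \delta)$ by definition of $\dim_{\mathrm{H}}$ -- will give the desired bound $\Phi \geq 1$.

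The main obstacle is the rounding step connecting the LP dual (a fractional cover) to the Hausdorff content $\mathcal{H}^{cs}_{\delta/6}$ (an honest cover): one must recover an integer cover from a fractional one without losing a multiplicative constant depending on $(\mathcal{X},\mathbf{d})$. This is exactly where the strict inequality $c < 1$ and the threshold $\delta_0(c)$ are used. Shrinking the Frostman exponent from $s$ to $cs$ provides a factor of $(\diam E)^{(1-c)s}$ at each cover element, which can absorb the rounding loss uniformly once $\delta \leq \delta_0(c)$, with $\delta_0(c)$ depending only on $c$. The factor of $6$ in $\delta/6$ leaves room for mildly inflating small cover sets during rounding without exceeding the scale $\delta$ at which the ambient Hausdorff content $\mathcal{H}^{cs}_\delta$ is controlled, thereby closing the argument.
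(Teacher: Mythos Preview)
Your high-level framework matches what the paper indicates: the lemma is a finite-scale Frostman result proved by Howroyd's method, and the paper simply refers to \cite{Howroyd} and \cite[Corollary 4.4]{Lindenstrauss--Tsukamoto double VP} for details. Setting up $\Phi$ as the maximal total mass subject to the Frostman constraint and invoking LP/Hahn--Banach duality to identify $\Phi$ with the weighted Hausdorff content $\Lambda^{cs}_{\delta/6}(\mathcal{X},\mathbf{d})$ is exactly the Howroyd mechanism.

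There is, however, a genuine gap in your sketch at the ``rounding'' step. The LP dual value is the \emph{weighted} (fractional) content $\Lambda^{cs}_{\delta/6}$, and the trivial inequality runs the wrong way for you: fractional covers are more general than integer covers, so $\Lambda^{cs}_{\delta/6}\le \mathcal{H}^{cs}_{\delta/6}$. Thus your chain $\Phi \overset{?}{\ge} \mathcal{H}^{cs}_{\delta/6}\ge \mathcal{H}^{cs}_{\delta}\ge 1$ fails at the first step as written. The substance of Howroyd's contribution is precisely a reverse comparison in arbitrary compact metric spaces, obtained via a combinatorial covering lemma (in the spirit of Davies--Rogers--Kelly) that extracts an honest cover from a fractional one with controlled loss. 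Your paragraph about $(\diam E)^{(1-c)s}$ absorbing the rounding loss is the right \emph{shape} of the endgame---once one has an inequality of the form $\mathcal{H}^{s}_{\delta}\le A^{s}\,\Lambda^{s}_{\delta/K}$ (or a Frostman measure with constant $A^{s}$), the gap between exponents $s$ and $cs$ kills the constant for $\delta\le \delta_0(c)=K\cdot A^{-1/(1-c)}$---but you have not supplied the comparison itself, and it does not follow from strong LP duality or from refining the partition. That combinatorial lemma (or an equivalent tree/mass-distribution construction, as carried out in \cite[\S4]{Lindenstrauss--Tsukamoto double VP}) is the missing ingredient; once you state and prove it, the role of $c<1$, the factor $6$, and the uniform threshold $\delta_0(c)$ fall out exactly as you describe.
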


\begin{proof}
This follows from Howroyd's approach \cite{Howroyd} to Frostman's lemma
for general compact metric spaces.
See \cite[Corollary 4.4]{Lindenstrauss--Tsukamoto double VP} for the details.                                                                                                                                                                                                                                                                                                        
\end{proof}

\begin{lemma}  \label{lemma: optimal transport}
Let $A$ be a finite set.
Suppose that probability measures $\mu_n$ on $A$ converge to some $\mu$ in the weak$^*$ topology.
Then there exist probability measures $\pi_n$ $(n\geq 1)$ on $A\times A$ such that 
\begin{itemize}
 \item $\pi_n$ is a coupling between $\mu_n$ and $\mu$. Namely the first and second marginals of $\pi_n$ are
          given by $\mu_n$ and $\mu$ respectively.
 \item  $\pi_n$ converge to $(\mathrm{id}\times \mathrm{id})_* \mu$ in the weak$^*$ topology. Namely 
         \[ \pi_n(a,b) \to \begin{cases}
                               0   &(a\neq b) \\  
                               \mu(a)  &(a=b)
                               \end{cases}. \] 
\end{itemize}
\end{lemma}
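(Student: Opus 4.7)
The plan is to construct the couplings $\pi_n$ explicitly by putting as much mass as possible on the diagonal and then distributing the residual mass off-diagonal via a product-type construction. Since $A$ is finite, weak$^*$ convergence $\mu_n \to \mu$ is the same as pointwise convergence $\mu_n(a) \to \mu(a)$ for every $a\in A$, so we may work directly with these numbers.

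For each $n$, define $m_n(a) = \min\bigl(\mu_n(a), \mu(a)\bigr)$ for $a\in A$, and set
\[
   \varepsilon_n = 1 - \sum_{a\in A} m_n(a) = \sum_{a\in A} \bigl(\mu_n(a)-\mu(a)\bigr)^+ = \sum_{a\in A} \bigl(\mu(a)-\mu_n(a)\bigr)^+.
\]
When $\varepsilon_n=0$ we simply take $\pi_n=(\mathrm{id}\times\mathrm{id})_*\mu$. Otherwise, let
\[
   r_n^1(a) = \bigl(\mu_n(a)-\mu(a)\bigr)^+, \qquad r_n^2(b) = \bigl(\mu(b)-\mu_n(b)\bigr)^+,
\]
so that both $r_n^1$ and $r_n^2$ are nonnegative measures on $A$ of total mass $\varepsilon_n$. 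I would then define
\[
   \pi_n(a,b) = m_n(a)\,\delta_{a=b} + \frac{r_n^1(a)\,r_n^2(b)}{\varepsilon_n}.
\]
A direct computation confirms that the marginals of $\pi_n$ are $\mu_n$ on the first factor and $\mu$ on the second, so $\pi_n$ is indeed a coupling.

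For the convergence, note that pointwise convergence $\mu_n(a)\to\mu(a)$ forces $m_n(a)\to\mu(a)$ and $r_n^1(a), r_n^2(a)\to 0$ for each $a$; in particular $\varepsilon_n\to 0$. For $a\neq b$ we get
\[
   \pi_n(a,b) \le \frac{r_n^1(a)\,r_n^2(b)}{\varepsilon_n} \le \max_{c\in A} r_n^2(c) \to 0,
\]
since $r_n^1(a)\le \varepsilon_n$. For the diagonal point $(a,a)$, the first term contributes $m_n(a)\to\mu(a)$ and the second term is bounded by $r_n^2(a)\to 0$. Therefore $\pi_n\to(\mathrm{id}\times\mathrm{id})_*\mu$ weakly, as required.

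There is no real obstacle: the argument is essentially a bookkeeping exercise built on the well-known fact that any two probability measures share a diagonal coupling of mass $1 - \tfrac{1}{2}\|\mu_n-\mu\|_{\mathrm{TV}}$. The only point worth being slightly careful about is the degenerate case $\varepsilon_n=0$, handled separately above.
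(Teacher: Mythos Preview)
Your construction is correct: the marginals check out, and the convergence follows exactly as you argue (in fact $r_n^1(a)\,r_n^2(a)=0$ always, so the diagonal term is simply $m_n(a)$). This is the standard \emph{maximal coupling}, which places mass $1-\tfrac12\|\mu_n-\mu\|_{\mathrm{TV}}$ on the diagonal.

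The paper itself does not give a self-contained argument: it invokes the general fact that the Wasserstein distance metrizes weak$^*$ convergence (so couplings realising small transport cost exist and must concentrate on the diagonal), and also points to an elementary proof in the appendix of \cite{Lindenstrauss--Tsukamoto rate distortion}. Your explicit construction is presumably close in spirit to that elementary proof; compared to the Wasserstein route it has the advantage of being completely hands-on and not requiring any outside machinery, at the cost of being specific to finite $A$ (which is all that is needed here).
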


\begin{proof}
This follows from a general fact on optimal transport that the Wasserstein distance metrizes the weak$^*$ topology
\cite[Theorem 6.9]{Villani}.
See \cite[Appendix]{Lindenstrauss--Tsukamoto rate distortion} for an elementary proof.
\end{proof}

Theorem \ref{theorem: dynamical Frostman's lemma} is contained in the following statement.
Recall that we have denoted by $\mathscr{M}^T(\mathcal{X})$ the set of invariant probability measures on a dynamical system 
$(\mathcal{X},T)$.

\begin{theorem}[$\supset$ Theorem \ref{theorem: dynamical Frostman's lemma}] \label{theorem: dynamical Frostman's lemma refined}
Let $(\mathcal{X},T)$ be a dynamical system with a metric $\mathbf{d}$ and
a continuous function $\varphi:\mathcal{X}\to \mathbb{R}$.
Then 
\[  \mdim_{\mathrm{H}, L^1}(\mathcal{X},T,\mathbf{d},\varphi) 
    \leq \sup_{\mu\in \mathscr{M}^T(\mathcal{X})} 
    \left(\underline{\rdim}(\mathcal{X},T,\mathbf{d},\mu) + \int_{\mathcal{X}} \varphi d\mu\right). \]
In particular (by Lemma \ref{lemma: L^1 mean hausdorff dimension is equal to mean hausdorff dimension}) 
if $(\mathcal{X},\mathbf{d})$ has the tame growth of covering numbers then 
\[  \mdim_{\mathrm{H}}(\mathcal{X}, T, \mathbf{d},\varphi) \leq 
     \sup_{\mu\in \mathscr{M}^T(\mathcal{X})} 
    \left(\underline{\rdim}(\mathcal{X},T,\mathbf{d},\mu) + \int_{\mathcal{X}} \varphi d\mu\right). \]
\end{theorem}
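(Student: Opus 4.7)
The plan is to adapt the proof of the case $\varphi = 0$ in \cite{Lindenstrauss--Tsukamoto double VP}. Fix $s < \mdim_{\mathrm{H},L^1}(\mathcal{X},T,\mathbf{d},\varphi)$, a Howroyd parameter $c \in (0,1)$, and a small $\tau > 0$. My goal will be to produce $\mu \in \mathscr{M}^T(\mathcal{X})$ satisfying
\[ \underline{\rdim}(\mathcal{X},T,\mathbf{d},\mu) + c\!\int \varphi \, d\mu \;\geq\; c(s-\tau). \]
Sending $c \nearrow 1$, $\tau \searrow 0$, and $s \nearrow \mdim_{\mathrm{H},L^1}$ will then conclude (the coefficient $c$ in front of $\int \varphi \, d\mu$ becomes $1$ in the limit $c \to 1$).

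The first step is a ``Frostman lemma with potential.'' By definition of $\mdim_{\mathrm{H},L^1}$, for every sufficiently small $\varepsilon > 0$ there exist arbitrarily large $N$ with $\mathcal{H}^{sN}_\varepsilon(\mathcal{X}, \overline{\mathbf{d}}_N, S_N\varphi) \geq 1$. I partition $\mathcal{X}$ into the $O(1/\tau)$ level sets $\mathcal{X}^{(N)}_j := \{a_j \leq S_N\varphi/N < a_j + \tau\}$ and refine any candidate cover along this partition. Using the monotonicity $(\diam E)^{sN - \sup_E S_N\varphi} \leq (\diam E)^{N(s - a_j - \tau)}$ valid for $\diam E < 1$, a pigeonhole/contradiction argument over the finitely many level sets produces a fixed index $j^\star$ (depending on $N$ and $\varepsilon$) with $\mathcal{H}^{N(s - a_{j^\star} - \tau)}_\varepsilon(\mathcal{X}^{(N)}_{j^\star}, \overline{\mathbf{d}}_N) \gtrsim \tau$. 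Applying Lemma \ref{lemma: Howroyd Frostman's lemma} on $\mathcal{X}^{(N)}_{j^\star}$ then yields a Borel probability measure $\nu_N$ supported there with the Frostman bound
\[ \nu_N(E) \leq \bigl(\diam(E,\overline{\mathbf{d}}_N)\bigr)^{cN(s - a_{j^\star} - \tau) - O_\tau(1)} \]
for $E$ of sufficiently small $\overline{\mathbf{d}}_N$-diameter.

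Next I convert this scaling into a lower bound on block mutual information. The orbit map $x \mapsto (x, Tx, \ldots, T^{N-1}x)$ is an isometry from $(\mathcal{X}, \overline{\mathbf{d}}_N)$ onto its image in $\bigl(\mathcal{X}^N, \tfrac{1}{N}\sum \mathbf{d}(\cdot,\cdot)\bigr)$, so the pushforward of $\nu_N$ inherits the Frostman bound. Applying Lemma \ref{lemma: geometry and mutual information} on $\mathcal{X}^N$ to the pair $(\tilde X_N, Y)$ with $\tilde X_N = (X_N, TX_N, \ldots, T^{N-1}X_N)$ (note $I(\tilde X_N; Y) = I(X_N; Y)$, since $\tilde X_N$ is a deterministic bijective function of $X_N$) yields, for any random tuple $Y = (Y_0, \ldots, Y_{N-1})$ with $X_N \sim \nu_N$ and $\mathbb{E}\tfrac{1}{N}\sum_k \mathbf{d}(T^k X_N, Y_k) < \varepsilon$,
\[ \frac{I(X_N; Y)}{N} \;\geq\; c(s - a_{j^\star} - \tau)\log(1/\varepsilon) - O_\tau(1). \]
Setting $\mu_N := \tfrac{1}{N}\sum_{k=0}^{N-1}(T^k)_*\nu_N \in \mathscr{M}^T(\mathcal{X})$, the identity $\int\varphi\,d\mu_N = \tfrac{1}{N}\int S_N\varphi\,d\nu_N \in [a_{j^\star}, a_{j^\star}+\tau]$ gives $a_{j^\star} \leq \int \varphi\,d\mu_N$, so the bound rewrites as $I(X_N; Y)/N \geq c(s - \int \varphi\,d\mu_N - \tau)\log(1/\varepsilon) - O_\tau(1)$.

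The final step transfers this block estimate to an invariant rate-distortion bound. Extract a weak-$*$ convergent subsequence $\varepsilon_k \to 0$, $N_k \to \infty$ with $\mu_{N_k} \to \mu \in \mathscr{M}^T(\mathcal{X})$, so $\int \varphi\,d\mu_{N_k} \to \int \varphi\,d\mu$. Given an arbitrary admissible coupling $(\tilde X, \tilde Y)$ computing $R(\mathbf{d}, \mu, \varepsilon_k)$, I would first discretize $\tilde Y$ using Remark \ref{remark: rate distortion function}, then transport via Lemma \ref{lemma: optimal transport} and the weak-$*$ convergence to a finite-range coupling whose $X$-marginal is $\mu_{N_k}$ (controlling $I$ by Lemma \ref{lemma: convergence of mutual information} and distortion directly), and finally decompose along the orbit structure $\mu_{N_k} = \tfrac{1}{N_k}\sum_j (T^j)_*\nu_{N_k}$ via a random time-shift, using convexity and subadditivity of mutual information (Lemmas \ref{lemma: convexity of mutual information}, \ref{lemma: subadditivity of mutual information}) to reduce back to the block estimate of the previous step. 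Dividing by $\log(1/\varepsilon_k)$ and letting $k \to \infty$ then yields $\underline{\rdim}(\mathbf{d}, \mu) + c\int \varphi\,d\mu \geq c(s - \tau)$, and the claimed sequence of limits completes the proof. The main technical obstacle is precisely this last transfer: since $R(\mathbf{d}, \cdot, \varepsilon)$ is only lower semi-continuous in the source measure, bounds on $R(\mathbf{d}, \mu_{N_k}, \varepsilon_k)$ do not pass to the weak-$*$ limit automatically, and one must work at the level of specific admissible couplings, transporting them across the limit while preserving the information-theoretic estimates.
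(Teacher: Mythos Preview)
Your overall architecture matches the paper's: decompose into level sets of $S_N\varphi/N$, apply Howroyd--Frostman on one level set, then run a Misiurewicz-type transfer to pass from the block Frostman estimate to a rate-distortion lower bound for the limiting invariant measure. The pigeonhole step and the use of Lemmas~\ref{lemma: Howroyd Frostman's lemma}, \ref{lemma: geometry and mutual information}, \ref{lemma: optimal transport}, \ref{lemma: subadditivity of mutual information}, \ref{lemma: convexity of mutual information} are all the right ingredients.

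There is, however, a genuine gap in the way you organize the limits. You take a \emph{diagonal} sequence $(\varepsilon_k,N_k)$ with $\varepsilon_k\to 0$, build the Frostman measures $\nu_{N_k}$ at Hausdorff scale $\varepsilon_k$, and then attempt to bound $R(\mathbf{d},\mu,\varepsilon_k)$. This fails for two linked reasons. First, $\underline{\rdim}(\mathbf{d},\mu)$ is a $\liminf$ over \emph{all} $\varepsilon\to 0$, so a bound along a single sequence $\varepsilon_k$ is insufficient. Second, and more seriously, the Misiurewicz transfer itself requires you to fix the distortion $\varepsilon$ and the block length $m$ of the admissible pair $(X,Y)$, and then send the approximation index $k\to\infty$: you need $\mathcal{P}^m_*\mu_{N_k}\to\mathcal{P}^m_*\mu$ to invoke Lemma~\ref{lemma: optimal transport} and Lemma~\ref{lemma: convergence of mutual information}, and you need $N_k\gg m$ for the convexity/subadditivity reduction. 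But as $k\to\infty$ your Frostman scale $\varepsilon_k\to 0$, so eventually $\varepsilon_k$ is smaller than the threshold $2\varepsilon\log(1/\varepsilon)$ required in Lemma~\ref{lemma: geometry and mutual information}, and the Frostman bound for $\nu_{N_k}$ no longer controls sets at the distortion scale $\varepsilon$. In short, you cannot keep $\varepsilon$ fixed while letting $k\to\infty$, and you cannot let both vary together and still run the transport argument.

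The repair is exactly what the paper does: fix a \emph{single} scale $\delta\in(0,\delta_0(c))$ once and for all (using that $\mdim_{\mathrm{H},L^1}>s$ already at some fixed scale), and build all the Frostman measures $\nu_k$ at that fixed scale $\delta$. Then a second pigeonhole over the finitely many level-set indices produces a fixed $t$ (independent of $k$) with $\dim_{\mathrm{H}}\bigl((S_{n_k}\varphi/n_k)^{-1}[t,t+\eta],\overline{\mathbf{d}}_{n_k},\delta\bigr)\geq (s-t)n_k$ for infinitely many $k$. Now $\mu=\lim_k\mu_k$ is a single measure, and for \emph{every} $\varepsilon$ with $2\varepsilon\log(1/\varepsilon)\leq\delta/10$ the transfer goes through with $k\to\infty$, yielding $R(\mathbf{d},\mu,\varepsilon)\geq c(s-t)\log(1/\varepsilon)-C$ and hence $\underline{\rdim}(\mathbf{d},\mu)\geq c(s-t)$. (Minor: $\mu_N=\tfrac{1}{N}\sum_k T^k_*\nu_N$ is not in $\mathscr{M}^T(\mathcal{X})$; only its weak-$*$ limit is.)
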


\begin{proof}
We extend the definition of $\overline{\mathbf{d}}_n$.
For $x=(x_0,x_1,\dots,x_{n-1})$ and $y=(y_0,y_1,\dots,y_{n-1})$ in $\mathcal{X}^n$, we set 
\[  \overline{\mathbf{d}}_n(x,y) = \frac{1}{n}\sum_{i=0}^{n-1} \mathbf{d}(x_i,y_i). \]

Let $0<c<1$ and $s<\mdim_{\mathrm{H},L^1}(\mathcal{X},T,\mathbf{d},\varphi)$ be arbitrary.
We will construct an invariant probability measure $\mu$ on $\mathcal{X}$ satisfying 
\begin{equation} \label{eq: invariant measure capturing dynamical complexity}
   \underline{\rdim}(\mathcal{X},T,\mathbf{d},\mu) + \int_{\mathcal{X}} \varphi \, d\mu 
   \geq c s - (1-c) \norm{\varphi}_\infty.
\end{equation}
Letting $c\to 1$ and $s\to \mdim_{\mathrm{H},L^1}(\mathcal{X},T,\mathbf{d},\varphi)$, we get the statement of the theorem.

Take $\eta>0$ satisfying $\mdim_{\mathrm{H},L^1}(\mathcal{X},T,\mathbf{d},\varphi) -2\eta> s$.
Let $\delta_0 = \delta_0(c)\in (0,1)$ be a constant given by Lemma \ref{lemma: Howroyd Frostman's lemma}.
There exist $0<\delta<\delta_0$ and a sequence $n_1<n_2<n_3<\dots \to \infty$ satisfying 
\[  \dim_{\mathrm{H}}(\mathcal{X},\overline{\mathbf{d}}_{n_k},S_{n_k}\varphi,\delta) > (s+2\eta)n_k. \]

\begin{claim}
There exists $t\in [-\norm{\varphi}_\infty, \norm{\varphi}_\infty]$ such that for infinitely many $n_k$
\[   \dim_{\mathrm{H}}\left(\left(\frac{S_{n_k}\varphi}{n_k}\right)^{-1}[t,t+\eta], \overline{\mathbf{d}}_{n_k}, \delta\right) 
      \geq (s-t)n_k. \]
\end{claim}

\begin{proof}
We have $\mathcal{H}_\delta^{(s+2\eta)n_k}(\mathcal{X}, \overline{\mathbf{d}}_{n_k},S_{n_k}\varphi) \geq 1$.
Set $m = \lceil 2\norm{\varphi}_\infty/\eta \rceil$ and consider 
\[  \mathcal{X} = \bigcup_{l=0}^{m-1} \left(\frac{S_{n_k}\varphi}{n_k}\right)^{-1}
     \left[-\norm{\varphi}_\infty + l\eta, -\norm{\varphi}_\infty + (l+1)\eta\right]. \]
Then there exists $t\in \{-\norm{\varphi}_\infty + l\eta |\, l=0,1,\dots, m-1\}$ such that for infinitely many 
$n_k$
\[  \mathcal{H}^{(s+2\eta)n_k}_\delta\left(\left(\frac{S_{n_k}\varphi}{n_k}\right)^{-1}[t,t+\eta], \overline{\mathbf{d}}_{n_k}, S_{n_k}\varphi\right)
     \geq \frac{1}{m}. \]
Since $(s+2\eta)n_k - S_{n_k}\varphi \geq (s+2\eta)n_k - (t+\eta)n_k = (s-t)n_k + \eta n_k$ 
on the set $(S_{n_k}\varphi/n_k)^{-1}[t,t+\eta]$, 
\begin{equation*}
   \begin{split}
    \mathcal{H}^{(s+2\eta)n_k}_\delta\left(\left(\frac{S_{n_k}\varphi}{n_k}\right)^{-1}[t,t+\eta], \overline{\mathbf{d}}_{n_k}, S_{n_k}\varphi\right)
    &\leq  \mathcal{H}^{(s-t)n_k+\eta n_k}_\delta
    \left(\left(\frac{S_{n_k}\varphi}{n_k}\right)^{-1}[t,t+\eta], \overline{\mathbf{d}}_{n_k}\right)  \\
    &\leq \delta^{\eta n_k}     \cdot
     \mathcal{H}^{(s-t)n_k}_\delta \left(\left(\frac{S_{n_k}\varphi}{n_k}\right)^{-1}[t,t+\eta], \overline{\mathbf{d}}_{n_k}\right).
   \end{split}
\end{equation*}   
Hence for infinitely many $n_k$
\[   \mathcal{H}^{(s-t)n_k}_\delta \left(\left(\frac{S_{n_k}\varphi}{n_k}\right)^{-1}[t,t+\eta], \overline{\mathbf{d}}_{n_k}\right)
      \geq \frac{\delta^{-\eta n_k}}{m}. \]
The right-hand side is larger than one for sufficiently large $n_k$. Then for such $n_k$
\[   \dim_{\mathrm{H}}\left(\left(\frac{S_{n_k}\varphi}{n_k}\right)^{-1}[t,t+\eta], \overline{\mathbf{d}}_{n_k}, \delta\right) 
      \geq (s-t)n_k. \]
\end{proof}

By choosing a subsequence of $\{n_k\}$ (also denoted by $\{n_k\}$), we assume that the condition
\[   \dim_{\mathrm{H}}\left(\left(\frac{S_{n_k}\varphi}{n_k}\right)^{-1}[t,t+\eta], \overline{\mathbf{d}}_{n_k}, \delta\right) 
      \geq (s-t)n_k. \]
holds for all $n_k$.
Noting $0<\delta<\delta_0(c)$, we apply Lemma \ref{lemma: Howroyd Frostman's lemma} to 
the subspace $\left(\frac{S_{n_k}\varphi}{n_k}\right)^{-1}[t,t+\eta] \subset \mathcal{X}$.
Then we can find a Borel probability measure $\nu_k$ supported on $\left(\frac{S_{n_k}\varphi}{n_k}\right)^{-1}[t,t+\eta]$ such that 
\begin{equation}  \label{eq: scaling law of measure nu_k}
    \nu_k(E) \leq \left(\diam(E, \overline{\mathbf{d}}_{n_k}) \right)^{c(s-t)n_k} \quad 
    \text{for all $E\subset \mathcal{X}$ with $\diam (E, \overline{\mathbf{d}}_{n_k}) < \frac{\delta}{6}$}.
\end{equation}
Notice that $\nu_k$ is not necessarily invariant under $T$.
Set 
\[  \mu_k = \frac{1}{n_k} \sum_{n=0}^{n_k-1} T^n_* \nu_k. \]
By choosing a subsequence (also denoted by $\{n_k\}$ again) we can assume that 
$\mu_k$ converges to some $\mu\in \mathscr{M}^T(\mathcal{X})$ in the weak$^*$ topology.
Then 
\[  \int_{\mathcal{X}}\varphi \, d\mu_k \to \int_{\mathcal{X}} \varphi \, d\mu \quad (k\to \infty).  \]
On the other hand
\begin{equation*}
    \int_{\mathcal{X}}\varphi \, d\mu_k  = \frac{1}{n_k} \sum_{n=0}^{n_k-1} \int_{\mathcal{X}} \varphi \circ T^n \, d\nu_k
    = \int_{\mathcal{X}} \frac{S_{n_k} \varphi}{n_k} \, d\nu_k \geq t 
\end{equation*}
since $\nu_k$ is supported on the set $(S_{n_k}\varphi/n_k)^{-1}[t,t+\eta]$.
Hence 
\[   \int_{\mathcal{X}} \varphi \, d\mu \geq t. \]

We will prove 
\begin{equation} \label{eq: rate distortion dimension of mu}
     \underline{\rdim}(\mathcal{X},T,\mathbf{d},\mu) \geq c(s-t). 
\end{equation}     
Assuming this for the moment, we get (\ref{eq: invariant measure capturing dynamical complexity}) 
(recall $|t|\leq \norm{\varphi}_\infty$): 
\[  \underline{\rdim}(\mathcal{X},T,\mathbf{d},\mu) + \int_{\mathcal{X}}\varphi \, d\mu 
    \geq c(s-t) + t = c s + (1-c) t \geq cs -(1-c)\norm{\varphi}_\infty. \]
So the rest of the problem is to prove (\ref{eq: rate distortion dimension of mu}).
This part of the proof is the same as \cite[Section 4.3]{Lindenstrauss--Tsukamoto double VP}.
The method is a ``rate distortion theory version'' of Misiurewicz's technique \cite{Misiurewicz}
(a famous proof of the standard variational principle) first developed in \cite{Lindenstrauss--Tsukamoto rate distortion}.
The paper \cite[Section 4.3]{Lindenstrauss--Tsukamoto double VP} explained more background ideas behind the proof, which 
we do not repeat here.

Let $\varepsilon$ be an arbitrary positive number with $2\varepsilon \log(1/\varepsilon) \leq \delta/10$.
We will show a lower bound on the rate distortion function of the form 
\[  R(\mathbf{d},\mu,\varepsilon) \geq c(s-t) \log(1/\varepsilon) + \text{small error terms}. \]
Let $X$ and $Y=(Y_0,Y_1,\dots, Y_{m-1})$ be random variables defined on a probability space $(\Omega, \mathbb{P})$
such that $X, Y_0,\dots, Y_{m-1}$ take values in $\mathcal{X}$ and satisfy 
\[  \mathrm{Law} X = \mu, \quad \mathbb{E}\left(\frac{1}{m} \sum_{j=0}^{m-1} \mathbf{d}(T^j X, Y_j) \right) < \varepsilon. \]
We would like to establish a lower bound on the mutual information $I(X;Y)$.
For this purpose (see Remark \ref{remark: rate distortion function}), 
we can assume that $Y$ takes only finitely many values.
Let $\mathcal{Y}\subset \mathcal{X}^m$ be the (finite) set of possible values of $Y$. 

We choose $\tau>0$ satisfying 
\begin{equation} \label{eq: choice of tau in the proof of dynamical Frostman's lemma}
  \tau \leq \min\left(\frac{\varepsilon}{3},\frac{\delta}{20}\right), \quad 
  \frac{\tau}{2} + \mathbb{E}\left(\frac{1}{m} \sum_{j=0}^{m-1} \mathbf{d}(T^j X, Y_j)\right) < \varepsilon. 
\end{equation}
We take a measurable partition $\mathcal{P}= \{P_1,\dots, P_L\}$ of $\mathcal{X}$ such that 
for all $1\leq l\leq L$
\[  \diam (P_l, \mathbf{d}) < \frac{\tau}{2}, \quad 
    \mu(\partial P_l) = 0. \]
Pick $p_l\in P_l$ and set $A=\{p_1,\dots, p_L\}$.
We define $\mathcal{P}:\mathcal{X}\to A$ by $\mathcal{P}(P_l) = \{p_l\}$.
For $n\geq 1$ we define $\mathcal{P}^n:\mathcal{X}\to A^n$ by 
$\mathcal{P}^n(x) = \left(\mathcal{P}(x), \mathcal{P}(T x), \dots, \mathcal{P}(T^{n-1}x)\right)$.

\begin{claim} \label{claim: pushforward measure has the scaling law}
The pushforward measure $\mathcal{P}^{n_k}_*\nu_k$ satisfies 
\[   \mathcal{P}^{n_k}_*\nu_k(E) \leq \left(\tau + \diam(E, \overline{\mathbf{d}}_{n_k})\right)^{c(s-t)n_k} \quad 
      \text{for all $E\subset A^{n_k}$ with $\diam (E,\overline{\mathbf{d}}_{n_k}) < \frac{\delta}{10}$}. \]
\end{claim}

\begin{proof}
From $\diam (P_l, \mathbf{d}) < \tau/2$ and $\tau \leq \delta/20$, if $\diam (E,\overline{\mathbf{d}}_{n_k}) < \delta/10$ then
\[ \diam \left(\left(\mathcal{P}^{n_k}\right)^{-1}E, \overline{\mathbf{d}}_{n_k}\right) 
    < \tau + \diam (E, \overline{\mathbf{d}}_{n_k}) < \frac{\delta}{6}. \]
By (\ref{eq: scaling law of measure nu_k}), the measure 
$\mathcal{P}^{n_k}_*\nu_k(E) = \nu_k\left(\left(\mathcal{P}^{n_k}\right)^{-1}E\right)$ is bounded by 
\[    \left(\diam \left(\left(\mathcal{P}^{n_k}\right)^{-1}E, \overline{\mathbf{d}}_{n_k}\right)\right)^{c(s-t)n_k}
       < \left(\tau + \diam (E, \overline{\mathbf{d}}_{n_k})\right)^{c(s-t)n_k}. \]
\end{proof}

From $\mu_k\to \mu$ and $\mu(\partial P_l) =0$, we have $\mathcal{P}^{m}_*\mu_k\to \mathcal{P}^m_*\mu$.
By Lemma \ref{lemma: optimal transport}, there exists a coupling $\pi_k$ between $\mathcal{P}^m_*\mu_k$ and 
$\mathcal{P}^m_*\mu$ such that $\pi_k\to (\mathrm{id}\times \mathrm{id})_*\mathcal{P}^m_*\mu$.
Let $X(k)$ be a random variable coupled to $\mathcal{P}^m(X)$ such that it takes values in $A^m$ and
$\mathrm{Law}\left(X(k),\mathcal{P}^m(X)\right) = \pi_k$. In particular, $\mathrm{Law} X(k) = \mathcal{P}^m_*\mu_k$.
From $\pi_k\to (\mathrm{id}\times \mathrm{id})_*\mathcal{P}^m_*\mu$,
\[  \mathbb{E} \overline{\mathbf{d}}_m\left(X(k),\mathcal{P}^m(X)\right) \to 0. \]
The random variables $X(k)$ and $Y$ are coupled by the probability mass function 
\[  \sum_{x'\in A^m} \pi_k(x,x') \mathbb{P}(Y=y| \mathcal{P}^m(X)=x') \quad (x\in A^m, y\in \mathcal{Y}), \]
which converges to $\mathbb{P}(\mathcal{P}^m(X)=x, Y=y)$. Then by Lemma \ref{lemma: convergence of mutual information}
\begin{equation} \label{eq: convergence of mutual information between X(k) and Y}
     I\left(X(k);Y\right) \to I\left(\mathcal{P}^m(X);Y\right).
\end{equation}
By the triangle inequality 
\begin{equation*}
   \begin{split}
   \overline{\mathbf{d}}_m\left(X(k),Y\right)  \leq 
   & \overline{\mathbf{d}}_m\left(X(k), \mathcal{P}^m(X)\right) + \overline{\mathbf{d}}_m\left(\mathcal{P}^m(X), (X,TX,\dots, T^{m-1}X)\right)\\
   & + \overline{\mathbf{d}}_m\left((X,TX,\dots, T^{m-1}X), Y\right).
   \end{split}
\end{equation*}
We have $\mathbb{E}\overline{\mathbf{d}}_m\left(X(k),\mathcal{P}^m(X)\right) \to 0$, $\diam (P_l, \mathbf{d}) < \tau/2$ for all $1\leq l\leq L$
and $(\tau/2) + \mathbb{E} \overline{\mathbf{d}}_m\left((X,TX,\dots, T^{m-1}X), Y\right) < \varepsilon$ 
in (\ref{eq: choice of tau in the proof of dynamical Frostman's lemma}).
Then
\begin{equation}  \label{eq: expected distance between X(k) and Y is small}
  \mathbb{E} \overline{\mathbf{d}}_m\left(X(k),Y\right) < \varepsilon \quad \text{for sufficiently large $k$}.
\end{equation}

For $x=(x_0,\dots,x_{n-1})\in \mathcal{X}^n$ and $0\leq a\leq b < n$ we write
$x_a^b = (x_a, x_{a+1},\dots,x_b)$.
We consider a conditional probability mass function 
\[  \rho_k(y|x) = \mathbb{P}(Y=y|\, X(k)=x) \]
for $x,y\in \mathcal{X}^m$ with $\mathbb{P}(X(k)=x) = \mathcal{P}^m_*\mu_k(x) >0$.
Fix a point $a\in \mathcal{X}$. We denote by $\delta_a(\cdot)$ the delta probability measure at $a$ on $\mathcal{X}$.
Let $n_k= mq+r$ with $m \leq r < 2m$.
Let $x,y\in \mathcal{X}^{n_k}$ with $\mathcal{P}^{n_k}_*\nu_k(x)>0$.
For $0\leq j < m$ we define a conditional probability mass function 
\begin{equation} \label{eq: definition of conditional probability measure sigma_{k,j}}
     \sigma_{k,j}(y|x) = \prod_{i=0}^{q-1} \rho_k\left(y_{j+i m}^{j+i m+m-1}| x_{j+i m}^{j+i m+m-1}\right) \cdot 
                            \prod_{n\in [0,j)\cup [mq+j, n_k)} \delta_a(y_n). 
\end{equation}                            
Set 
\begin{equation} \label{eq: definition of conditional probability measure sigma_k}
    \sigma_k(y|x) = \frac{\sigma_{k,0}(y|x)+\sigma_{k,1}(y|x)+\dots+\sigma_{k,m-1}(y|x)}{m}. 
\end{equation}    

Let $X'(k)$ be a random variable taking values in $\mathcal{X}$ with $\mathrm{Law} X'(k) = \nu_k$.
Set $Z(k) = \mathcal{P}^{n_k}(X'(k))$.
We define a random variable $W(k)$ taking values in $\mathcal{X}^{n_k}$ and coupled to $Z(k)$ by the condition 
\[  \mathbb{P}\left(W(k) = y\middle| Z(k) = x\right) = \sigma_k(y|x). \] 
For $0\leq j <m$ we also define $W(k,j)$ by 
\[  \mathbb{P}\left(W(k,j) = y\middle| Z(k) = x\right) = \sigma_{k,j}(y|x). \]

\begin{claim}  \label{claim: distance between Z(k) and W(k)}
\[  \mathbb{E}\overline{\mathbf{d}}_{n_k}(Z(k), W(k)) < \varepsilon \quad 
     \text{for sufficiently large $k$}. \]
\end{claim}

\begin{proof}
From the definition of $\sigma_k$ in (\ref{eq: definition of conditional probability measure sigma_k})
\begin{equation} \label{eq: distance between Z(k) and W(k)}
    \mathbb{E}\overline{\mathbf{d}}_{n_k}(Z(k), W(k))  = \frac{1}{m} \sum_{j=0}^{m-1} 
     \mathbb{E} \overline{\mathbf{d}}_{n_k} (Z(k), W(k,j)). 
\end{equation}     
From $Z(k) = \mathcal{P}^{n_k}(X'(k))$, the distance $\overline{\mathbf{d}}_{n_k} (Z(k), W(k,j))$ is bounded by 
\[  \frac{r \cdot \diam (\mathcal{X},\mathbf{d})}{n_k} + \frac{m}{n_k}
     \sum_{i=0}^{q-1} \overline{\mathbf{d}}_m\left(\mathcal{P}^m(T^{j+im} X'(k)), W(k,j)_{j+im}^{j+im+m-1}\right). \]
From $\mathrm{Law} X'(k) = \nu_k$ and the definition of $\sigma_{k,j}$ in (\ref{eq: definition of conditional probability measure sigma_{k,j}}), 
\[  \mathbb{E} \overline{\mathbf{d}}_m\left(\mathcal{P}^m(T^{j+im} X'(k)), W(k,j)_{j+im}^{j+im+m-1}\right)
     = \sum_{x,y\in \mathcal{X}^m} \overline{\mathbf{d}}_m (x,y) \rho_k(y|x) \mathcal{P}_*^m T^{j+im}_*\nu_k(x), \]     
where the right-hand side is a finite sum because $\rho_k(y|x) \mathcal{P}_*^m T^{j+im}_*\nu_k(x)$
can be nonzero only for $x\in A^m$ and $y\in \mathcal{Y}$.
Hence (\ref{eq: distance between Z(k) and W(k)}) is bounded by 
\[  \frac{r\cdot \diam(\mathcal{X},\mathbf{d})}{n_k}  + 
    \underbrace{\sum_{x,y\in \mathcal{X}^m} \overline{\mathbf{d}}_m (x,y) \rho_k(y|x) 
     \left(\frac{1}{n_k}\sum_{\substack{0\leq i <q \\ 0\leq j < m}} \mathcal{P}^m_*T^{j+im}_*\nu_k(x)  \right)}_{(I)}. \]
The term $(I)$ is estimated by 
\begin{equation*}
   \begin{split}
   (I)  &\leq \sum_{x,y\in \mathcal{X}^m} \overline{\mathbf{d}}_m (x,y) \rho_k(y|x) 
     \left(\frac{1}{n_k}\sum_{n=0}^{n_k-1} \mathcal{P}^m_*T^{n}_*\nu_k(x)  \right) \\
   & = \sum_{x,y\in \mathcal{X}^m} \overline{\mathbf{d}}_m(x,y) \rho_k(y|x) \mathcal{P}^m_*\mu_k(x)
     \quad \text{by $\mu_k = \frac{1}{n_k} \sum_{n=0}^{n_k-1} T^n_*\nu_k$} \\
     & = \mathbb{E}\overline{\mathbf{d}}_m(X(k),Y).
   \end{split}
\end{equation*}   
Therefore 
\[   \mathbb{E}\overline{\mathbf{d}}_{n_k}(Z(k), W(k)) \leq  \frac{r\cdot \diam(\mathcal{X},\mathbf{d})}{n_k} +
      \mathbb{E}\overline{\mathbf{d}}_m(X(k),Y). \]
Recall $r\leq 2m$. The term $\mathbb{E}\overline{\mathbf{d}}_m(X(k),Y)$ is smaller than $\varepsilon$ for large $k$ by 
(\ref{eq: expected distance between X(k) and Y is small}).
Thus $\mathbb{E}\overline{\mathbf{d}}_{n_k}(Z(k), W(k)) < \varepsilon$ for large $k$.
\end{proof}

\begin{claim} \label{claim: mutual information between Z(k) and W(k)}
\[   \frac{1}{n_k} I(Z(k); W(k)) \leq \frac{1}{m} I(X(k);Y). \]
\end{claim}

\begin{proof}
The mutual information is a convex function of conditional probability measure (Lemma \ref{lemma: convexity of mutual information}).
Hence
\[  I(Z(k); W(k)) \leq \frac{1}{m} \sum_{j=0}^{m-1}I(Z(k); W(k,j)). \]
By the subadditivity under conditional independence 
(Lemma \ref{lemma: subadditivity of mutual information}),
\[  I(Z(k);W(k,j)) \leq \sum_{i=0}^{q-1} I(Z(k); W(k,j)_{j+im}^{j+im+m-1}). \]
The term $I(Z(k); W(k,j)_{j+im}^{j+im+m-1})$ is equal to 
\[ I\left(\mathcal{P}^m\left(T^{j+im}X'(k)\right); W(k,j)_{j+im}^{j+im+m-1}\right)
    = I(\mathcal{P}^m_*T^{j+im}_* \nu_k, \rho_k). \]
Therefore
\begin{equation*}
   \begin{split}
   \frac{m}{n_k} I\left(Z(k);W(k)\right)  \leq &
     \frac{1}{n_k} \sum_{\substack{0\leq j < m\\ 0\leq i<q}} I\left(\mathcal{P}^m_*T^{j+im}_*\nu_k, \rho_k\right) \\
     \leq &  \frac{1}{n_k} \sum_{n=0}^{n_k-1} I\left(\mathcal{P}^m_*T^n_*\nu_k, \rho_k\right) \\
     \leq &  I\left(\frac{1}{n_k}\sum_{n=0}^{n_k-1}\mathcal{P}^m_*T^n_*\nu_k, \rho_k\right) 
       \text{ by the concavity in Lemma \ref{lemma: convexity of mutual information}} \\
      =  & I\left(\mathcal{P}^m_*\mu_k, \rho_k\right)  \quad 
       \text{by $\mu_k = \frac{1}{n_k} \sum_{n=0}^{n_k-1} T^n_*\nu_k$} \\
      = &  I\left(X(k); Y\right).
   \end{split}
\end{equation*}   
\end{proof}

Recall $2\varepsilon \log(1/\varepsilon) \leq \delta/10$ and $\tau\leq \min(\varepsilon/3,\delta/20)$.
The measure $\mathrm{Law} Z(k) = \mathcal{P}^{n_k}_*\nu_k$ satisfies the ``scaling law'' given by 
Claim \ref{claim: pushforward measure has the scaling law}. 
Then we apply Lemma \ref{lemma: geometry and mutual information} to $(Z(k), W(k))$
with Claim \ref{claim: distance between Z(k) and W(k)} 
($\mathbb{E}\overline{\mathbf{d}}_{n_k}(Z(k),W(k)) < \varepsilon$ for $k\gg 1$), which provides
\[  I(Z(k); W(k)) \geq c(s-t) n_k \log(1/\varepsilon) - K \left(c(s-t) n_k + 1\right) \quad 
     \text{for large $k$}. \]
Here $K$ is a universal positive constant.
From Claim \ref{claim: mutual information between Z(k) and W(k)}, for large $k$
\[  \frac{1}{m} I(X(k);Y) \geq \frac{1}{n_k} I(Z(k);W(k)) \geq  c(s-t)  \log(1/\varepsilon) - K\left(c(s-t)  + \frac{1}{n_k}\right). \]
Since $I(X(k); Y) \to I\left(\mathcal{P}^m(X); Y\right)$ by (\ref{eq: convergence of mutual information between X(k) and Y}), 
we get 
\[  \frac{1}{m}  I\left(\mathcal{P}^m(X); Y\right) \geq c(s-t) \log (1/\varepsilon) - cK (s-t). \]
By the data-processing inequality (Lemma \ref{lemma: data-processing inequality})
\[ \frac{1}{m} I(X;Y) \geq \frac{1}{m}  I\left(\mathcal{P}^m(X); Y\right) \geq c(s-t) \log (1/\varepsilon) - cK (s-t). \]
This proves that for any $\varepsilon >0$ with $2\varepsilon \log(1/\varepsilon) \leq \delta/10$
\[  R(\mathbf{d},\mu,\varepsilon) \geq  c(s-t) \log (1/\varepsilon) - cK (s-t). \]
Thus we get (\ref{eq: rate distortion dimension of mu}):
\[  \underline{\rdim}(\mathcal{X},T,\mathbf{d},\mu)
    = \liminf_{\varepsilon\to 0} \frac{R(\mathbf{d},\mu,\varepsilon)}{\log(1/\varepsilon)}  \geq c (s-t). \]
This establishes the proof of the theorem.
\end{proof}

\subsection{Proof of Corollary \ref{corollary: mean dimension, rate distortion dimension and metric mean dimension}}
\label{subsection: proof of Corollary of dynamical Frostman's lemma}

\begin{corollary}[$=$ Corollary \ref{corollary: mean dimension, rate distortion dimension and metric mean dimension}]
 \label{corollary: mean dimension, rate distortion dimension and metric mean dimension restated}
 Let $(\mathcal{X},T)$ be a dynamical system with a metric $\mathbf{d}$ and a continuous function 
 $\varphi:\mathcal{X}\to \mathbb{R}$. Then
\begin{equation*}
  \begin{split}
    \mdim(\mathcal{X}, T, \varphi) & \leq  \sup_{\mu\in \mathscr{M}^T(\mathcal{X})}
    \left(\underline{\rdim}(\mathcal{X},T, \mathbf{d}, \mu) + \int_{\mathcal{X}} \varphi \, d\mu\right)  \\
   &  \leq  \sup_{\mu\in \mathscr{M}^T(\mathcal{X})} \left(\overline{\rdim}(\mathcal{X},T,\mathbf{d}, \mu) +
    \int_{\mathcal{X}} \varphi \, d\mu\right)
   \leq \overline{\mdim}_{\mathrm{M}}(\mathcal{X}, T, \mathbf{d}, \varphi).   
  \end{split} 
\end{equation*}
\end{corollary}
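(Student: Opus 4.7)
The three inequalities in the corollary are proved separately. The middle inequality is completely trivial: since $R(\mathbf{d},\mu,\varepsilon)$ is monotone in $\varepsilon$, one has $\underline{\rdim}(\mathcal{X},T,\mathbf{d},\mu)\leq\overline{\rdim}(\mathcal{X},T,\mathbf{d},\mu)$ pointwise in $\mu$, so adding $\int\varphi\,d\mu$ and taking $\sup_\mu$ preserves the inequality. The rightmost inequality is also immediate: by Proposition \ref{prop: metric mean dimension bounds rate distortion dimension restated}, $\overline{\rdim}(\mathcal{X},T,\mathbf{d},\mu)+\int\varphi\,d\mu\leq\overline{\mdim}_{\mathrm{M}}(\mathcal{X},T,\mathbf{d},\varphi)$ for every $\mu\in\mathscr{M}^T(\mathcal{X})$; taking the supremum over $\mu$ gives what is needed.

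The only real content is the leftmost inequality, and the subtlety is that we must prove it for an arbitrary metric $\mathbf{d}\in\mathscr{D}(\mathcal{X})$, whereas Theorem \ref{theorem: dynamical Frostman's lemma refined} requires the tame growth of covering numbers. To bridge this gap, I invoke Lemma \ref{lemma: tame growth is mild}: there exists a metric $\mathbf{d}'\in\mathscr{D}(\mathcal{X})$ with $\mathbf{d}'(x,y)\leq\mathbf{d}(x,y)$ for all $x,y$ such that $(\mathcal{X},\mathbf{d}')$ has the tame growth of covering numbers. For this auxiliary metric, Theorem \ref{theorem: mean Hausdorff dimension bounds mean dimension restated} together with Theorem \ref{theorem: dynamical Frostman's lemma refined} yields
\[
\mdim(\mathcal{X},T,\varphi)\leq\mdim_{\mathrm{H}}(\mathcal{X},T,\mathbf{d}',\varphi)\leq\sup_{\mu\in\mathscr{M}^T(\mathcal{X})}\Bigl(\underline{\rdim}(\mathcal{X},T,\mathbf{d}',\mu)+\int_{\mathcal{X}}\varphi\,d\mu\Bigr).
\]
Note that the left-hand side is a topological invariant and does not depend on the choice of metric.

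It remains to compare the rate distortion functions of $\mathbf{d}'$ and $\mathbf{d}$. Since $\mathbf{d}'\leq\mathbf{d}$, any pair of random variables $(X,Y)$ realizing the distortion constraint $\mathbb{E}\bigl(\frac{1}{N}\sum_{n=0}^{N-1}\mathbf{d}(T^n X,Y_n)\bigr)<\varepsilon$ automatically satisfies the same constraint with $\mathbf{d}'$ in place of $\mathbf{d}$. Hence the infimum defining $R(\mathbf{d}',\mu,\varepsilon)$ is taken over a larger family, giving $R(\mathbf{d}',\mu,\varepsilon)\leq R(\mathbf{d},\mu,\varepsilon)$ and therefore $\underline{\rdim}(\mathcal{X},T,\mathbf{d}',\mu)\leq\underline{\rdim}(\mathcal{X},T,\mathbf{d},\mu)$ for every $\mu$. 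Substituting this into the previous display yields
\[
\mdim(\mathcal{X},T,\varphi)\leq\sup_{\mu\in\mathscr{M}^T(\mathcal{X})}\Bigl(\underline{\rdim}(\mathcal{X},T,\mathbf{d},\mu)+\int_{\mathcal{X}}\varphi\,d\mu\Bigr),
\]
which is the remaining inequality.

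The only conceptual step is the tame-growth reduction in the first inequality; everything else is bookkeeping. I expect no real obstacle, since all three ingredients (mean Hausdorff bounds mean dimension, dynamical Frostman, and monotonicity of the rate distortion function under domination of metrics) have already been established in the previous sections.
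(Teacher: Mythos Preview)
Your proof is correct and follows essentially the same route as the paper: invoke Proposition \ref{prop: metric mean dimension bounds rate distortion dimension restated} for the rightmost inequality, and for the leftmost one pass via Lemma \ref{lemma: tame growth is mild} to an auxiliary metric $\mathbf{d}'\leq \mathbf{d}$ with tame growth, apply Theorem \ref{theorem: mean Hausdorff dimension bounds mean dimension restated} and Theorem \ref{theorem: dynamical Frostman's lemma refined} to $\mathbf{d}'$, and then use the domination $\mathbf{d}'\leq \mathbf{d}$ to return to $\mathbf{d}$. One cosmetic remark: the middle inequality $\underline{\rdim}\leq\overline{\rdim}$ is just $\liminf\leq\limsup$ and does not require monotonicity of $R(\mathbf{d},\mu,\varepsilon)$ in $\varepsilon$.
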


\begin{proof}
From Proposition \ref{prop: metric mean dimension bounds rate distortion dimension restated},
\[   \sup_{\mu\in \mathscr{M}^T(\mathcal{X})} \left(\overline{\rdim}(\mathcal{X},T,\mathbf{d}, \mu) +
    \int_{\mathcal{X}} \varphi \, d\mu\right)
   \leq \overline{\mdim}_{\mathrm{M}}(\mathcal{X}, T, \mathbf{d}, \varphi).   \]
From Lemma \ref{lemma: tame growth is mild}, we can find a metric $\mathbf{d}'$ on $\mathcal{X}$ such that 
$\mathbf{d}'(x,y) \leq \mathbf{d}(x,y)$ and that $(\mathcal{X},\mathbf{d}')$ has the tame growth of covering numbers.
Then 
\begin{equation*}
  \begin{split}
    \mdim(\mathcal{X}, T, \varphi) & \leq    \mdim_{\mathrm{H}}(\mathcal{X},T,\mathbf{d}',\varphi)   
    \quad  \text{by Theorem \ref{theorem: mean Hausdorff dimension bounds mean dimension restated}} \\
    & \leq    \sup_{\mu\in \mathscr{M}^T(\mathcal{X})}
    \left(\underline{\rdim}(\mathcal{X},T, \mathbf{d}', \mu) + \int_{\mathcal{X}} \varphi \, d\mu\right)  
   \quad \text{by Theorem \ref{theorem: dynamical Frostman's lemma refined}}    \\
   &  \leq  \sup_{\mu\in \mathscr{M}^T(\mathcal{X})} \left(\underline{\rdim}(\mathcal{X},T,\mathbf{d}, \mu) +
    \int_{\mathcal{X}} \varphi \, d\mu\right) \quad 
    \text{by $\mathbf{d}'(x,y) \leq \mathbf{d}(x,y)$}.
  \end{split} 
\end{equation*}
\end{proof}

\section{Dynamical Pontrjagin--Schnirelmann's theorem: proof of Theorem \ref{theorem: dynamical PS theorem}}
\label{section: dynamical PS theorem}

We prove Theorem \ref{theorem: dynamical PS theorem} here.
The proof is given in \S \ref{subsection: proof of dynamical PS theorem}.
The first two subsections are preparations.
This section is rather technically hard.
The paper \cite[Section 5.1]{Lindenstrauss--Tsukamoto double VP} explained more backgrounds.

\subsection{Preparations on combinatorial topology}  \label{subsection: preparations on combinatorial topology}

In this subsection we prepare some definitions and results about simplicial complex.
Recall that we have assumed that simplicial complexes are always finite (having only finitely many vertices).
Let $P$ be a simplicial complex.
We denote by $\ver(P)$ the set of vertices of $P$.
For a vertex $v$ of $P$ we define the \textbf{open star} $O_P(v)$ as the union of open simplexes of $P$ one of whose 
vertex is $v$. 
Here $\{v\}$ itself is an open simplex. So $O_P(v)$ is an open neighborhood of $v$, and $\{O_P(v)\}_{v\in \ver(P)}$
forms an open cover of $P$.
For a simplex $\Delta\subset P$ we set $O_P(\Delta) = \bigcup_{v\in \ver(\Delta)} O_P(v)$.

Let $P$ and $Q$ be simplicial complexes.
A map $f:P\to Q$ is said to be \textbf{simplicial} if for every simplex $\Delta\subset P$
the image $f(\Delta)$ is a simplex in $Q$ and 
\[ f\left(\sum_{v\in \ver(\Delta)} \lambda_v v \right)  = \sum_{v\in \ver(\Delta)} \lambda_v f(v), \]
where $0\leq \lambda_v \leq 1$ and $\sum_{v\in \ver(\Delta)} \lambda_v = 1$.

Let $V$ be a real vector space.
A map $f:P\to V$ is said to be \textbf{linear} if for every simplex $\Delta\subset P$
\[  f\left(\sum_{v\in \ver(\Delta)} \lambda_v v \right)  = \sum_{v\in \ver(\Delta)} \lambda_v f(v), \]
where $0\leq \lambda_v \leq 1$ and $\sum_{v\in \ver(\Delta)} \lambda_v = 1$.
We denote the space of linear maps $f:P\to V$ by $\mathrm{Hom}(P,V)$.
When $V$ is a Banach space, the space $\mathrm{Hom}(P,V)$ is topologized as a product space $V^{\ver(P)}$.

\begin{lemma}  \label{lemma: preparations on linear maps}
Let $(V,\norm{\cdot})$ be a Banach space and $P$ a simplicial complex.
  \begin{enumerate}
    \item  If $f:P\to V$ is a linear map with $\diam f(P)\leq 2$ then for any $0<\varepsilon \leq 1$
   \[  \#(f(P),\norm{\cdot},\varepsilon) \leq C(P)\cdot (1/\varepsilon)^{\dim P}. \]
    Here the left-hand side is the minimum cardinality of open covers $\mathcal{U}$ of $f(P)$
    satisfying $\diam U < \varepsilon$ for all $U\in \mathcal{U}$ 
    (see the beginning of \S \ref{subsection: tame growth of covering numbers}).
    $C(P)$ is a positive constant depending only on $\dim P$ and the number of simplexes of $P$.
    
    \item Suppose $V$ is infinite dimensional. Then the set 
    \begin{equation} \label{eq: the set of injective linear maps}
       \{f\in \mathrm{Hom}(P,V)|\,  \text{$f$ is injective} \} 
    \end{equation}   
    is dense in $\mathrm{Hom}(P,V)$.

    \item  Let $(\mathcal{X},\mathbf{d})$ be a compact metric space and $\varepsilon, \delta>0$.
    Let $\pi:\mathcal{X}\to P$ be a continuous map satisfying $\diam\, \pi^{-1}(O_P(v)) < \varepsilon$ for all 
    $v\in \ver(P)$.  Let $f:\mathcal{X}\to V$ be a continuous map such that 
    \[   \mathbf{d}(x,y) < \varepsilon \Longrightarrow \norm{f(x)-f(y)} < \delta. \]
    Then there exists a linear map $g:P\to V$ satisfying 
    \[  \norm{f(x)-g(\pi(x))} < \delta \]
    for all $x\in \mathcal{X}$.
    Moreover if $f(\mathcal{X})$ is contained in the open unit ball $B_1^\circ(V)$ then we can assume $g(P)\subset B_1^\circ (V)$.
   \end{enumerate}
\end{lemma}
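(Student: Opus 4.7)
The plan is to treat the three parts separately, as they are essentially independent. Part (1) is a purely combinatorial-geometric volume estimate; part (2) is a general-position argument that relies crucially on the infinite dimensionality of $V$; part (3) is the substantive construction and is the one that gets used later.

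For part (1), I would decompose $f(P)=\bigcup_{\Delta\subset P} f(\Delta)$ over the simplexes of $P$. Each $f(\Delta)$ is the convex hull of at most $\dim\Delta+1\leq \dim P+1$ points in $V$, so it lies in an affine subspace $W_\Delta\subset V$ of dimension at most $\dim P$, and inside $W_\Delta$ it is contained in a ball of radius $\leq 2$ (since $\diam f(P)\leq 2$). A ball of radius $2$ in a normed space of dimension $n\leq \dim P$ can be covered by at most $C_n(1/\varepsilon)^n$ sets of diameter less than $\varepsilon$, where $C_n$ is dimensional; summing over the simplexes of $P$ yields the estimate with a constant $C(P)$ that depends only on $\dim P$ and on the number of simplexes.

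For part (2), I would apply a Baire category argument in $\mathrm{Hom}(P,V)\cong V^{\ver(P)}$. A linear map $f$ fails to be injective precisely when one of finitely many \emph{bad} conditions holds: either for some simplex $\Delta\subset P$ the images $\{f(v)\}_{v\in\ver(\Delta)}$ are affinely dependent, or for some pair of simplexes $\Delta_1,\Delta_2$ sharing at most a proper common face the images $f(\Delta_1)$ and $f(\Delta_2)$ meet outside $f(\Delta_1\cap\Delta_2)$. Each such condition cuts out a closed subset of $V^{\ver(P)}$ with empty interior: because $V$ is infinite dimensional, any proper affine subspace of $V$ is nowhere dense, so perturbing one vertex image at a time away from the finitely many affine subspaces determined by the remaining vertices escapes the bad set. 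Intersecting the open dense complements over the finite list of bad conditions gives a residual set of injective linear maps.

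For part (3), given $\pi:\mathcal{X}\to P$ with $\diam\pi^{-1}(O_P(v))<\varepsilon$ for every vertex $v$, I would construct $g$ vertex by vertex. For each $v\in\ver(P)$ with $\pi^{-1}(O_P(v))\neq\emptyset$, pick any $x_v\in \pi^{-1}(O_P(v))$ and set $g(v)=f(x_v)$; for the other vertices fix a reference point $x_0\in\mathcal{X}$ and set $g(v)=f(x_0)$ (this preserves the containment in $B_1^\circ(V)$). Extend $g$ linearly. Now let $x\in\mathcal{X}$ and let $\Delta$ be the unique simplex whose open interior contains $\pi(x)$. Then $\pi(x)\in O_P(v)$ for every $v\in\ver(\Delta)$, so $x\in\pi^{-1}(O_P(v))$, which shows $\pi^{-1}(O_P(v))\neq\emptyset$ and, by the diameter hypothesis, $\mathbf{d}(x,x_v)<\varepsilon$, hence $\norm{f(x)-g(v)}=\norm{f(x)-f(x_v)}<\delta$. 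Writing $\pi(x)=\sum_{v\in\ver(\Delta)}\lambda_v v$ with $\lambda_v\geq 0$ and $\sum\lambda_v=1$, linearity of $g$ gives
\[
  \norm{f(x)-g(\pi(x))} = \Bigl\|\sum_v \lambda_v\bigl(f(x)-g(v)\bigr)\Bigr\| \leq \sum_v \lambda_v \norm{f(x)-g(v)} < \delta.
\]
If $f(\mathcal{X})\subset B_1^\circ(V)$, then every $g(v)$ lies in the open unit ball, and convexity of $B_1^\circ(V)$ yields $g(P)\subset B_1^\circ(V)$.

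The step that needs the most care is the vertex choice in part (3): the construction must be tolerant of $\pi$ not being surjective so that some open stars have empty preimage. The point is that vertices with empty preimage never appear as vertices of any simplex meeting $\pi(\mathcal{X})$, so they contribute nothing to the estimate at points $x\in \mathcal{X}$; the only vertices that matter are those of the simplex $\Delta$ containing $\pi(x)$, which automatically have nonempty preimage.
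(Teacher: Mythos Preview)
Your proof is correct and, for parts (1) and (3), essentially identical to the paper's argument (the paper only sketches and defers details to \cite{Lindenstrauss--Tsukamoto double VP}, but your construction of $g$ vertex by vertex and the convexity estimate are exactly what is intended).

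For part (2) your Baire category argument is correct but more elaborate than necessary. The paper observes that it suffices to consider the single stronger condition that \emph{all} vertex images $f(v_0),\dots,f(v_n)$ are affinely independent in $V$. This set is already dense in $\mathrm{Hom}(P,V)\cong V^{\ver(P)}$ (perturb one vertex at a time off the finite-dimensional affine span of the others, which is nowhere dense since $V$ is infinite dimensional), and any such $f$ is automatically injective because distinct points of $P$ have distinct barycentric coordinate vectors. Your finer decomposition into per-simplex and per-pair-of-simplexes bad sets reaches the same conclusion and would be the natural route if $V$ were finite dimensional of sufficiently large dimension; in the infinite-dimensional setting the paper's one-line shortcut is available and cleaner.
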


\begin{proof}
We sketch the proof.
See \cite[Lemma 5.3]{Lindenstrauss--Tsukamoto double VP} for the details.
The claim (1) is a direct calculation.
For (2), let $v_0,\dots, v_n$ be the vertexes of $P$. 
Since $V$ is infinite dimensional, the set 
\[ \{f\in \mathrm{Hom}(P,V)|\, f(v_0), \dots, f(v_n) \text{ are affinely independent}\} \]
is dense in $\mathrm{Hom}(P,V)$, and this is contained in (\ref{eq: the set of injective linear maps}).
For (3), let $v$ be a vertex of $P$.
Pick $x_v\in \pi^{-1}(O_P(v))$ and set $g(v) = f(x_v)$.
If $\pi^{-1}(O_P(v))=\emptyset$ then $g(v)$ may be an arbitrary point of $B_1^\circ (V)$.
We extend $g$ to a linear map from $P$ to $V$.
Then this map satisfies the requirements.
\end{proof}

Let $f:\mathcal{X}\to P$ be a continuous map from a topological space $\mathcal{X}$ to a simplicial complex $P$.
Recall that it is said to be essential if there is no proper subcomplex of $P$ containing $f(\mathcal{X})$
(see \S \ref{subsection: remarks on width dimension with potential}).
This is equivalent to the condition that for any simplex $\Delta\subset P$
\[  \bigcap_{v\in \ver(\Delta)} f^{-1}(O_P(v)) \neq \emptyset. \]

\begin{lemma}  \label{lemma: essential map}
Let $f:\mathcal{X}\to P$ be a continuous map from a topological space $\mathcal{X}$ to a simplicial complex $P$.
There exists a subcomplex $P'\subset P$ such that $f(\mathcal{X})\subset P'$ and $f:\mathcal{X}\to P'$ is 
essential.
\end{lemma}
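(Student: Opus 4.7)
The plan is to exploit finiteness: since the paper assumes all simplicial complexes have only finitely many simplexes, the poset of subcomplexes of $P$ containing $f(\mathcal{X})$ is finite. First I would form the collection
\[ \mathcal{S} = \{ Q \subset P \mid Q \text{ is a subcomplex and } f(\mathcal{X}) \subset Q \}. \]
This collection is nonempty because it contains $P$ itself, and it is finite because $P$ has only finitely many simplexes (hence only finitely many subcomplexes). So $\mathcal{S}$, ordered by inclusion, admits minimal elements; pick any minimal $P' \in \mathcal{S}$.

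Next I would verify essentiality. Suppose for contradiction that $f:\mathcal{X} \to P'$ is not essential. Then by definition there exists a proper subcomplex $P'' \subsetneq P'$ with $f(\mathcal{X}) \subset P''$. But then $P'' \in \mathcal{S}$ and $P'' \subsetneq P'$, contradicting the minimality of $P'$ in $\mathcal{S}$. Hence $f:\mathcal{X} \to P'$ is essential, as required.

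There is essentially no obstacle here — the argument is a one-line application of the finiteness of $P$, and no continuity or topology of $\mathcal{X}$ beyond the set-theoretic containment $f(\mathcal{X}) \subset P'$ is used. The only mild point worth mentioning is that the intersection of two subcomplexes of $P$ is again a subcomplex, so one could alternatively \emph{define}
\[ P' = \bigcap_{Q \in \mathcal{S}} Q, \]
which is the (unique) smallest subcomplex of $P$ containing $f(\mathcal{X})$, and check essentiality by the same one-line argument. Either formulation gives the lemma.
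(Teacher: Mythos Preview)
Your proof is correct and is essentially the same as the paper's: the paper simply says ``Take the minimal subcomplex $P'\subset P$ containing $f(\mathcal{X})$,'' which is exactly your second formulation via the intersection. Your minimal-element formulation is an equivalent way to reach the same $P'$, and the essentiality check you wrote out is the one-line argument implicit in the paper's proof.
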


\begin{proof}
Take the minimal subcomplex $P'\subset P$ containing $f(\mathcal{X})$.
\end{proof}

For two open covers $\mathcal{U}$ and $\mathcal{V}$ of $\mathcal{X}$, 
we say that $\mathcal{V}$ is a refinement of $\mathcal{U}$ (denoted by $\mathcal{U}\prec \mathcal{V}$)
if for every $V\in \mathcal{V}$ there exists $U\in \mathcal{U}$ containing $V$.

\begin{lemma} \label{lemma: preparation on simplicial map}
Let $\mathcal{X}$ be a topological space, $P$ and $Q$ simplicial complexes.
Let $\pi:\mathcal{X}\to P$ and $q_i:\mathcal{X}\to Q$ $(1 \leq i \leq N)$ be continuous maps.
We suppose that $\pi$ is essential and satisfies for all $1\leq i\leq N$ 
   \[   \left\{q_i^{-1}(O_Q(w))\right\}_{w\in \ver(Q)} \prec \left\{\pi^{-1}(O_P(v))\right\}_{v\in \ver(P)}
         \quad (\text{as open covers of $\mathcal{X}$}).  \]
Then there exist simplicial maps $h_i: P\to Q$ $(1\leq i \leq N)$ satisfying the following three conditions.
\begin{enumerate}
   \item For all $1\leq i\leq N$ and $x\in \mathcal{X}$ the two points $q_i(x)$ and $h_i(\pi(x))$ belong to the same simplex of 
           $Q$.
   \item Let $1\leq i\leq N$ and let $Q'\subset Q$ be a subcomplex.
           If a simplex $\Delta \subset P$ satisfies $\pi^{-1}\left(O_P(\Delta)\right) \subset q_i^{-1}\left(Q'\right)$ then 
           $h_i(\Delta) \subset Q'$.
   \item Let $\Delta\subset P$ be a simplex.
           If $q_i=q_j$ on $\pi^{-1}\left(O_P(\Delta)\right)$ then $h_i=h_j$ on $\Delta$.
\end{enumerate}
\end{lemma}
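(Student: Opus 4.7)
The plan is to mimic the classical simplicial approximation construction, but to make the $N$ vertex-level assignments canonical so that the three extra properties can be verified uniformly. Because $\{q_i^{-1}(O_Q(w))\}_{w \in \ver(Q)}$ refines $\{\pi^{-1}(O_P(v))\}_{v \in \ver(P)}$, for each $v \in \ver(P)$ the set
\[
  W_i(v) = \{w \in \ver(Q) \mid \pi^{-1}(O_P(v)) \subset q_i^{-1}(O_Q(w))\}
\]
is non-empty. I would fix once and for all a total ordering on $\ver(Q)$ and declare $h_i(v)$ to be the minimum element of $W_i(v)$. The key feature of this rule is that $h_i(v)$ depends on $q_i$ only through its restriction to $\pi^{-1}(O_P(v))$.

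Next I would check that this vertex map extends linearly to a simplicial map $h_i : P \to Q$. For any simplex $\Delta$ of $P$ with vertices $v_0,\dots,v_k$, essentiality of $\pi$ produces $x \in \bigcap_{l} \pi^{-1}(O_P(v_l))$; by construction $q_i(x) \in O_Q(h_i(v_l))$ for every $l$, so all $h_i(v_l)$ are vertices of the closed simplex of $Q$ containing $q_i(x)$, and therefore they span a face of that simplex.

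Properties (1) and (3) will then drop out. For (1), given $x \in \mathcal{X}$ and writing $\pi(x) = \sum_l \lambda_l v_l$ inside the open simplex with vertices $v_0,\dots,v_k$, the argument above places every $h_i(v_l)$ in the closure of the open simplex of $Q$ that contains $q_i(x)$, so the convex combination $h_i(\pi(x))$ lies in the same closed simplex as $q_i(x)$. For (3), the hypothesis $q_i = q_j$ on $\pi^{-1}(O_P(\Delta))$ restricts to equality on each $\pi^{-1}(O_P(v_l))$ for $v_l \in \ver(\Delta)$, hence $W_i(v_l) = W_j(v_l)$ and the canonical choice gives $h_i(v_l) = h_j(v_l)$; linearity then yields $h_i = h_j$ on $\Delta$.

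The hard part will be property (2), which requires combining essentiality with the subcomplex structure of $Q'$. Given $\pi^{-1}(O_P(\Delta)) \subset q_i^{-1}(Q')$ with $\Delta = [v_0,\dots,v_k]$, I would use essentiality once more to pick $x \in \bigcap_l \pi^{-1}(O_P(v_l)) \subset \pi^{-1}(O_P(\Delta))$, so that $q_i(x) \in Q'$. Letting $\Sigma$ be the simplex of $Q$ whose relative interior contains $q_i(x)$, the subcomplex $Q'$ is a disjoint union of open simplexes of $Q$, so $q_i(x) \in Q'$ forces $\Sigma \in Q'$. The simpliciality step already shows that every $h_i(v_l)$ is a vertex of $\Sigma$, so $h_i(\Delta)$ is a face of $\Sigma$, which lies in $Q'$ because subcomplexes are closed under faces.
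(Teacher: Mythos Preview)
Your argument is correct and follows essentially the same approach as the paper's proof: define $h_i$ on vertices via the inclusion $\pi^{-1}(O_P(v)) \subset q_i^{-1}(O_Q(h_i(v)))$, use essentiality of $\pi$ to check simpliciality, and then verify (1)--(3). Your device of fixing a total order on $\ver(Q)$ and taking $h_i(v) = \min W_i(v)$ is a clean concrete implementation of the paper's bare assertion that one ``can choose'' the $h_i(v)$ consistently; you also supply the details for (1) and (2) that the paper defers to a reference.
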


\begin{proof}
Let $v\in P$ be a vertex. We can choose $h_i(v)\in \ver(Q)$ such that 
\begin{itemize}
  \item $\pi^{-1}\left(O_P(v)\right) \subset q_i^{-1}\left(O_Q(h_i(v))\right)$.
  \item If $q_i=q_j$ on $\pi^{-1}\left(O_P(v)\right)$ then $h_i(v)=h_j(v)$.
\end{itemize}
Suppose $v_0, \dots, v_n$ span a simplex in $P$. Since $\pi$ is essential 
\[  \emptyset \neq \pi^{-1}\left(O_P(v_0)\cap \dots \cap O_P(v_n)\right)
     \subset q_i^{-1}\left(O_Q(h_i(v_0))\cap \dots \cap O_Q(h_i(v_n))\right). \]
Hence $O_Q(h_i(v_0))\cap \dots \cap O_Q(h_i(v_n)) \neq \emptyset$.     
This implies that $h_i(v_0), \dots, h_i(v_n)$ span a simplex in $Q$.
Hence $h_i$ can be extended to a simplicial map from $P$ to $Q$.
The maps $h_i$ satisfy the condition (3) from the above choice.
We can also check the conditions (1) and (2).
See \cite[Lemma 5.5]{Lindenstrauss--Tsukamoto double VP} for the details.
\end{proof}

Let $(\mathcal{X},\mathbf{d})$ be a compact metric space and $\mathcal{U}$ its open cover.
We define the \textbf{Lebesgue number} $LN(\mathcal{X},\mathbf{d},\mathcal{U})$ as the supremum of $\varepsilon>0$
such that if a subset $A\subset \mathcal{X}$ satisfies $\diam A < \varepsilon$ then 
there exists $U\in \mathcal{U}$ containing $A$.

\subsection{Dynamical tiling construction}  \label{subsection: dynamical tiling construction}

The purpose of this subsection is to define a ``dynamical decomposition'' of the real line, which was 
 first introduced in \cite[Section 4]{Gutman--Lindenstrauss--Tsukamoto}.
This will be the basis of the construction in the proof of Theorem \ref{theorem: dynamical PS theorem}.

Let $(\mathcal{X},T)$ be a dynamical system and $\psi:\mathcal{X}\to [0,1]$ a continuous function.
Take $x\in \mathcal{X}$. We consider 
\begin{equation} \label{eq: marker points}
    \left\{\left(a, \frac{1}{\psi(T^a x)}\right) \middle| \, a\in \mathbb{Z} \text{ with } \psi(T^a x) >0 \right\}. 
\end{equation}    
This is a discrete subset of the plane. We assume that (\ref{eq: marker points}) is nonempty for every $x\in \mathcal{X}$.
Namely for every $x\in \mathcal{X}$ there exists $a\in \mathbb{Z}$ with $\psi(T^a x)>0$.
Let $\mathbb{R}^2 = \bigcup_{a\in \mathbb{Z}} V_{\psi}(x,a)$ be the associated \textbf{Voronoi diagram}, where 
$V_\psi(x,a)$ is the (convex) set of $u\in \mathbb{R}^2$ satisfying 
\[  \left|u-\left(a,\frac{1}{\psi(T^a x)}\right)\right| \leq \left|u-\left(b, \frac{1}{\psi(T^b x)}\right)\right| \]
for any $b\in \mathbb{Z}$ with $\psi(T^b x) >0$.
(If $\psi(T^a x)=0$ then $V_\psi(x,a)$ is empty.)
We set 
\[   I_\psi(x,a) = V_\psi(x,a)\cap (\mathbb{R}\times \{0\}). \]
See Figure \ref{figure: dynamical tiling construction}.
(This is the same figure with the one in \cite[Subsection 5.4]{Lindenstrauss--Tsukamoto double VP}.)

\begin{figure}[h]
    \centering
    \includegraphics[width=5.0in]{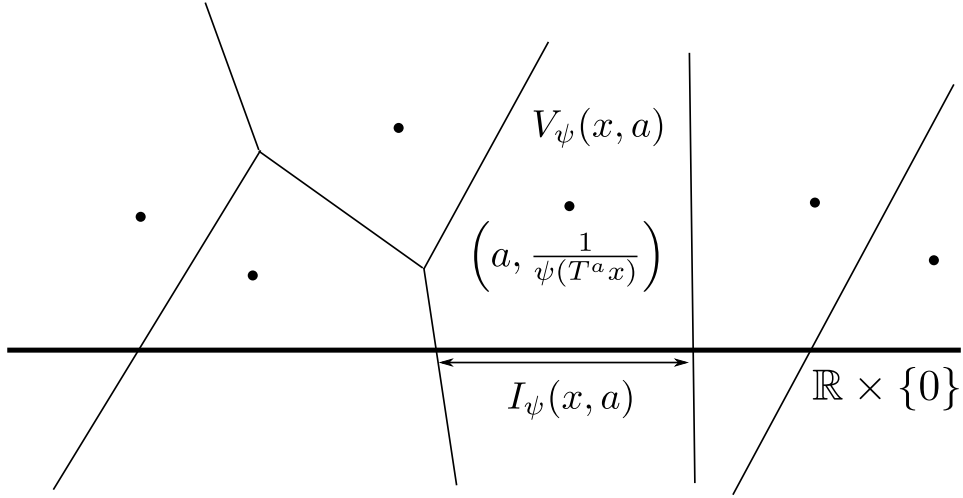}
    \caption{Dynamical tiling construction}   \label{figure: dynamical tiling construction}
\end{figure}

We naturally identify $\mathbb{R}\times \{0\}$ with $\mathbb{R}$.
This provides a decomposition of $\mathbb{R}$:
\[  \mathbb{R} = \bigcup_{a\in \mathbb{Z}} I_\psi(x,a). \]
We set 
\[  \partial_\psi(x) = \bigcup_{a\in \mathbb{Z}} \partial I_\psi(x,a) \subset \mathbb{R}, \]
where $\partial I_\psi(x,a)$ is the boundary of $I_\psi(x,a)$ (e.g. $\partial [0,1] = \{0,1\}$).
This construction is equivariant:
\[  I_\psi(T^n x, a) = -n + I_\psi(x,a+n), \quad
    \partial_\psi(T^n x) = -n + \partial_\psi(x). \]
Recall that a dynamical system $(\mathcal{X},T)$ is said to satisfy the marker property if for every $N>0$ there exists an open set 
$U\subset \mathcal{X}$ satisfying 
\begin{equation} \label{eq: marker property revisited}
    U\cap T^{-n} U = \emptyset \quad (1\leq n\leq N), \quad 
     \mathcal{X} = \bigcup_{n\in \mathbb{Z}} T^{-n} U. 
\end{equation}

\begin{lemma} \label{lemma: dynamical tiling}
Suppose $(\mathcal{X},T)$ satisfies the marker property.
Then for any $\varepsilon>0$ we can find a continuous function $\psi:\mathcal{X}\to [0,1]$ such that 
(\ref{eq: marker points}) is nonempty for every $x\in \mathcal{X}$ and that it satisfies that following two conditions.

   \begin{enumerate}
     \item There exists $M>0$ such that $I_\psi(x,a)\subset (a-M,a+M)$ for all $x\in \mathcal{X}$ and $a\in \mathbb{Z}$.
             The intervals $I_\psi(x,a)$ depend continuously on $x\in \mathcal{X}$, namely if $I_\psi(x,a)$ has positive length and 
             if $x_k\to x$ in $\mathcal{X}$ then $I_\psi(x_k,a)$ converges to $I_\psi(x,a)$ in the Hausdorff topology.
     \item The sets $\partial_\psi(x)$ are sufficiently ``sparse'' in the sense that 
            \[   \lim_{R\to \infty} \frac{\sup_{x\in \mathcal{X}} |\partial_\psi(x)\cap [0,R]|}{R} < \varepsilon. \]
            Here $|\partial_\psi(x)\cap [0,R]|$ is the cardinality of $\partial_\psi(x)\cap [0,R]$.
   \end{enumerate}
\end{lemma}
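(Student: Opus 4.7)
The plan is to produce $\psi$ as a continuous Urysohn bump function concentrated on a marker set of large spacing. Given $\varepsilon>0$, I first fix an integer $N$ with $2/N<\varepsilon$ and, via the marker property, choose an open set $U\subset\mathcal{X}$ with $U\cap T^{-n}U=\emptyset$ for $1\leq n\leq N$ and $\mathcal{X}=\bigcup_{n\in\mathbb{Z}}T^{-n}U$. Compactness produces a finite subcover $\mathcal{X}=\bigcup_{|n|\leq K}T^{-n}U$. I then pick an open $V$ with $\overline{V}\subset U$ that still finitely covers, say $\mathcal{X}=\bigcup_{|n|\leq K'}T^{-n}V$, and use Urysohn's lemma to take a continuous $\psi:\mathcal{X}\to[0,1]$ equal to $1$ on $V$ and to $0$ outside $U$. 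The marker times $\{a\in\mathbb{Z}:\psi(T^a x)>0\}$ lie inside $\{a:T^a x\in U\}$, so their gaps are at least $N+1$; meanwhile the height-one times $\{a:T^a x\in V\}$ meet every window of $2K'+1$ consecutive integers on either side of any $a$.

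The first property immediately gives condition (1), and it also gives condition (3) by an elementary counting argument: the number of marker times in $[0,R]$ is at most $R/N+1$, and each contributes at most two endpoints to $\partial_\psi(x)$, so $|\partial_\psi(x)\cap[0,R]|\leq 2(R/N+1)$ uniformly in $x$, giving upper density $\leq 2/N<\varepsilon$. For the bounded-support part of (2), the key input is that every marker has height $1/\psi(T^a x)\geq 1$ and height-one markers flank every integer at uniformly bounded distance. A direct perpendicular-bisector computation then confines $V_\psi(x,a)$ between the bisectors with those flanking height-one markers, yielding $I_\psi(x,a)\subset(a-M,a+M)$ with $M$ depending only on $K'$. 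Continuity in $x$ reduces to the fact that the Voronoi cell of a given site, restricted to the fixed horizontal window $[a-M,a+M]\times\mathbb{R}$, depends continuously on the finitely many marker positions in that window, and those positions vary continuously through $\psi$.

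The main technical obstacle is part (2), where the heights $1/\psi(T^a x)$ can blow up as $\psi(T^a x)\to 0^+$ near $\partial U$. One must check that such tall markers do not create large or pathological pieces of the tessellation of $\mathbb{R}$. The resolution is the observation that as a Voronoi site rises vertically, its Voronoi cell continuously recedes from $\mathbb{R}\times\{0\}$ and eventually stops intersecting it, so the tessellation of the real line is effectively controlled by the height-one markers alone. Once this is in place, Hausdorff convergence of $I_\psi(x_k,a)$ to $I_\psi(x,a)$ whenever the limit is nondegenerate is a routine consequence of continuous dependence of planar Voronoi diagrams on finitely many sites.
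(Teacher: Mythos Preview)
Your construction is correct and matches the paper's own proof: choose $N$ large, take $U$ from the marker property, shrink to a compact (or precompact) set that still covers under finitely many iterates, and let $\psi$ be a Urysohn bump that is $1$ on the inner set and supported in $U$. Your numbering of the conditions is off by one (what you call ``condition (3)'' is the lemma's (2), and the ``bounded-support part'' is the lemma's (1)), but the mathematical content---gaps of size $\geq N+1$ forcing sparsity, height-one markers at bounded distance forcing $I_\psi(x,a)\subset(a-M,a+M)$ via a bisector computation, and tall markers receding from the real axis to give continuity---is all correct and is exactly what the paper (via its reference to \cite{Lindenstrauss--Tsukamoto double VP}) has in mind.
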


\begin{proof}
Take $N>1/\varepsilon$. There exists an open set $U\subset \mathcal{X}$ satisfying (\ref{eq: marker property revisited}).
We can find $M>N$ and a compact subset $K\subset U$ with $\mathcal{X} = \bigcup_{n=0}^{M-1}T^{-n} K$.
Let $\psi:\mathcal{X} \to [0,1]$ be a continuous function such that $\psi=1$ on $K$ and $\supp \psi \subset U$.
We can check that this function satisfies the requirements. See \cite[Lemma 5.10]{Lindenstrauss--Tsukamoto double VP}
for the details.
\end{proof}

\subsection{Proof of Theorem \ref{theorem: dynamical PS theorem}} \label{subsection: proof of dynamical PS theorem}

Theorem \ref{theorem: dynamical PS theorem} is contained in the following theorem.
For a topological space $\mathcal{X}$ and a Banach space $(V,\norm{\cdot})$
we denote by $C(\mathcal{X},V)$ the space of 
continuous maps $f:\mathcal{X}\to V$ endowed with the norm topology
(i.e. the topology given by the metric $\sup_{x\in \mathcal{X}}\norm{f(x)-g(x)}$).

\begin{theorem}[$\supset$ Theorem \ref{theorem: dynamical PS theorem}]
\label{theorem: dynamical PS theorem refined}
Let $(\mathcal{X},T)$ be a dynamical system with a continuous function 
$\varphi:\mathcal{X}\to \mathbb{R}$, and let 
$(V,\norm{\cdot})$ be an infinite dimensional Banach space.
Suppose $(\mathcal{X},T)$ has the marker property.
Then for a dense subset $f\in C(\mathcal{X},V)$, $f$ is a topological embedding and satisfies 
\[  \overline{\mdim}_{\mathrm{M}}(\mathcal{X},T,f^*\norm{\cdot}, \varphi) = \mdim(\mathcal{X},T,\varphi). \]
Here $f^*\norm{\cdot}$ is the metric $\norm{f(x)-f(y)}$ $(x,y\in \mathcal{X})$.
\end{theorem}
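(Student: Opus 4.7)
My plan is to deduce the theorem by showing that, for each $\alpha > \mdim(\mathcal{X}, T, \varphi)$, each $f_0 \in C(\mathcal{X}, V)$, and each $\delta > 0$, one can produce $f \in C(\mathcal{X}, V)$ within $\delta$ of $f_0$ in the sup norm and whose pullback metric satisfies $\overline{\mdim}_{\mathrm{M}}(\mathcal{X}, T, f^*\norm{\cdot}, \varphi) \leq \alpha$. Intersecting the resulting dense subsets over a countable sequence $\alpha \downarrow \mdim(\mathcal{X}, T, \varphi)$, and combining with the density of injective linear perturbations (Lemma \ref{lemma: preparations on linear maps}(2)) to guarantee that $f$ is a topological embedding, then yields a dense set of embeddings achieving the upper bound. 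The matching lower bound $\mdim(\mathcal{X}, T, \varphi) \leq \overline{\mdim}_{\mathrm{M}}(\mathcal{X}, T, f^*\norm{\cdot}, \varphi)$ holds for every metric by Corollary \ref{corollary: mean dimension, rate distortion dimension and metric mean dimension restated}.

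To construct the approximating $f$, I first use Corollary \ref{cor: definition of mean dimension with potential by small local dimension} to fix $\varepsilon_0 > 0$ small and $N$ large enough that there exists a continuous map $\pi: \mathcal{X} \to P$ to some simplicial complex $P$ with $\diam(\pi^{-1}(\Delta), \mathbf{d}_N) < \varepsilon_0$ for every simplex $\Delta \subset P$ and $\dim'_{\pi(x)} P + S_N \varphi(x) < \alpha N$ for all $x \in \mathcal{X}$; by Lemma \ref{lemma: essential map} I may assume $\pi$ is essential. Next, invoking the marker property and Lemma \ref{lemma: dynamical tiling}, I pick a tiling function $\psi$ with arbitrarily small boundary density $\eta$ and all Voronoi intervals $I_\psi(x, a)$ of length comparable to $N$. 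Using that $V$ is infinite dimensional, I reserve for each $a \in \mathbb{Z}$ a mutually independent finite-dimensional subspace $V_a \subset V$ of small total norm, and via Lemma \ref{lemma: preparations on linear maps}(3) I pick linear maps $g_a : P \to V_a$. The map $f$ is then assembled as $f_0$ plus a continuous interpolation of the terms $x \mapsto g_a(\pi(T^a x))$ across tile boundaries; the continuous dependence of $I_\psi(x, a)$ on $x$ from Lemma \ref{lemma: dynamical tiling}(1), together with the simplicial-map coherence of Lemma \ref{lemma: preparation on simplicial map}, yields continuity on overlaps, while shrinking each $V_a$ enforces $\norm{f - f_0}_\infty < \delta$.

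The verification reduces covering-number estimates for $(\mathcal{X}, f^*\norm{\cdot}_M, S_M \varphi)$ to counting cells of the products of $P$ along tile positions $a$ with $I_\psi(\cdot, a) \cap [0, M] \neq \emptyset$. Each cell $U$ is specified by a choice of simplex $\Delta_a \subset P$ for each such $a$ via $\pi(T^a x) \in O_P(\Delta_a)$; on $U$ the map $f$ factors, up to error $\varepsilon_0$, through a linear map on the product of simplices, which by Lemma \ref{lemma: preparations on linear maps}(1) admits an $\varepsilon$-cover of cardinality $\prod_a C(P)(1/\varepsilon)^{\dim \Delta_a}$ in the pullback metric. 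The tile-wise inequality $\dim \Delta_a + S_{N_a}\varphi(T^a x) \leq \alpha N_a + \var_{\varepsilon_0}(\varphi, \mathbf{d}) \cdot N_a + |N_a - N| \cdot \norm{\varphi}_\infty$ (for $x \in U$) combines across the tiles meeting $[0, M]$, so summing the weights $(1/\varepsilon)^{\sup_U S_M \varphi}$ against the $\varepsilon$-cover sizes over all cells yields $\#(\mathcal{X}, f^*\norm{\cdot}_M, S_M \varphi, \varepsilon) \leq (1/\varepsilon)^{\alpha M + o_{N, \eta, \varepsilon_0}(M)}$. Dividing by $\log(1/\varepsilon)$ and sending first $\varepsilon \to 0$, then $\eta, \varepsilon_0 \to 0$ and $N \to \infty$ with $\alpha \downarrow \mdim(\mathcal{X}, T, \varphi)$ in the correct order concludes the bound.

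The main obstacle I anticipate is the multi-scale bookkeeping in the counting step: the widim hypothesis controls $S_N \varphi$ only on orbit pieces of length exactly $N$, whereas the tile lengths $N_a$ fluctuate and the target $S_M \varphi$ lives at the much larger scale $M$. The argument therefore requires that essentially all tiles have length close to $N$ and that the limits be taken in an order making $\eta M$, $\var_{\varepsilon_0}(\varphi, \mathbf{d}) M$, and the boundary-tile contribution near the endpoints of $[0, M]$ each negligible relative to $\alpha M$. A secondary difficulty is arranging that the tile-by-tile linear construction of $f$ glues into a genuinely continuous map while remaining within $\delta$ of $f_0$ and still using mutually independent coordinate subspaces $V_a$: here one trades decreasing the operator norms of the $g_a$ against the fact that only boundedly many tiles are active near any $x$, so the total perturbation summed over active tiles is kept below $\delta$.
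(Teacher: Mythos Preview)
Your proposal has a genuine gap at the very first step: the plan to ``intersect the resulting dense subsets over a countable sequence $\alpha \downarrow \mdim(\mathcal{X},T,\varphi)$, and combine with the density of injective linear perturbations'' does not work. The set $\{f\in C(\mathcal{X},V): \overline{\mdim}_{\mathrm{M}}(\mathcal{X},T,f^*\norm{\cdot},\varphi)\leq \alpha\}$ is not known to be open or $G_\delta$, so Baire category is unavailable; and $\overline{\mdim}_{\mathrm{M}}$ is not continuous (or even semicontinuous) in $f$, so you cannot pass to a limit of maps $f_\alpha$ either. Relatedly, your final sentence ``sending first $\varepsilon \to 0$, then $\eta,\varepsilon_0\to 0$ and $N\to\infty$'' is ill-posed: the map $f$ is built from \emph{fixed} values of $N,\varepsilon_0,\eta$, and once $f$ is fixed those parameters cannot be varied when computing $\overline{\mdim}_{\mathrm{M}}(f^*\norm{\cdot})$. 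At best your construction yields, for each $\alpha>D:=\mdim(\mathcal{X},T,\varphi)$, a map $f_\alpha$ close to $f_0$ with $\overline{\mdim}_{\mathrm{M}}(f_\alpha^*\norm{\cdot})\leq \alpha$; it does not produce a single $f$ achieving the value $D$. A further issue is your recipe $f=f_0+(\text{small perturbation in independent subspaces})$: since the perturbation has sup-norm $<\delta$, at scales $\varepsilon\ll\delta$ the metric $f^*\norm{\cdot}$ is governed by $f_0$, over which you have no control, so even the claimed bound $\overline{\mdim}_{\mathrm{M}}(f^*\norm{\cdot})\leq\alpha$ is not justified by your covering argument.

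What the paper does instead is a \emph{multi-scale} inductive construction that handles all scales and the embedding property simultaneously. One builds a sequence $f_1,f_2,\dots$ with $\norm{f_{n+1}-f_n}<\min(\varepsilon_n/8,\delta_n/2)$, where each $f_n$ is itself a piecewise-linear map (it factors through a simplicial complex $P_n*Q_n$ via maps $g_{n,\lambda},g'_n$), so that the covering number of $(\mathcal{X},\Phi_{f_n,R}^*\norm{\cdot}_R,S_R\varphi,\varepsilon)$ is controlled by the simplicial structure for all $\varepsilon\in (0,\varepsilon_{n-1}]$ (Claim~\ref{claim: estimate of covering number in PS theorem 1}). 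The step from $f_n$ to $f_{n+1}$ is \emph{not} an additive perturbation but a replacement: one refines to a finer complex $P_{n+1}$ and uses simplicial maps $h_\lambda:P_{n+1}\to\overline{P_n*Q_n}$ (Lemma~\ref{lemma: preparation on simplicial map}) to make the new linear maps $g_{n+1,\lambda}$ close to the old ones, so that Condition~(3) persists at the previous scale while Condition~(2) takes over at the new, smaller scale $\varepsilon_{n+1}$. In the limit $f'=\lim f_n$, for each $n$ one has $\norm{f'-f_n}<\varepsilon_n/4$, so the covering-number bound for $f_n$ on the scale window $[\varepsilon_n,\varepsilon_{n-1}]$ transfers to $f'$; since the exponent in that bound is $D+O(1/n)$ and $n\to\infty$ as $\varepsilon\to 0$, one gets $\overline{\mdim}_{\mathrm{M}}(f')\leq D$. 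The embedding property is woven into the same induction: each $f_n$ is a $(1/n)$-embedding (forced by the injectivity of $g_{n,\lambda}*g'_n$ in Condition~(1)), and $\delta_n$ is chosen so that any map within $\delta_n$ of $f_n$ remains a $(1/n)$-embedding, hence $f'$ is an embedding. This simultaneous, scale-by-scale control is the missing idea in your proposal.
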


\begin{proof}
First we introduce some notations.
For a natural number $N$ we set $[N]=\{0,1,2,\dots, N-1\}$.
We define a norm on $V^N$ (the $n$-th power of $V$) by 
\[ \norm{(x_0,x_1,\dots, x_{N-1})}_N = \max\left(\norm{x_0},\norm{x_1},\dots, \norm{x_{N-1}}\right). \]
For simplicial complexes $P$ and $Q$ we define their \textbf{join} $P*Q$ as the quotient space of 
$[0,1]\times P\times Q$ by the equivalence relation 
\[ (0,p,q)\sim (0, p, q'), \quad 
    (1,p,q)\sim (1,p',q), \quad 
    (p,p'\in P, q, q'\in Q). \]
We denote the equivalence class of $(t,p,q)$ by $(1-t)p \oplus tq$.
We identify $P$ and $Q$ with $\{(0,p, *)|\, p\in P\}$ and $\{(1,*,q)|\, q\in Q\}$ in $P*Q$ respectively.
For a continuous map $f:\mathcal{X}\to V$ and $I\subset \mathbb{R}$ we define $\Phi_{f,I}:\mathcal{X}\to V^{I\cap \mathbb{Z}}$
by 
\[ \Phi_{f,I}(x) = \left(f(T^a x)\right)_{a\in I\cap \mathbb{Z}}. \]
For a natural number $R$ we set $\Phi_{f,R} := \Phi_{f,[R]}: \mathcal{X}\to V^R$.
We denote by $\Phi_{f,R}^*\norm{\cdot}_R$ the semi-metric
$\norm{\Phi_{f,R}(x)-\Phi_{f,R}(y)}_R$ on $\mathcal{X}$.
(It becomes a metric if $f$ is a topological embedding.)
For a semi-metric $\mathbf{d}'$ on $\mathcal{X}$ and $\varepsilon>0$ we define 
  \begin{equation*}
       \#\left(\mathcal{X},\mathbf{d}',\varphi,\varepsilon\right) = 
  \inf\left\{ \sum_{i=1}^n (1/\varepsilon)^{\sup_{U_i} \varphi} \middle|\,
   \parbox{3in}{\centering  $\mathcal{X} = U_1\cup \dots \cup U_n$ is an open cover with $\diam(U_i, \mathbf{d}') < \varepsilon$
                   for all $1\leq i\leq n$}\right\}, 
    \end{equation*}
where $\diam(U_i, \mathbf{d}')$ is the supremum of $\mathbf{d}'(x,y)$ over $x,y\in U_i$.
We fix a continuous function $\alpha:\mathbb{R} \to [0,1]$ such that 
$\alpha(t)= 1$ for $t\leq 1/2$ and $\alpha(t) = 0$ for $t\geq 3/4$.

We can assume $D=\mdim(\mathcal{X},T,\varphi)<\infty$.
Fix a metric $\mathbf{d}$ on $\mathcal{X}$.
Take an arbitrary continuous map $f:\mathcal{X}\to V$ and $\eta>0$.
Our purpose is to construct a topological embedding $f':\mathcal{X}\to V$ satisfying 
$\norm{f(x)-f'(x)}<\eta$ and $\overline{\mdim}_{\mathrm{M}}(\mathcal{X},T,(f')^*\norm{\cdot},\varphi) \leq D$.
(The reverse inequality $\overline{\mdim}_{\mathrm{M}}(\mathcal{X},T,(f')^*\norm{\cdot},\varphi) \geq D$
follows from Theorem \ref{theorem: mean Hausdorff dimension bounds mean dimension restated}.)
We may assume that $f(\mathcal{X})$ is contained in the open unit ball $B^\circ_1(V)$.
We will inductively construct the following data for $n\geq 1$.

\begin{data}  \label{data: dynamical PS theorem}
   \begin{enumerate}
      \item $1/2>\varepsilon_1>\varepsilon_2>\dots>0$ with $\varepsilon_{n+1}<\varepsilon_n/2$ and 
              $\eta/2>\delta_1>\delta_2>\dots>0$ with $\delta_{n+1}<\delta_n/2$.
      \item A natural number $N_n$.
      \item A continuous function $\psi_n:\mathcal{X}\to [0,1]$ such that 
              for every $x\in \mathcal{X}$ there exists $a\in \mathbb{Z}$ satisfying 
              $\psi_n(T^a x)>0$. We apply the dynamical tiling construction of 
              \S \ref{subsection: dynamical tiling construction} to $\psi_n$ and get the decomposition 
              $\mathbb{R} = \bigcup_{a\in \mathbb{Z}} I_{\psi_n}(x,a)$ for each $x\in \mathcal{X}$.   
      \item $(1/n)$-embeddings $\pi_n:(\mathcal{X},\mathbf{d}_{N_n})  \to P_n$ and 
              $\pi'_n:(\mathcal{X},\mathbf{d})\to Q_n$ with simplicial complexes $P_n$ and $Q_n$.     
      \item For each $\lambda\in [N_n]$, a linear map $g_{n,\lambda}:P_n\to B_1^\circ(V)$. 
      \item A linear map $g'_n:Q_n\to B^\circ_1(V)$.
   \end{enumerate}
\end{data}

We assume the following six conditions.

\begin{condition} \label{condition: dynamical PS theorem}
    \begin{enumerate}
      \item For each $\lambda\in [N_n]$, the map $g_{n,\lambda}*g'_n: P_n*Q_n\to B_1^\circ(V)$ is injective.
              For $\lambda_1\neq \lambda_2$
              \[   g_{n,\lambda_1}*g'_n(P_n*Q_n) \cap g_{n,\lambda_2}*g'_n(P_n*Q_n) = g_n(Q_n). \]
      \item Set $g_n=(g_{n,0},g_{n,1},\dots,g_{n,N_n-1}):P_n\to V^{N_n}$. 
              We assume that $\pi_n$ is essential and
              \[  \sum_{\Delta\subset P_n} 
                    \left(\frac{1}{\varepsilon}\right)^{\sup_{\pi_n^{-1}\left(O_{P_n}(\Delta)\right)} S_{N_n}\varphi}
                   \#(g_n(\Delta), \norm{\cdot}_{N_n}, \varepsilon)
                   < \left(\frac{1}{\varepsilon}\right)^{(D+\frac{3}{n})N_n}, 
                   \quad (0<\varepsilon \leq \varepsilon_n).   \]       
              Here $\Delta$ runs over simplexes of $P_n$.    
              Since $\pi_n$ is essential, $\pi_n^{-1}\left(O_{P_n}(\Delta)\right)$ is non-empty for every $\Delta\subset P_n$.  
      \item For $0<\varepsilon \leq \varepsilon_{n-1}$ $(n\geq 2)$, 
              \[  \#\left(\mathcal{X},(g_n\circ\pi_n)^*\norm{\cdot}_{N_n},S_{N_n}\varphi,\varepsilon\right)
                   < 2^{N_n}\left(\frac{1}{\varepsilon}\right)^{\left(D+\frac{4}{n-1}\right)N_n}. \]      
              Here $(g_n\circ \pi_n)^*\norm{\cdot}_{N_n}$ is the semi-metric $\norm{g_n(\pi_n(x))-g_n(\pi_n(y))}$ on $\mathcal{X}$.       
              Notice that the condition (2) above is stronger than this over the region $0<\varepsilon \leq \varepsilon_n$.
              The point is that the condition (3) covers the region $\varepsilon_n<\varepsilon\leq \varepsilon_{n-1}$.
      \item There exists $M_n>0$ such that $I_{\psi_n}(x,a)\subset (a-M_n,a+M_n)$ for all $x\in \mathcal{X}$ and $a\in \mathbb{Z}$.     
               We take $C_n \geq 1$ satisfying 
               \begin{equation} \label{eq: size of P_n*Q_n}
                    \#\left(\bigcup_{\lambda\in [N_n]} g_{n,\lambda}*g'_n(P_n*Q_n), \norm{\cdot},\varepsilon\right)
                    < \left(\frac{1}{\varepsilon}\right)^{C_n} \quad (0<\varepsilon \leq 1/2). 
               \end{equation}       
              Then we assume 
              \[  \lim_{R\to \infty} \frac{\sup_{x\in \mathcal{X}} |\partial_{\psi_n}(x)\cap [0,R]|}{R}
                   < \frac{1}{2nN_n(C_n+\norm{\varphi}_\infty)},  \]
               where $\norm{\varphi}_\infty = \max_{x\in \mathcal{X}} |\varphi(x)|$.    
      \item We define a continuous map $f_n:\mathcal{X}\to B^\circ_1(V)$ as follows.
              Let $x\in \mathcal{X}$.  Take $a\in \mathbb{Z}$ with $0\in I_{\psi_n}(x,a)$, and 
              take $b\in \mathbb{Z}$ satisfying $b\equiv a (\mathrm{mod} N_n)$ and $0\in b+[N_n]$.         
              We set 
              \begin{equation} \label{eq: continuous map f_n in dynamical PS theorem}
                  f_n(x) =  \left\{1-\alpha\left(\dist(0,\partial_{\psi_n}(x))\right)\right\} g_{n,-b}\left(\pi_n(T^b x)\right) 
                             + \alpha\left(\dist(0,\partial_{\psi_n}(x))\right) g'_n(\pi'_n(x)),                       
              \end{equation}                  
              where $\dist(0,\partial_{\psi_n}(x)) = \min_{t\in \partial_{\psi_n}(x)} |t|$.
              The condition (1) above implies that the map $f_n$ is a $(1/n)$-embedding with respect to $\mathbf{d}$.
              Then we assume that if a continuous map $f':\mathcal{X}\to V$ satisfies 
              $\norm{f'(x)-f_n(x)} < \delta_n$ for all $x\in \mathcal{X}$ then $f'$ is a $(1/n)$-embedding with respect to $\mathbf{d}$.
      \item For all $x\in \mathcal{X}$
              \[  \norm{f(x)-f_1(x)} < \frac{\eta}{2}, \quad 
                   \norm{f_n(x)-f_{n+1}(x)} < \min\left(\frac{\varepsilon_n}{8}, \frac{\delta_n}{2}\right). \]          
    \end{enumerate}
\end{condition}

Suppose we have constructed the above data.
We define a continuous map $f':\mathcal{X}\to V$ by $f'(x) = \lim_{n\to \infty} f_n(x)$.
(The convergence follows from the condition (6) above.) 
It satisfies $\norm{f'(x)-f(x)}<\eta$ and 
$\norm{f'(x)-f_n(x)} < \min(\varepsilon_n/4,\delta_n)$ for all $n\geq 1$.
Then the condition (5) implies that $f'$ is a $(1/n)$-embedding with respect to $\mathbf{d}$ for all $n\geq 1$, which means that 
$f'$ is a topological embedding.
We estimate 
\[  \overline{\mdim}_{\mathrm{M}}(\mathcal{X},T,(f')^*\norm{\cdot},\varphi) 
     = \limsup_{\varepsilon\to 0} 
     \left\{\left(\lim_{R\to \infty} \frac{\log\#(\mathcal{X},\Phi_{f',R}^*\norm{\cdot}_R, S_R\varphi, \varepsilon)}{R}\right)
     \middle/ \log(1/\varepsilon)\right\}. \]
Let $0<\varepsilon <\varepsilon_1$. Take $n>1$ with $\varepsilon_n\leq \varepsilon < \varepsilon_{n-1}$.     
From $\norm{f'(x)-f_n(x)}<\varepsilon_n/4$
\begin{equation*}
     \#(\mathcal{X},\Phi_{f',R}^*\norm{\cdot}_R, S_R\varphi,\varepsilon)  \leq
      \#\left(\mathcal{X},\Phi_{f_n,R}^*\norm{\cdot}_R, S_R\varphi, \varepsilon -\frac{\varepsilon_n}{2}\right) 
     \leq  \#\left(\mathcal{X},\Phi_{f_n,R}^*\norm{\cdot}_R, S_R\varphi, \frac{\varepsilon}{2}\right).
\end{equation*}    
From Claim \ref{claim: estimate of covering number in PS theorem 1} below, 
\[ \lim_{R\to \infty} \frac{\log \#(\mathcal{X},\Phi_{f',R}^*\norm{\cdot},S_R\varphi,\varepsilon)}{R} 
   \leq 2+ \left(D+\frac{4}{n-1}+\frac{1}{n} \right) \log\left(\frac{2}{\varepsilon}\right). \]
Since $n\to \infty$ as $\varepsilon \to 0$,
this proves $\overline{\mdim}_{\mathrm{M}}(\mathcal{X},T,(f')^*\norm{\cdot},\varphi) \leq D$.

\begin{claim} \label{claim: estimate of covering number in PS theorem 1}
      Let $0<\varepsilon <\varepsilon_{n-1}$ $(n\geq 2)$.
      If $R$ is a sufficiently large natural number then
      \[  \#\left(\mathcal{X},\Phi_{f_n,R}^*\norm{\cdot}_R,S_R\varphi,\varepsilon\right) 
              \leq 4^R \left(\frac{1}{\varepsilon}\right)^{\left(D+\frac{4}{n-1}\right)R+\frac{R}{n}} \]
\end{claim}

\begin{proof}
Let $x\in \mathcal{X}$.
A discrete interval $J=[b,b+N_n)\cap \mathbb{Z}$ of length $N_n$ $(b\in \mathbb{Z})$ is said to be \textbf{good for $x$}
if there exists $a\in \mathbb{Z}$ such that $b\equiv a (\mathrm{mod} N_n)$ and 
$[b-1, b+N_n]\subset I_{\psi_n}(x,a)$. 
If $J$ is good for $x$ then 
\[  \Phi_{f_n,J}(x) = g_n\left(\pi_n(T^b x)\right) \in g_n(P_n). \]

We denote by $\mathcal{J}_x$ the union of $J\subset [R]$ which are good for $x$.
For a subset $\mathcal{J}\subset [R]$ we define $\mathcal{X}_{\mathcal{J}}$ as the set of 
$x\in \mathcal{X}$ satisfying $\mathcal{J}_x=\mathcal{J}$.
The set $\mathcal{X}_{\mathcal{J}}$ may be empty. 
If it is non-empty, then from Condition \ref{condition: dynamical PS theorem} (3)
\begin{equation} \label{eq: estimate of covering number of X_J in PS theorem 1}
   \#\left(\mathcal{X}_{\mathcal{J}},\Phi_{f_n,R}^*\norm{\cdot}_R, S_R\varphi,\varepsilon\right) 
   \leq  \left\{2^{N_n}\left(\frac{1}{\varepsilon}\right)^{\left(D+\frac{4}{n-1}\right)N_n}
   \right\}^{|\mathcal{J}|/N_n} 
    \cdot  \left(\frac{1}{\varepsilon}\right)^{(C_n+\norm{\varphi}_\infty) \left|[R]\setminus \mathcal{J}\right|}.
\end{equation}   
Here $C_n$ is the positive constant introduced in (\ref{eq: size of P_n*Q_n}).
We have $|\mathcal{J}|\leq R$ and 
\[  \left|[R]\setminus \mathcal{J}\right| \leq 2N_n \sup_{x\in \mathcal{X}} \left|\partial_{\psi_n}(x)\cap [0,R] \right| + 2N_n. \]
The second term ``$+2N_n$'' in the right-hand side is the edge effect.
From Condition \ref{condition: dynamical PS theorem} (4), for sufficiently large $R$
\[  (C_n+\norm{\varphi}_\infty) \left|[R]\setminus \mathcal{J}\right| < \frac{R}{n}. \]
Then the quantity (\ref{eq: estimate of covering number of X_J in PS theorem 1}) is bounded by 
\[  2^{R} \left(\frac{1}{\varepsilon}\right)^{\left(D+\frac{4}{n-1}\right)R  + \frac{R}{n}}.  \]
The number of the choices of $\mathcal{J}\subset [R]$ is bounded by $2^{R}$.
Thus 
\[    \#\left(\mathcal{X},\Phi_{f_n,R}^*\norm{\cdot}_R, S_R\varphi,\varepsilon\right) 
      \leq 4^{R}  \left(\frac{1}{\varepsilon}\right)^{\left(D+\frac{4}{n-1}\right)R + \frac{R}{n}}.  \]
\end{proof}

\textbf{Induction: Step 1.}
Now we start to construct the data.
First we construct them for $n=1$.
Take $0<\tau_1<1$ such that 
\[  \mathbf{d}(x,y) < \tau_1 \Longrightarrow \norm{f(x)-f(y)} < \frac{\eta}{2}, \quad 
     |\varphi(x)-\varphi(y)| < 1. \]
From $\mdim(\mathcal{X},T,\varphi) =D$, we can find $N_1>0$, a simplicial complex $P_1$ and 
a $\tau_1$-embedding $\pi_1:(\mathcal{X},\mathbf{d}_{N_1})\to P_1$ such that 
$\dim_{\pi_1(x)}P_1+S_{N_1}\varphi(x) < N_1(D+1)$ for all $x\in \mathcal{X}$.
We also take a simplicial complex $Q_1$ and a $\tau_1$-embedding 
$\pi'_1:(\mathcal{X},\mathbf{d})\to Q_1$.
By subdividing $P_1$ and $Q_1$ if necessary, we can assume that 
for all simplexes $\Delta\subset P_1$ and all $w\in \ver(Q_1)$
\[  \diam \left(\pi^{-1}_1\left(O_{P_1}(\Delta)\right), \mathbf{d}_{N_1}\right) < \tau_1, \quad 
     \diam \left((\pi_1')^{-1}(O_{Q_1}(w)),\mathbf{d}\right) < \tau_1. \]
Moreover by Lemma \ref{lemma: essential map} we can assume that $\pi_1$ is essential.

By Lemma \ref{lemma: preparations on linear maps} (3) there exist linear maps 
$g_{1,\lambda}:P_1\to B_1^\circ(V)$ $(\lambda\in [N_1])$ and 
$g'_1:Q_1\to B^\circ_1(V)$ satisfying 
\begin{equation} \label{eq: discrepancy of perturbation in induction step 1}
 \norm{f(T^\lambda x)-g_{1,\lambda}(\pi_1(x))} < \frac{\eta}{2}, \quad 
    \norm{f(x)-g'_1(\pi_1(x))} < \frac{\eta}{2}. 
\end{equation}    
We slightly perturb $g_{1,\lambda}$ and $g'_1$ (if necessary) by Lemma \ref{lemma: preparations on linear maps}
(2) so that they satisfy Condition \ref{condition: dynamical PS theorem} (1).

By Lemma \ref{lemma: preparations on linear maps} (1), we can choose $0<\varepsilon_1<1/2$ such that 
for any $0<\varepsilon \leq \varepsilon_1$ and simplex $\Delta\subset P_1$
\[  \#\left(g_1(\Delta),\norm{\cdot}_{N_1}, \varepsilon\right) < 
    \frac{1}{(\text{Number of simplexes of $P_1$})} \left(\frac{1}{\varepsilon}\right)^{\dim \Delta + 1}. \]
Let $\Delta\subset P_1$ be a simplex.
Since $\pi_1$ is essential, we can find a point $x\in \pi_1^{-1}\left(O_{P_1}(\Delta)\right)$ with 
$\dim \Delta \leq \dim_{\pi_1(x)} P_1$.
From the choice of $\tau_1$ 
\[ \sup_{\pi_1^{-1}\left(O_{P_1}(\Delta)\right)} S_{N_1}\varphi \leq S_{N_1}\varphi(x) + N_1. \]
 Hence for $0<\varepsilon \leq \varepsilon_1$
\begin{equation*} 
   \begin{split}
   & \left(\frac{1}{\varepsilon}\right)^{\sup_{\pi_1^{-1}\left(O_{P_1}(\Delta)\right)} S_{N_1}\varphi}
   \#\left(g_1(\Delta),\norm{\cdot}_{N_1},\varepsilon\right)  \\ 
   &  <    \frac{1}{(\text{Number of simplexes of $P_1$})} 
         \left(\frac{1}{\varepsilon}\right)^{\dim \Delta +S_{N_1}\varphi(x) + N_1+ 1} \\
   & \leq         \frac{1}{(\text{Number of simplexes of $P_1$})} 
         \left(\frac{1}{\varepsilon}\right)^{\dim_{\pi_1(x)} P_1 +S_{N_1}\varphi(x) + N_1+ 1}.
   \end{split}      
\end{equation*}
From $\dim_{\pi_1(x)} P_1 + S_{N_1}\varphi(x) < N_1(D+1)$,
this is bounded by 
\begin{equation*}
   \begin{split}
    & \frac{1}{(\text{Number of simplexes of $P_1$})}  \left(\frac{1}{\varepsilon}\right)^{N_1(D+1)+N_1+1} \\
    &     \leq   \frac{1}{(\text{Number of simplexes of $P_1$})}   \left(\frac{1}{\varepsilon}\right)^{N_1(D+3)}. 
  \end{split}
\end{equation*}  
This shows Condition \ref{condition: dynamical PS theorem} (2):
\[  \sum_{\Delta\subset P_1}
      \left(\frac{1}{\varepsilon}\right)^{\sup_{\pi_1^{-1}\left(O_{P_1}(\Delta)\right)} S_{N_1}\varphi}
   \#\left(g_1(\Delta),\norm{\cdot}_{N_1},\varepsilon\right)
   <   \left(\frac{1}{\varepsilon}\right)^{N_1(D+3)}. \]
Condition \ref{condition: dynamical PS theorem} (3) is empty for $n=1$.

By Lemma \ref{lemma: dynamical tiling} we can choose a continuous function $\psi_1:\mathcal{X}\to [0,1]$
satisfying Condition \ref{condition: dynamical PS theorem} (4).

The continuous map $f_1:\mathcal{X}\to V$ defined in (\ref{eq: continuous map f_n in dynamical PS theorem})
is a $1$-embedding. Since ``$1$-embedding'' is an open condition, we can choose $0<\delta_1<\eta/2$
such that any continuous map $f':\mathcal{X}\to V$ with $\norm{f_1(x)-f'(x)} < \delta_1$ is also 
a $1$-embedding. This establishes Condition \ref{condition: dynamical PS theorem} (5). 

From (\ref{eq: discrepancy of perturbation in induction step 1}) we get Condition \ref{condition: dynamical PS theorem} (6):
\[  \norm{f(x)- f_1(x)} < \frac{\eta}{2}. \]
We have completed the construction of the data for $n=1$.

\vspace{0.2cm}

\textbf{Induction: Step $n$ $\Rightarrow$ Step $n+1$.}
Suppose we have constructed the data for $n$.
We will construct the data for $n+1$.

We subdivide the join $P_n*Q_n$ sufficiently fine (denoted by $\overline{P_n*Q_n}$)
such that for all simplexes $\Delta\subset \overline{P_n*Q_n}$ and all $\lambda\in [N_n]$
\begin{equation} \label{eq: diameter of g_{n,lambda}*g'_n(Delta)}
    \diam \left(g_{n,\lambda}*g'_n(\Delta), \norm{\cdot}\right) < \min\left(\frac{\varepsilon_n}{8}, \frac{\delta_n}{2}\right). 
\end{equation}
Since $P_n$ and $Q_n$ are (naturally) subcomplexes of $P_n*Q_n$,
this also introduces subdivisions of $P_n$ and $Q_n$ (denoted by $\overline{P_n}$ and $\overline{Q_n}$).

We define a continuous map 
$q_n: \mathcal{X}\to \overline{P_n*Q_n}$ as follows.
Let $x\in \mathcal{X}$.
Take $a,b\in \mathbb{Z}$ such that $0\in I_{\psi_n}(x,a)$, $a\equiv b  (\mathrm{mod} N_n)$ and 
$0\in b+[N_n]$.  
Then we set 
\[  q_n(x) =  \left\{1-\alpha\left(\dist(0,\partial_{\psi_n}(x))\right)\right\} \pi_n(T^b x)
                             \oplus \alpha\left(\dist(0,\partial_{\psi_n}(x))\right) \pi'_n(x). \] 
(This is a point in the join $P_n*Q_n$. We identify it with the point of the subdivision $\overline{P_n*Q_n}$.)
We have 
\begin{equation}  \label{eq: express f_n by q_n}
 f_n(x) = g_{n,-b}*g'_n(q_n(x)).
\end{equation}

Take $0<\tau_{n+1}<1/(n+1)$ satisfying the following four conditions.

\begin{enumerate}
  \renewcommand{\theenumi}{\roman{enumi}}
    \item If $\mathbf{d}(x,y) < \tau_{n+1}$ then $\norm{f_n(x)-f_n(y)} < \min(\varepsilon_n/8,\delta_n/2)$.
    \item If $\mathbf{d}(x,y) < \tau_{n+1}$ then $|\varphi(x)-\varphi(y)|<\frac{1}{n+1}$.
    \item If $\mathbf{d}(x,y)<\tau_{n+1}$ then the decompositions $\mathbb{R}= \bigcup_{a\in \mathbb{Z}}I_{\psi_n}(x,a)$ and 
            $\mathbb{R}= \bigcup_{a\in \mathbb{Z}}I_{\psi_n}(y,a)$ are ``close'' in the following two senses.
            \begin{itemize}
              \item 
                     \[  \left|\dist\left(0,\partial_{\psi_n}(x)\right) - \dist\left(0,\partial_{\psi_n}(y)\right)\right| < \frac{1}{4}. \]
              \item If $(-1/4,1/4)\subset I_{\psi_n}(x,a)$ then 
                      $0$ is an interior point of $I_{\psi_n}(y,a)$.
            \end{itemize}
    \item Consider the open cover $\left\{q_n^{-1}\left(O_{\overline{P_n*Q_n}}(v)\right)\right\}_{v\in \ver\left(\overline{P_n*Q_n}\right)}$
            of $\mathcal{X}$.
            The number $\tau_{n+1}$ is smaller than its Lebesgue number:           
            \[ \tau_{n+1} < 
            LN\left(\mathcal{X},\mathbf{d},\left\{q_n^{-1}
            \left(O_{\overline{P_n*Q_n}}(v)\right)\right\}_{v\in \ver\left(\overline{P_n*Q_n}\right)}\right).  \]
\end{enumerate}

Take a $\tau_{n+1}$-embedding 
$\pi'_{n+1}: (\mathcal{X},\mathbf{d})\to Q_{n+1}$ with a simplicial complex $Q_{n+1}$.
By subdividing it (if necessary), we can assume that $\diam \left((\pi'_{n+1})^{-1}\left(O_{Q_{n+1}}(w)\right),\mathbf{d}\right) < \tau_{n+1}$
for all $w\in \ver(Q_{n+1})$.
By Lemma \ref{lemma: preparations on linear maps} (3) there exists a linear map 
$\tilde{g}'_{n+1}:Q_{n+1}\to B^\circ_1(V)$ satisfying 
\begin{equation} \label{eq: discrepancy of tilde{g}_{n+1}}
  \norm{\tilde{g}'_{n+1}(\pi'_{n+1}(x))-f_n(x)} < \min\left(\frac{\varepsilon_n}{8},\frac{\delta_n}{2}\right).
\end{equation}

Take $N_{n+1}> N_n$ satisfying the following two conditions.

\begin{enumerate}
  \renewcommand{\theenumi}{\alph{enumi}}
   \item There exists a $\tau_{n+1}$-embedding $\pi_{n+1}:(\mathcal{X},\mathbf{d}_{N_{n+1}})\to P_{n+1}$
            with a simplicial complex $P_{n+1}$ such that for all $x\in \mathcal{X}$
            \begin{equation}  \label{eq: local dimension bound on P_{n+1}}
                \frac{\dim_{\pi_{n+1}(x)} P_{n+1} + S_{N_{n+1}}\varphi(x)}{N_{n+1}} < D + \frac{1}{n+1}. 
            \end{equation}    
   \item 
          \[ \frac{1+\sup_{x\in \mathcal{X}}\left|\partial_{\psi_n}(x)\cap [0,N_{n+1}]\right|}{N_{n+1}} 
              < \frac{1}{2n N_n (C_n+\norm{\varphi}_\infty)}, \]
          where $C_n$ is the positive constant introduced in (\ref{eq: size of P_n*Q_n}).
\end{enumerate}  

 By subdividing $P_{n+1}$ if necessary, we can assume that for any simplexes $\Delta, \Delta'\subset P_{n+1}$ with 
 $\Delta\cap \Delta'\neq \emptyset$
 \begin{equation} \label{eq: diameter of the preimage of open star of P_{n+1}}
      \diam\left(\pi_{n+1}^{-1}\left(O_{P_{n+1}}(\Delta)\right)\cup \pi_{n+1}^{-1}\left(O_{P_{n+1}}(\Delta')\right), \mathbf{d}_{N_{n+1}}\right) 
        < \tau_{n+1}.
 \end{equation}
  Moreover by Lemma \ref{lemma: essential map} we can assume that 
  $\pi_{n+1}$ is essential.

We apply Lemma \ref{lemma: preparation on simplicial map} to 
$\pi_{n+1}:\mathcal{X}\to P_{n+1}$ and $q_n\circ T^\lambda: \mathcal{X}\to \overline{P_n*Q_n}$ $(\lambda\in [N_{n+1}])$
with $P=P_{n+1}$, $Q=\overline{P_n*Q_n}$, $N=N_{n+1}$, and $Q'=\overline{P_n} \text{ or } \overline{Q_n}$.
The assumption of Lemma \ref{lemma: preparation on simplicial map} is satisfied by 
the condition (iv) of the choice of $\tau_{n+1}$.
Then we get simplicial maps $h_\lambda:P_{n+1}\to \overline{P_n*Q_n}$ $(\lambda\in [N_{n+1}])$ satisfying 
the following three conditions.

\begin{enumerate}
  \renewcommand{\theenumi}{\Alph{enumi}}
   \item For every $\lambda\in [N_{n+1}]$ and $x\in \mathcal{X}$, the two points $h_\lambda(\pi_{n+1}(x))$ and 
           $q_n(T^\lambda x)$ belong to the same simplex of $\overline{P_n*Q_n}$.
   \item Let $\lambda\in [N_{n+1}]$ and $\Delta\subset P_{n+1}$ be a simplex.
           If $\pi_{n+1}^{-1}\left(O_{P_{n+1}}(\Delta)\right)\subset T^{-\lambda} q_n^{-1}\left(\overline{P_n}\right)$
           then $h_\lambda(\Delta)\subset \overline{P_n}$.
           Similarly, if $\pi_{n+1}^{-1}\left(O_{P_{n+1}}(\Delta)\right)\subset T^{-\lambda} q_n^{-1}\left(\overline{Q_n}\right)$
           then $h_\lambda(\Delta)\subset \overline{Q_n}$.
   \item Let $\lambda, \lambda'\in [N_{n+1}]$ and $\Delta\subset P_{n+1}$ be a simplex.
           If $q_n\circ T^\lambda = q_n\circ T^{\lambda'}$ on $\pi_{n+1}^{-1}\left(O_{P_{n+1}}(\Delta)\right)$then 
           $h_\lambda = h_{\lambda'}$ on $\Delta$.
\end{enumerate}

We define a linear map $\tilde{g}_{n+1,\lambda}:P_{n+1}\to B^\circ_1(V)$ for each $\lambda\in [N_{n+1}]$ as follows.
Let $\Delta\subset P_{n+1}$ be a simplex.
Since $\pi_{n+1}:\mathcal{X}\to P_{n+1}$ is essential, we can find a point 
$x\in \pi_{n+1}^{-1}\left(O_{P_{n+1}}(\Delta)\right)$.
Take $a,b\in \mathbb{Z}$ with $\lambda\in I_{\psi_n}(x,a)$, $b\equiv a (\mathrm{mod} N_n)$ and 
$\lambda\in b+[N_n]$. 
Set 
\[  \tilde{g}_{n+1,\lambda}(u) = g_{n,\lambda-b}*g'_n(h_\lambda(u)) \quad 
     (u\in \Delta). \]
(See Claim \ref{claim: tilde{g}_{n,lambda} is well-defined} below for the well-definedness.)
As in (\ref{eq: express f_n by q_n}) we have 
\[  f_n(T^\lambda x) =  g_{n,\lambda-b}*g'_n\left(q_n(T^\lambda x)\right). \]
From (\ref{eq: diameter of g_{n,lambda}*g'_n(Delta)}) and the condition (A) of the choice of $h_\lambda$ 
\begin{equation} \label{eq: f_n T^lambda and tilde{g}_{n+1,lambda} is close}
    \norm{\tilde{g}_{n+1,\lambda}(\pi_{n+1}(x)) - f_n(T^\lambda x)} < \min\left(\frac{\varepsilon_n}{8},\frac{\delta_n}{2}\right).
\end{equation}

\begin{claim} \label{claim: tilde{g}_{n,lambda} is well-defined}
The above construction of $\tilde{g}_{n+1,\lambda}$ is independent of the various choices.
Namely, let $\Delta'\subset P_{n+1}$ be another simplex with $\Delta\cap \Delta'\neq \emptyset$.
Let $x'\in \pi_{n+1}^{-1}\left(O_{P_{n+1}}(\Delta')\right)$ and
take $a', b'\in \mathbb{Z}$ such that $\lambda\in I_{\psi_n}(x',a')$, $b' \equiv a' (\mathrm{mod} N_n)$
and $\lambda\in b'+[N_n]$. Then for $u\in \Delta\cap \Delta'$
\[   g_{n,\lambda-b'}*g'_n(h_\lambda(u)) = g_{n,\lambda-b}*g'_n(h_\lambda(u)). \]
\end{claim}

\begin{proof}
First suppose $\dist(\lambda, \partial_{\psi_n}(x)) > 1/4$.
From (\ref{eq: diameter of the preimage of open star of P_{n+1}}), 
we have $\mathbf{d}(T^\lambda x, T^\lambda x') < \tau_{n+1}$.
From the condition (iii) of the choice of $\tau_{n+1}$, $\lambda$ is an interior point of $I_{\psi_n}(x',a)$.
So $a=a'$ and $b=b'$. Then 
\[   g_{n,\lambda-b'}*g'_n(h_\lambda(u)) = g_{n,\lambda-b}*g'_n(h_\lambda(u)). \]
Next suppose $\dist(\lambda,\partial_{\psi_n}(x)) \leq 1/4$.
Let $y\in \pi_{n+1}^{-1}\left(O_{P_{n+1}}(\Delta)\right)\cup \pi_{n+1}^{-1}\left(O_{P_{n+1}}(\Delta')\right)$
be an arbitrary point.
From  $\mathbf{d}(T^\lambda x, T^\lambda y) < \tau_{n+1}$
and the condition (iii) of the choice of $\tau_{n+1}$, we have 
$\dist(\lambda,\partial_{\psi_n}(y)) < 1/2$. Then 
\[  q_n(T^\lambda y) = \pi'_n(T^\lambda y) \in \overline{Q_n}. \]
Since  $y\in \pi_{n+1}^{-1}\left(O_{P_{n+1}}(\Delta)\right)\cup \pi_{n+1}^{-1}\left(O_{P_{n+1}}(\Delta')\right)$ is arbitrary,
\[   \pi_{n+1}^{-1}\left(O_{P_{n+1}}(\Delta)\right)\cup \pi_{n+1}^{-1}\left(O_{P_{n+1}}(\Delta')\right)
      \subset T^{-\lambda} q_n^{-1}\left(\overline{Q_n}\right). \]
From the condition (B) of the choice of $h_\lambda$,
\[  h_\lambda(\Delta) \cup h_\lambda(\Delta') \subset \overline{Q_n}. \]
Then 
\[    g_{n,\lambda-b'}*g'_n(h_\lambda(u)) = g'_n(h_\lambda(u)) = g_{n,\lambda-b}*g'_n(h_\lambda(u)). \]
\end{proof}

\begin{claim} \label{claim: covering number with respect to tilde{g}_{n+1}}
Set $\tilde{g}_{n+1} = (\tilde{g}_{n+1,0},\dots, \tilde{g}_{n+1,N_{n+1}-1}): P_{n+1}\to V^{N_{n+1}}$.
For $0<\varepsilon \leq \varepsilon_n$
\[  \#\left(\mathcal{X}, (\tilde{g}_{n+1}\circ \pi_{n+1})^*\norm{\cdot}_{N_{n+1}}, S_{N_{n+1}}\varphi, \varepsilon\right)
     < 2^{N_{n+1}} \left(\frac{1}{\varepsilon}\right)^{\left(D+\frac{4}{n}\right) N_{n+1}}. \]
\end{claim}

\begin{proof}
This is close to the proof of Claim \ref{claim: estimate of covering number in PS theorem 1}. But it is a bit more involved.
Let $x\in \mathcal{X}$.
We say that a discrete interval $J=[b,b+N_n)\cap \mathbb{Z}$ of length $N_n$ $(b\in \mathbb{Z})$ is \textbf{good for $x$}
if $J\subset [N_{n+1}]$ and there exists $a\in \mathbb{Z}$ satisfying $b\equiv a (\mathrm{mod} N_n)$ and 
$[b-1,b+N_n] \subset I_{\psi_n}(x,a)$.

Suppose $J=[b,b+N_n)\cap \mathbb{Z}$ is good for $x\in \mathcal{X}$.
Take a simplex $\Delta\subset P_{n+1}$ containing $\pi_{n+1}(x)$.
Let $y\in \pi_{n+1}^{-1}\left(O_{P_{n+1}}(\Delta)\right)$ be an arbitrary point.
From (\ref{eq: diameter of the preimage of open star of P_{n+1}}) we have 
$\mathbf{d}_{N_{n+1}}(x,y) < \tau_{n+1}$.
From the condition (iii) of the choice of $\tau_{n+1}$,
\[ \left[b-\frac{3}{4}, b+N_n-\frac{1}{4}\right] \subset I_{\psi_n}(y,a).  \]
Then for all $\lambda\in J$
\[  q_n(T^\lambda y) = q_n(T^b y) = \pi_n(T^b y) \in \overline{P_n}.  \]
From the conditions (B) and (C) of the choice of $h_\lambda$, 
\[  h_b(\Delta) \subset \overline{P_n}, \quad 
    h_\lambda = h_b \text{ on $\Delta$ for $\lambda\in J$}. \] 
Then 
\[  \left(\tilde{g}_{n+1,\lambda}(\pi_{n+1}(x))\right)_{\lambda\in J}
     = g_n\left(h_b(\pi_{n+1}(x))\right). \]
Moreover it follows from the condition (A) of the choice of $h_\lambda$ that 
$h_b(\pi_{n+1}(x))$ and $q_n(T^b x) = \pi_n(T^b x)$ belong to the same simplex of $\overline{P_n}$.

For $x\in \mathcal{X}$ we denote by $\mathcal{J}_x$ the union of the intervals $J \subset [N_{n+1}]$ good for $x$.
For a subset $\mathcal{J}\subset [N_{n+1}]$ we define $\mathcal{X}_\mathcal{J}$ as the set of 
$x\in \mathcal{X}$ with $\mathcal{J}_x = \mathcal{J}$.
The set $\mathcal{X}_{\mathcal{J}}$ may be empty.
If it is non-empty, then from Condition \ref{condition: dynamical PS theorem} (2)
\begin{equation} \label{eq: covering number of X_J in the (n+1)-th step}
   \begin{split}
    &  \#\left(\mathcal{X}_{\mathcal{J}}, (\tilde{g}_{n+1}\circ \pi_{n+1})^*\norm{\cdot}_{N_{n+1}}, S_{N_{n+1}}\varphi, \varepsilon\right) \\
   &  < \left\{\left(\frac{1}{\varepsilon}\right)^{\left(D+\frac{3}{n}\right)N_n}\right\}^{|\mathcal{J}|/N_n} \cdot
    \left\{\left(\frac{1}{\varepsilon}\right)^{C_n + \norm{\varphi}_\infty}\right\}^{\left|[N_{n+1}]\setminus \mathcal{J}\right|}. 
   \end{split}
\end{equation}   
We have $|\mathcal{J}|\leq N_{n+1}$ and
\begin{equation*}
   \begin{split}
      \left|[N_{n+1}]\setminus \mathcal{J}\right| & \leq 2N_n \left|\partial_{\psi_n}(x)\cap [0,N_{n+1}]\right| + 2N_n  \\
     & < \frac{N_{n+1}}{n(C_n+\norm{\varphi}_\infty)}  \quad \text{by the condition (b) of the choice of $N_{n+1}$}.
    \end{split}
\end{equation*}      
Then the above (\ref{eq: covering number of X_J in the (n+1)-th step}) is bounded by 
\[  \left(\frac{1}{\varepsilon}\right)^{\left(D+\frac{3}{n}\right)N_{n+1} + \frac{N_{n+1}}{n}}
     =  \left(\frac{1}{\varepsilon}\right)^{\left(D+\frac{4}{n}\right)N_{n+1}}   . \]
The number of the choices of $\mathcal{J}\subset [N_{n+1}]$ is bounded by $2^{N_{n+1}}$.
Thus 
\begin{equation*}
      \#\left(\mathcal{X}, (\tilde{g}_{n+1}\circ \pi_{n+1})^*\norm{\cdot}_{N_{n+1}}, S_{N_{n+1}}\varphi, \varepsilon\right) 
       <  2^{N_{n+1}}  \left(\frac{1}{\varepsilon}\right)^{\left(D+\frac{4}{n}\right)N_{n+1}}.
\end{equation*}   
\end{proof}

From Lemma \ref{lemma: preparations on linear maps} (1), we can take $0<\varepsilon_{n+1}<\varepsilon_n/2$ such that 
for any $0<\varepsilon \leq \varepsilon_{n+1}$ and any linear map $g:P_{n+1}\to V^{N_{n+1}}$ with 
$g(P_{n+1})\subset B_1^\circ(V)^{N_{n+1}}$
\[  \#\left(g(\Delta),\norm{\cdot}_{N_{n+1}},\varepsilon\right) 
    <  \frac{1}{(\text{Number of simplexes of $P_{n+1}$})} \left(\frac{1}{\varepsilon}\right)^{\dim \Delta + \frac{1}{n+1}}  \]
for all simplexes $\Delta\subset P_{n+1}$.    

Let $g:P_{n+1}\to B_1^\circ(V)^{N_{n+1}}$ be a linear map and let $\Delta\subset P_{n+1}$ be a simplex.
Since $\pi_{n+1}$ is essential, we can find a point $x\in \pi_{n+1}^{-1}\left(O_{P_{n+1}}(\Delta)\right)$
with $\dim_{\pi_{n+1}(x)} P_{n+1}\geq \dim \Delta$.
From (\ref{eq: diameter of the preimage of open star of P_{n+1}}) and the condition (ii) of the choice of 
$\tau_{n+1}$
\[  \sup_{\pi_{n+1}^{-1}\left(O_{P_{n+1}}(\Delta)\right)} S_{N_{n+1}}\varphi \leq S_{N_{n+1}}\varphi(x) + \frac{N_{n+1}}{n+1}. \]
Then for $0<\varepsilon \leq \varepsilon_{n+1}$
\begin{equation*}
   \begin{split}
   & \left(\frac{1}{\varepsilon}\right)^{\sup_{\pi_{n+1}^{-1}\left(O_{P_{n+1}}(\Delta)\right)} S_{N_{n+1}}\varphi}
   \#\left(g(\Delta), \norm{\cdot}_{N_{n+1}}, \varepsilon\right)  \\
   &<   \frac{1}{(\text{Number of simplexes of $P_{n+1}$})} 
    \left(\frac{1}{\varepsilon}\right)^{S_{N_{n+1}}\varphi(x)+\dim \Delta + \frac{N_{n+1}+1}{n+1}} \\
  & \leq    \frac{1}{(\text{Number of simplexes of $P_{n+1}$})} 
    \left(\frac{1}{\varepsilon}\right)^{S_{N_{n+1}}\varphi(x)+\dim_{\pi_{n+1}(x)} P_{n+1} + \frac{N_{n+1}+1}{n+1}} \\
  & \leq  \frac{1}{(\text{Number of simplexes of $P_{n+1}$})} 
    \left(\frac{1}{\varepsilon}\right)^{\left(D+\frac{1}{n+1}\right)N_{n+1} + \frac{N_{n+1}+1}{n+1}} 
     \quad   \text{by (\ref{eq: local dimension bound on P_{n+1}})}  \\
  & \leq   \frac{1}{(\text{Number of simplexes of $P_{n+1}$})} 
    \left(\frac{1}{\varepsilon}\right)^{\left(D+\frac{3}{n+1}\right)N_{n+1}}.
     \end{split}
\end{equation*}   
Hence for any $0<\varepsilon\leq \varepsilon_{n+1}$ and any linear map $g:P_{n+1}\to B_1^\circ(V)^{N_{n+1}}$
\begin{equation} \label{eq: sum of the covering numbers of g(Delta)}
  \sum_{\Delta\subset P_{n+1}} \left(\frac{1}{\varepsilon}\right)^{\sup_{\pi_{n+1}^{-1}\left(O_{P_{n+1}}(\Delta)\right)} S_{N_{n+1}}\varphi}
   \#\left(g(\Delta), \norm{\cdot}_{N_{n+1}}, \varepsilon\right)  
    <   \left(\frac{1}{\varepsilon}\right)^{\left(D+\frac{3}{n+1}\right)N_{n+1}}.
\end{equation}

We define $g'_{n+1}: Q_{n+1}\to B^\circ_1(V)$ and $g_{n+1,\lambda}:P_{n+1}\to B_1^\circ(V)$ $(\lambda\in [N_{n+1}])$
as small perturbations of $\tilde{g}'_{n+1}$ and $\tilde{g}_{n+1,\lambda}$ respectively.
By Lemma \ref{lemma: preparations on linear maps} (2), we can assume that 
they satisfy Condition \ref{condition: dynamical PS theorem} (1).
From (\ref{eq: discrepancy of tilde{g}_{n+1}}) and (\ref{eq: f_n T^lambda and tilde{g}_{n+1,lambda} is close}) we can assume that 
the perturbations are so small that they satisfy 
\begin{equation} \label{eq: discrepancy between g_{n+1} and f_n}
  \begin{split}
   \norm{g'_{n+1}(\pi'_{n+1}(x))-f_n(x)} & < \min\left(\frac{\varepsilon_n}{8}, \frac{\delta_n}{2}\right),  \\
   \norm{g_{n+1,\lambda}(\pi_{n+1}(x))-f_n(T^\lambda x)} & < \min\left(\frac{\varepsilon_n}{8}, \frac{\delta_n}{2}\right).
   \end{split}
\end{equation}
Moreover, from Claim \ref{claim: covering number with respect to tilde{g}_{n+1}}, we can assume that 
$g_{n+1} := (g_{n+1,0}, \dots, g_{n+1,N_{n+1}-1})$ satisfies
\[ \#\left(\mathcal{X}, (g_{n+1}\circ \pi_{n+1})^*\norm{\cdot}_{N_{n+1}}, S_{N_{n+1}}\varphi, \varepsilon\right)
     < 2^{N_{n+1}} \left(\frac{1}{\varepsilon}\right)^{\left(D+\frac{4}{n}\right) N_{n+1}}  \]
for all $\varepsilon_{n+1}\leq \varepsilon \leq \varepsilon_n$.
On the other hand, from (\ref{eq: sum of the covering numbers of g(Delta)}), for $0<\varepsilon \leq \varepsilon_{n+1}$
\[  \sum_{\Delta\subset P_{n+1}}  \left(\frac{1}{\varepsilon}\right)^{\sup_{\pi_{n+1}^{-1}\left(O_{P_{n+1}}(\Delta)\right)} S_{N_{n+1}}\varphi}
   \#\left(g_{n+1}(\Delta), \norm{\cdot}_{N_{n+1}}, \varepsilon\right)  
    <   \left(\frac{1}{\varepsilon}\right)^{\left(D+\frac{3}{n+1}\right)N_{n+1}}. \]
Thus we have established Condition \ref{condition: dynamical PS theorem} (2)  and (3) for $(n+1)$-th step.
(Recall that the condition (2) is stronger than (3) over the region $0<\varepsilon \leq \varepsilon_{n+1}$.)
From Lemma \ref{lemma: dynamical tiling}, we can take a continuous function 
$\psi_{n+1}:\mathcal{X}\to [0,1]$ satisfying Condition \ref{condition: dynamical PS theorem} (4).
The map $f_{n+1}$ defined by (\ref{eq: continuous map f_n in dynamical PS theorem}) is a $(1/n)$-embedding with respect to $\mathbf{d}$
by Condition \ref{condition: dynamical PS theorem} (1).
Since ``$(1/n)$-embedding'' is an open condition, we can take $\delta_{n+1}>0$ 
satisfying Condition \ref{condition: dynamical PS theorem} (5).
From (\ref{eq: discrepancy between g_{n+1} and f_n}) 
\[  \norm{f_{n+1}(x)-f_n(x)} <  \min\left(\frac{\varepsilon_n}{8}, \frac{\delta_n}{2}\right). \]
This shows Condition \ref{condition: dynamical PS theorem} (6).
We have finished the constructions of all data for the $(n+1)$-th step.
\end{proof}

\vspace{0.5cm}

\address{ Masaki Tsukamoto \endgraf
Department of Mathematics, Kyoto University, Kyoto 606-8502, Japan}

\textit{E-mail address}: \texttt{masaki.tsukamoto@gmail.com}

\end{document}